\newtheorem{thm}{Theorem}[section]
\newtheorem{cor}[thm]{Corollary}
\newtheorem{lem}[thm]{Lemma}
\newtheorem{exa}[thm]{Example}
\newtheorem{Remark}[thm]{Remark}
\newtheorem{defn}[thm]{Definition}
\newcommand{\scr}[1]{\mathscr #1}
\definecolor{wco}{rgb}{0.5,0.2,0.3}
\numberwithin{equation}{section}
\newcommand{\ua}{\uparrow}
\title{{\bf Quasi-invariant theorem on the Gaussian path space}\footnote{Supported in
		part by  NNSFC (12071085).} }
\author{
	{\bf Qinpin Chen, Jian Sun, Bo Wu}\\
	\footnotesize { School  of Mathematical Sciences, Fudan
		University, Shanghai 200433, China}
}
\date{\today}
\begin{document}
	\maketitle
	
	\def\R{\mathbb R}
	\def\EE{\mathbb E}
	\def\Z{\mathbb Z}
	\def\ff{\frac}
	\def\ss{\sqrt}
	\def\H{\mathbb H}
	\def\dd{\delta}
	\def\DD{\Delta}
	\def\vv{\varepsilon}
	\def\rr{\rho}
	\def\<{\langle}
	\def\>{\rangle}
	\def\GG{\Gamma}
	\def\gg{\gamma}
	\def\ll{\lambda}
	\def\LL{\Lambda}
	\def\nn{\nabla}
	\def\pp{\partial}
	\def\d{\text{\rm{d}}}
	\def\Id{\text{\rm{Id}}}
	\def\loc{\text{\rm{loc}}}
	\def\bb{\beta}
	\def\aa{\alpha}
	\def\D{\scr D}
	\def\E{\scr E}
	\def\si{\sigma}
	\def\ess{\text{\rm{ess}}}
	\def\beg{\begin}
	\def\beq{\beg}
	\def\F{\scr F}
	\def\Ric{\text{\rm{Ric}}}
	\def\Var{\text{\rm{Var}}}
	\def\Ent{\text{\rm{Ent}}}
	\def\Hess{\text{\rm{Hess}}}
	\def\B{\scr B}
	\def\e{\text{\rm{e}}}
	\def\ua{\underline a}
	\def\OO{\Omega}
	\def\b{\mathbf b}
	\def\oo{\omega}
	\def\tt{\tilde}
	\def\Ric{\text{\rm{Ric}}}
	\def\cut{\text{\rm{cut}}}
	\def\P{\mathbb P}
	\def\ifn{I_n(f^{\bigotimes n})}
	\def\fff{f(x_1)\dots f(x_n)}
	\def\ifm{I_m(g^{\bigotimes m})}
	\def\ee{\varepsilon}
	\def\C{\scr C}
	\def\M{\scr M}
	\def\ll{\lambda}
	\def\X{\scr X}
	\def\T{\scr T}
	\def\A{\mathbf A}
	\def\LL{\scr L}
	\def\LLL{\Lambda}
	\def\gap{\mathbf{gap}}
	\def\div{\text{\rm div}}
	\def\Lip{\text{\rm Lip}}
	\def\dist{\text{\rm dist}}
	\def\cut{\text{\rm cut}}
	\def\supp{\text{\rm supp}}
	\def\Cov{\text{\rm Cov}}
	\def\Dom{\text{\rm Dom}}
	\def\Cap{\text{\rm Cap}}
	\def\II{{\mathbb I}}
	\def\beq{\beg{equation}}
	\def\sect{\text{\rm sect}}
	\def\H{\mathbb H}
	\def\N{\mathbb N}
	\def\K{\scr K}
	\def\RR{\scr R}
	
	\begin{abstract}In this article, we will first introduce a class of Gaussian processes, and prove the quasi-invariant theorem with respect to the Gaussian Wiener measure, which is the law of the associated Gaussian process. In particular, it includes the case of the fractional Brownian motion.

	As applications, we will establish the integration by parts formula and Bismut-Elworthy-Li formula on the Gaussian path space, and by which we derive some logarithmic Sobolev inequalities. Moreover, we will also provides some applications in the field of financial mathematics.
	\end{abstract}

	\noindent ~~~~~~~~Keywords: $F$-Gaussian process; Quasi-invariant theorem; Integration by parts formula; Logarithmic Sobolev inequality\vskip 2cm
	
	\section{Introduction}
	Since Cameron and Martin \cite{CM1} established the classical quasi-invariant theorem with respect to the standard Wiener measure on $C([0,1];\mathbb{R}^n)$, stochastic analysis on the infinite dimensional space had been well developed and widely used in other directions. In the field of mathematical research, there are two very important aspects: One is the analysis on the Riemannian path and loop spaces, see \cite{CM1, D1, FM, F3, Hsu1} and references therein; the other is the analysis on the fractional path spaces, see \cite{K, MV, HDJ, N, JM} and references therein.

	In  mathematical finance area, people usually uses Brownian motion to model asset prices. This approach achived hugh successes in derivatives pricing which cultivated in 1997 Nobel Economcs prize to the works of Black, Scholes and Merton who developed a complete theory of option pricing formula. The theory is complete but still has a serious unsolved discrepancies between the constant volatility used in the model and realistic market data. In the Black-Scholes-Merton theory underlying price has a constant volatility which is independent of the option strikes but market option prices implies a volatility curve with respect to strikes. This phenomina called for more models afterwards among which stochastic volatility model played a key roles in applications, for example \cite{Heston}. One direction is to deviate from the traditional Brownian motion to fractional Brownian motion as a main driver for the volatility process \cite{AL}.

	Fractional Brownian motion (fBm for short) is a special Gaussian process and also been regarded as an appropriate tool of mathematical finance under the complex system science system, and it has important theoretical and practical significance as a generalization and deepening of the Wiener process that describes the price fluctuation behavior model of financial markets \cite{EV, MV}. \iffalse In \cite{F3} and \cite{N}, the author introduced the Wiener space and Malliavin calculus on the standard Brownian motion, and were extended to fBm sense. Rogers L.C.G proved that the fBm is not a martingale except the Hurst index $H=1/2$ \cite{R}, hence the usual definition of Ito calculus failed to define a full stochastic calculus. In \cite{D}, Decreusefond and Laurent first defined the functionals of a fBm using the stochastic calculus of variations, see also \cite{AMN, BHOZ, SKM}. Holden, H etc. (2009) gave a proof of the Girsanov theorem for fBm \cite{HOUZ}, and Jin Ma etc. (2009) also proved the quasi-regulariance with respect to fBm \cite{JM}. About Bismut formula, Bismut and Jean-Michel first introduced that \cite{B} and Fan proved the correspond formula with respect to the stochastic differential equations for fractional Brownian m otion \cite{F}. Capitaine etc. proved the Clark-Ocone-Haussmann formula and the logarithmic Sobolev inequality \cite{CHL}. Fan established the logarithmic Sobolev inequality with respect to the $L^2$ metric instead of the usual Cameron-Martin metric \cite{F2}. However, most of these achievements were based on the fBm or even standard Brownian motion. \fi In order to more accurately characterize the complex financial behavior of financial market price fluctuations, it is necessary to introduce a more general Gaussian process.

	In this paper, we will first introduce a class of new $F$-Gaussian processes in Section $2$, and the integral for F-Gaussian process will be presented in Section $3$. In Section $4$ we will prove quasi-invariant theorem with respect to the associated Gaussian Wiener measure, and by which the integration by parts formula will established. In Section $5$, three class of Logarithmic Sobolev inequalities will be obtained. Moreover, we will prove Bismut's formula and Martingale for F-Gaussian process in Section $6$ and Section $7$ respectively. Finally, we will also provides some applications in the field of financial mathematics in Section $8$.

	\section{$F$-Gaussian process}
	
	Let $(\Omega,\F,\P)$ be a complete probability space and $\{B_t:t\ge0\}$ be the standard Brownian
	motion on $\R$. Let $F:\R\times\R\rightarrow\R$ is a continuous function of two variables.
	In the following, we will mainly consider the stochastic process defined by
	\begin{equation}\label{e2.1}
		B_t^F:=\int_0^tF(t,s)\d B_s, \quad t\ge0.
	\end{equation}
	To ensure that $B^F$ is a Gaussian process, we need to add some reasonable conditions for the function $F$:
	\begin{equation}\label{e2.2}
		F\in L^2(\R^2)~~~\text{ and}~~ \int_0^{t\wedge s}F(t,r)F(s,r)\d r\neq0,\quad \forall~t,s>0
	\end{equation}
	and for all $t\ge0$,
	\begin{equation}\label{e2.3}
		\varphi_t(\cdot):=F(t,\cdot)\in L^2(\R).
	\end{equation}
	Under the above conditions \eqref{e2.3} and \eqref{e2.4}, it is easy to show that $B^F$ is a centered Gaussian process with covariance $R_F(t,s):=\EE(B_t^FB_s^F)$ and
	\begin{equation}\label{e2.4}
		R_F(t,s)=\int_0^{t\wedge s}F(t,r)F(s,r)\d r.
	\end{equation}
	Here, we will call $B^F$ as the $F$-Gaussian process.
	
	In particular, when $F(t,s)\equiv1, (t,s)\in \R\times\R$, then $B_t^F$ is the standard Brownian motion on $\R$; when we choose $F(t,s)=K_H(t,s), (t,s)\in [0,\infty)\times[0,\infty)$, where $K_H(t,s)$ is the non-negative square integrable kernel defined on $[0,\infty)\times[0,\infty)$ given by
	\begin{equation}\label{e2.5}
		K_H(t,s)=\begin{cases}
			&c_Hs^{\frac{1}{2}-H}\int_s^t(u-s)^{H-\frac{3}{2}}u^{H-\frac{1}{2}}\d u, \quad \quad\quad \quad \quad\quad\quad \quad \quad \quad\quad\quad ~~ H\in(\frac{1}{2},1),\\
			&1_{[0,t]}(s), \quad \quad \quad\quad\quad\quad \quad \quad\quad\quad \quad \quad \quad\quad\quad \quad \quad \quad\quad\quad\quad \quad ~~  H=\frac{1}{2},\\
			&b_H\left(\left(\frac{t(t-s)}{s}\right)^{H-\frac{1}{2}}-\left(H-\frac{1}{2}\right)s^{\frac{1}{2}-H}\int_s^t(u-s)^{H-\frac{1}{2}}u^{H-\frac{3}{2}}\d u\right), \quad H\in(0,\frac{1}{2}),
		\end{cases}
	\end{equation}
	where $t>s$, and $c_H,b_H$ are independent with respect to $t,s$, $B^{K_H}$ is exactly the fractional Brownian motion with Hurst index $H$ in \cite{MV,BHOZ}.
	% which we usually denote as $B^H$.
	
	For each fixed $T>0$, let $\mu^T$ be the law of $B^F_{[0,T]}$,  which is a probability measure on the path space $W_T:=C([0,T];\mathbb{R})$. Usually, $\mu^T$ is also called the $F$-Gaussian Wiener measure.

	Throughout the article, we assume that the conditions \eqref{e2.3} and \eqref{e2.4} hold. In fact, they are also the necessary conditions for $B^F$ to be a Gaussian process.

	\section{Integrals for $F$-Gaussian process}

	In the section, we will introduce stochastic integrals with respect to $F$-Gaussian process $B^F$ by using its Gaussianity. They may be expressed in terms of an integral with respect to the standard Brownian motion. Before going on, we first introduce two operators $K_F^*$ and $(K_F^*)^{-1}$, which are very critical for the quasi-invariant theorem in the next section.

	\subsection{$K_F^*$ and $(K_F^*)^{-1}$ operators}
	
	For any fixed $T>0$, we define by the classical Cameron-Martin space for the standard Brownian motion $B_t$:
	\begin{equation}\label{e3.1}
		\mathbb{H}=\left\{h\in C^{1}([0,T];\R): h(0)=0, \|h\|^2_{\mathbb{H}}:=\int_0^T|h'(s)|^2\d s<\infty\right\},
	\end{equation}
	which is a separable Hilbert space under the inner product
	$$\<h,g\>_\H:=\<h',g'\>_{L^2([0,T])}=\int_0^Th'(s)g'(s)\d s,\quad h,g\in\H.$$
	
	For a fixed function $F\in L^2(\R^2)$ satisfying with  \eqref{e2.3} and \eqref{e2.4}, we will introduce the Cameron-Martin space for the $F$-Gaussian process $B_t^F$. To do that, we need to define an operator $K_F$ on $L^2([0,T];\R)$ given by
	\begin{equation}\label{e3.2}
		K_Fh(t):=\int_0^tF(t,s)h(s)\d s,\quad h\in L^2([0,\infty)).
	\end{equation}
	Let
	\begin{equation}\label{e3.3}
		\H^F:=\left\{h^F:=K_F\dot{h}\Big| h\in \H\right\}.
	\end{equation}
	Then $\H^F$ is a inner product space under the following inner product:
	\begin{equation}\label{e3.4}
		\<h^F,g^F\>_{\H^F}=\int_0^Th'(s)g'(s)\d s
	\end{equation}
	for any $h^F, g^F\in \H^F$ with the forms $h^F:=K_F\dot{h}, ~g^F:=K_F\dot{g}, ~h,g\in\H$. Moreover, we may easily obtain
	$$\<K_F\dot{h},K_F\dot{g}\>_{\H^F}:=\<h,g\>_{\H}, \quad \forall h,g\in \H.$$
	Since
	$$\|K_Fh\|_{L^2}\le\|F\|_{L^2(\R^2)}\|h\|_{L^2},$$
	$K_F$ is a bounded linear operator from $L^2([0,T])$ to $\H^F$. Thus $\H^F$ is also a Hilbert space under the inner product $\<h^F,g^F\>_{\H^F}$.
	
	Let $K_F^{*}$ be the adjoint of $K_F$ in the following sense:
	$$\int_0^T(K_F^{*}g)(s)h(s)\d s=\int_0^Tg(s)\frac{\d(K_Fh)(s)}{\d s},\quad g,h\in L^2([0,T]; \mathbb{R}),$$
	where $g,h\in L^2([0,T])$. By exchanging the order of the double integral, we immediately obtain that
	\begin{equation}\label{e3.5}
		(K_F^{*}g)(s)=F(s,s)g(s)+\int_s^T\frac{\partial F(t,s)}{\partial t}g(t)\d t.
	\end{equation}
	Let $\K_F^{*}$ be the adjoint of $K_F$ in the $L^2$ sense:
	$$\int_0^T(\K_F^{*}g)(t)h(t)\d t=\int_0^Tg(t)K_Fh(t)\d t,\quad g,h\in L^2([0,T]; \mathbb{R}),$$
	where $g,h\in L^2([0,T])$. Moreover, we get easily
	\begin{equation}\label{e3.6}
		\K_F^{*}g(t)=\int_t^TF(s,t)g(s)\d s.
	\end{equation}
	
	\begin{Remark}\label{rem3.1}
		Assume that $F\in L^2([0,T]^2;\mathbb{R})$, then the following hold
		
		$(1)$  $K_F$ and $\K_F^*:L^2([0,T]; \mathbb{R})\rightarrow \mathbb{H}^F$ are bounded linear operators;
		
		%$(2)$  $\K_F^*:L^2([0,T]; \mathbb{R})\rightarrow \mathbb{H}^F$ is a bounded linear operator;
		
		$(2)$  Further assume that $F\in C^1([0,T];\mathbb{R})$ and $F(s,s)=0, \forall s\in [0,T]$, then $K_F^*:L^2([0,T]; \mathbb{R})\rightarrow\mathbb{H}^F$ is a bounded linear operator, and $\K_F^*=K_F^*\circ\K_{1_{\{s\le t\}}}^*.$
	\end{Remark}

	By the definition of $B^F$, we may take actually the $F$-Gaussian process as the functional of the standard Brownian motion, and combining this with the quasi-invariant property of the Wiener measure, we will derive the quasi-invariant theorem with respect to the $F$-Gaussian Wiener measure. To obtain that, we firstly establish the essential connection between the standard Brownian motion and $F$-Gaussian process. Thus, in addition to the above assumption of $F$ that $B^F$ is a Gaussian process, we need to further assume that:
	\begin{enumerate}
		\item[{\bf(A1)}] For a.s.~$t\in L^2[0,T], F_1(t,s):=\frac{\partial F(t,s)}{\partial t}\in L^2([0,T]^2; \mathbb{R})$.
		\item[{\bf(A2)}] There exists $\Phi\in L^2([0,T]^2; \mathbb{R})\cap C^1((0,T)^2; \mathbb{R})$ such that for any $0\le r\le t\le T$,
		\begin{equation*}
			\xi(r):=\int_r^t\Phi(t,s)F(s,r)\d s~~\text{and}~~\xi,\frac{1}{\xi}\in L^2([0,T]; \mathbb{R}).
		\end{equation*}
		\iffalse	
		\item[{\bf(A3)}] There exists $\Psi\in L^2([0,T]^2; \mathbb{R})\cap C^1((0,T)^2)$ such that for any $0\le t\le r\le T$,
		\begin{equation*}
			\eta(r):=\int_t^r\Psi(s,t)\frac{\partial F(r,s)}{\partial r}\d s~~\text{and}~~\eta,\frac{1}{\eta}\in L^2([0,T]; \mathbb{R}).
		\end{equation*}
		\fi
		\item[{\bf (A3)}] Assume  $F\in C([0,T]^2;\mathbb{R})$ and there exists $\Theta\in L^2([0,T]^2; \mathbb{R})$ such that for any $0\le t\le r\le T$,
		\begin{equation*}
			\zeta(r):=\Theta(r,t)F(r,r)+\int_t^r\Theta(s,t)\frac{\partial F(r,s)}{\partial r}\d s~~\text{and}~~\zeta,\frac{1}{\zeta}\in L^2([0,T]; \mathbb{R}).
		\end{equation*}
	\end{enumerate}
	
	In the following Theorem \ref{thm3.2}, we will present representations of inverse operators $K_F^{-1}$ and $(K_F^*)^{-1}$ under above certain conditions. This is the crucial point for the proof of the quasi-invariant theorem in the next section.
	
	\begin{thm}\label{thm3.2}
		Suppose {\bf(A1)} holds.
		
		$(1)$ If $F$ satisfies condition {\bf(A2)}, then $K_F$ is invertible and
		\begin{equation}\label{e3.7}
			K_F^{-1}g(t):=\frac{1}{\xi(t)}\frac{\d}{\d t}K_{\Phi}g(t)=\frac{1}{\xi(t)}\Big[\Phi(t,t)g(t)+\int^t_0\frac{\d\Phi(t,s)}{\d t}g(s)\d s\Big], \quad \forall~g\in\H^F.
		\end{equation}
		
		$(2)$ If $F$ satisfies condition {\bf(A3)}, then $K_F^*$ is invertible and
		\begin{equation}\label{e3.8}
			(K_F^*)^{-1}g(t):=-\frac{1}{\zeta(t)}\frac{\d}{\d t}\K_{\Theta}^*g(t)=\frac{1}{\zeta(t)}\Big[\Theta(t,t)g(t)-\int^T_t\frac{\d\Theta(s,t)}{\d t}g(s)\d s\Big], \quad \forall~g\in\H^F.
		\end{equation}
	\end{thm}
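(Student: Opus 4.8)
The plan is to prove both parts by the same mechanism: compose the claimed inverse operator (taken before the final $t$-differentiation) with the operator to be inverted, simplify the double integral by Fubini, and then differentiate. For part $(1)$, I would fix an arbitrary $h\in L^2([0,T])$, set $g=K_Fh\in\H^F$, and compute $K_\Phi g$ explicitly. Writing $K_\Phi g(t)=\int_0^t\Phi(t,s)\Big(\int_0^sF(s,r)h(r)\,\d r\Big)\d s$ and exchanging the order of integration over the triangle $\{0\le r\le s\le t\}$ gives $K_\Phi g(t)=\int_0^t\Big(\int_r^t\Phi(t,s)F(s,r)\,\d s\Big)h(r)\,\d r=\int_0^t\xi(r)h(r)\,\d r$, where the inner integral is exactly the quantity $\xi(r)$ appearing in {\bf(A2)}. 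The decisive use of {\bf(A2)} is that this inner integral is independent of the upper limit $t$, so that it is a genuine function of $r$ alone; granting this, the fundamental theorem of calculus yields $\frac{\d}{\d t}K_\Phi g(t)=\xi(t)h(t)$, and dividing by $\xi(t)$ recovers $h$. This shows the right-hand side of \eqref{e3.7} is a left inverse of $K_F$, and the explicit second expression follows from the Leibniz rule applied to $t\mapsto\int_0^t\Phi(t,s)g(s)\,\d s$.

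For part $(2)$ I would run the same scheme with the adjoints. Fix $g$, put $\phi=K_F^*g$, and using \eqref{e3.5} and \eqref{e3.6} form $\K_\Theta^*\phi(t)=\int_t^T\Theta(s,t)\phi(s)\,\d s$. Substituting the expression for $\phi$ and exchanging the order of integration over $\{t\le s\le r\le T\}$ produces $\K_\Theta^*(K_F^*g)(t)=\int_t^T\Big(\Theta(r,t)F(r,r)+\int_t^r\Theta(s,t)\frac{\partial F(r,s)}{\partial r}\,\d s\Big)g(r)\,\d r=\int_t^T\zeta(r)g(r)\,\d r$, with $\zeta$ the combination in {\bf(A3)}; again the point is that this combination is independent of $t$. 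Differentiating in $t$ gives $\frac{\d}{\d t}\K_\Theta^*(K_F^*g)(t)=-\zeta(t)g(t)$, so that $-\frac1{\zeta(t)}\frac{\d}{\d t}\K_\Theta^*(K_F^*g)(t)=g(t)$, which is the left-inverse identity, and the explicit form in \eqref{e3.8} comes from Leibniz differentiation of $\K_\Theta^*g$.

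It then remains to promote the left-inverse identities to genuine invertibility and to check the formulas are well defined. Since $\H^F$ is by definition the range of $K_F$, the identity $K_F^{-1}K_F=\Id$ forces $K_F$ to be injective, hence bijective onto $\H^F$, and a left inverse of a bijection is automatically two-sided; the same argument applies to $K_F^*$. Well-definedness is where the integrability hypotheses enter: the conditions $\xi,\frac1\xi\in L^2$ and $\zeta,\frac1\zeta\in L^2$, together with $\Phi,\Theta\in L^2([0,T]^2)$ and {\bf(A1)} (which makes $\frac{\partial F}{\partial t}\in L^2$ and legitimizes \eqref{e3.5}), guarantee that every operator above maps into $L^2$ and that division by $\xi$ or $\zeta$ preserves $L^2$-membership.

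The hard part will be the two analytic justifications rather than the formal algebra. First, each Fubini interchange must be justified from the stated $L^2$ bounds on $F,\Phi,\Theta$ and $F_1$; second, and more seriously, the differentiation under the integral sign and the fundamental-theorem step demand enough regularity, which is precisely why {\bf(A2)} imposes $\Phi\in C^1((0,T)^2)$ and {\bf(A3)} imposes $F\in C([0,T]^2)$. I expect the genuine crux to be confirming that the ``$t$-independence'' of $\xi$ and $\zeta$ is really what {\bf(A2)} and {\bf(A3)} encode, since the collapse of $K_\Phi(K_Fh)$ and $\K_\Theta^*(K_F^*g)$ to single integrals — and hence the whole argument — depends entirely on it.
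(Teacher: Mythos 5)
Your proposal is correct and follows essentially the same route as the paper: Fubini over the triangle to collapse $K_\Phi(K_Fh)$ to $\int_0^t\xi(r)h(r)\,\d r$ (and $\K_\Theta^*(K_F^*h)$ to $\int_t^T\zeta(r)h(r)\,\d r$), differentiation in the $L^2$-sense using the continuity of $\xi$ and $\zeta$, and then the observation that a left inverse on the range $\H^F=K_F(L^2)$ is automatically two-sided. You correctly isolate the real content of {\bf(A2)} and {\bf(A3)} — the $t$-independence of the collapsed inner integrals — which is exactly what the paper's computation relies on.
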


	\begin{Remark}$(1)$ Usually, all derivatives $\frac{\d}{\d t}g$ in the above theorem \ref{thm3.2} are in the $L^2$-sense.

	%$(2)$ For the case of fBm $B_{K_H}$, we may know that $K_F^{-1}, (K_F^*)^{-1}$ are the same as that in \cite{BHOZ} and \cite{D}.

	$(2)$  When $F=K_H$, i.e. $B^F$ is the fBm, $K_F^{-1}$ and $(K_F^*)^{-1}$ were introduced by  Decreusefond-\"Ust\"unel in \cite{DU}.

	\end{Remark}

	{\bf Proof of Theorem \ref{thm3.2}}
		$(1)$ $(a)$ Suppose that $h\in L^2([0,T]; \mathbb{R})$, then by \eqref{e2.3} and \eqref{e3.2}, we know that
		$$g(t):=K_Fh\in\H^F.$$
		Now, it suffices to only show that
		$$K_F^{-1} K_Fh(t)=h(t), \quad h\in L^2([0,T]; \mathbb{R})~~\text{and}~~K_F K_F^{-1}g(t)=g(t), \quad g\in \H^F.$$
		Firstly, it is easy to obtain that
		$$K_F^{-1}K_Fh(t)=\frac{1}{\xi(t)}\frac{\d}{\d t}K_{\Phi} K_Fh=h(t).$$
		In fact, using \eqref{e3.2} again,
		$$\aligned  K_{\Phi}K_Fh(t)=&\int_0^t\Phi(t,s)\left(\int_0^sF(s,r)h(r)\d r\right)\d s\\
		=&\int_0^t\int_0^s\Phi(t,s)F(s,r)h(r)\d r\d s\\
		=&\int_0^t\int_r^t\Phi(t,s)F(s,r)h(r)\d s\d r\\
		=&\int_0^t\left(\int_r^t\Phi(t,s)F(s,r)\d s\right)h(r)\d r\\
		=&\int_0^t\xi(r)h(r)\d r,\endaligned$$
		where the last equality is due to {\bf(A2)}. Since $\xi$ is continuous, we obtain
		$$\frac{\d}{\d t}(K_{\Phi}K_Fh)(t)=\xi(t)h(t)---L^2-{sense},$$
		which is equivalent to
		$$h(t)=\frac{1}{\xi(t)}\frac{\d}{\d t}K_{\Phi}K_Fh(t).$$

		$(b)$ Next, we will prove that
		$$K_F K_F^{-1}g(t)=g(t), \quad g\in \H^F.$$
		By the definition of $\H^F$, we know that there exists a $h_1\in L^2([0,T]; \mathbb{R})$ such that $g(t)=K_Fh_1$. Then by (a), we have
		$$K_F K_F^{-1}g(t)=K_F K_F^{-1}K_Fh_1(t)=K_Fh_1(t)=g(t).$$
	
		$(2)$ Proof of $(2)$ is similar to that of $(1)$, for the convenience of the reader, in the following we will give the detailed proof. We need to only show that
		$$\begin{cases}
			&(K_F^*)^{-1} K_F^*h(t)=h(t), \quad h\in L^2([0,T]; \mathbb{R}),\\
			&K_F^* (K_F^*)^{-1}g(t)=g(t), \quad g\in K_F^*(L^2([0,T]; \mathbb{R})).
		\end{cases}$$
		By \eqref{e3.5} and {\bf(A3)}, we have
		$$\aligned \K^T_{\Theta}K_F^*h(t)=&\int_t^T\Theta(s,t)\left(F(s,s)h(s)+\int_s^T\frac{\partial F(r,s)}{\partial r}h(r)\d r\right)\d s\\
		=&\int_t^T\Theta(r,t)F(r,r)h(r)\d r+\int_t^T\int_s^T\Theta(s,t)\frac{\partial F(r,s)}{\partial r}h(r)\d r\d s\\
		=&\int_t^T\Theta(r,t)F(r,r)h(r)\d r+\int_t^T\int_t^r\Theta(s,t)\frac{\partial F(r,s)}{\partial r}h(r)\d s\d r\\
		=&\int_t^T\left(\Theta(r,t)F(r,r)+\int_t^r\Theta(s,t)\frac{\partial F(r,s)}{\partial r}\d s\right)h(r)\d r\\
		=&\int_t^T\zeta(r)h(r)\d r.
		\endaligned$$
		Since $\zeta$ is continuous, we have
		$$\frac{\d }{\d t}\K^T_{\Theta}K_F^*h(t)=-\zeta(t)h(t)---L^2  ~{sense}.$$
		Thus, %for each $h\in L^2([0,T]; \mathbb{R})$,
		$$(K_F^*)^{-1} K_F^*h(t)=-\frac{1}{\zeta(t)}\frac{\d}{\d t}\K^T_{\Theta}K_F^*h(t)=h(t).$$
		For any $h\in L^2([0,T]; \mathbb{R})$, let $g=K_F^*h$. By \eqref{e3.5} and  ${\bf (A3)}$, we know that
		$$g(t)=K_F^*h(t)=F(t,t)h(t)+\int_t^T\frac{\partial F(s,t)}{\partial s}h(s)\d s\in L^2([0,T]; \mathbb{R}).$$
		Then by using the above equality, we obtain
		$$ K_F^*(K_F^*)^{-1}g(t)= K_F^*(K_F^*)^{-1}K_F^* h(t)=K_F^*h(t)=g(t).~~~~~~~~~~~~~~~~~~~~~~~~~~~~~\square$$

	In theorem \ref{thm3.2}, we only show that $K_F,K_F^*$ is invertible on the subspace $\H^F$ of $L^2([0,T]; \mathbb{R})$. However, to introduce the integrals with respect to $F$-Gaussian process and the quasi-invariant theorem in the next section, it requires that $K_F^*$ is invertible on the whole $L^2([0,T]; \mathbb{R})$. To do that, we need to add the following stronger condition {\bf(A4)}:

	\begin{enumerate}
		\item[{\bf(A4)}] Assume $F\in C([0,T]^2;\mathbb{R})$ and there exists functions $\Theta\in L^2([0,T]^2;\R)\cap C^1((0,T)^2; \mathbb{R})$ and $0<\zeta_1,\zeta_2\in L^2([0,T]; \mathbb{R})$ with $\frac{1}{\zeta_1},\frac{1}{\zeta_2}\in L^2([0,T]; \mathbb{R})$ such that for any $0\le t\le r\le T$,
		$$\left\{\begin{array}{l}
			\frac{\Theta(r,t)}{\zeta_1(r)}F(r,r)+\int_t^r\frac{\Theta(s,t)}{\zeta_1(s)}\frac{\partial F(r,s)}{\partial r}\d s=\zeta_2(r),\\
			\frac{\Theta(r,t)}{\zeta_2(t)}F(t,t)+\int_t^r\frac{\partial F(s,t)}{\partial s}\frac{\Theta(r,s)}{\zeta_2(s)}\d s=\zeta_1(t),\\
			\frac{\partial\Theta(r,t)}{\partial r}+\frac{\partial\Theta(r,t)}{\partial t}\equiv0.
		\end{array}\right.$$
	\end{enumerate}

	\begin{Remark}\label{rem3.3}
		In the condition {\bf(A4)}, if $\frac{\Theta(s,t)}{\zeta_1(s)}$ and $\zeta_2$ are replaced by $\Theta(s,t)$ and $\zeta$ respectively, then the first equation in the condition {\bf(A4)} implies condition {\bf(A3)}.
	\end{Remark}
	
	\begin{thm}\label{thm3.5}
		Suppose {\bf(A1)} and  {\bf(A4)} hold. Then  we have
		\begin{equation}\label{e3.9}
			\left(K_F^*\right)^{-1}g(t)=-\frac{1}{\zeta_2(t)}\frac{\d}{\d t}\int_t^T\frac{\Theta(s,t)}{\zeta_1(s)}g(s)\d s, \quad g\in L^2([0,T];\mathbb{R}),
		\end{equation}
		where the derivative $\frac{\d}{\d t}$ is in the $L^2$-sense.
	\end{thm}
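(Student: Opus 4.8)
The plan is to prove that the operator $M$ given by the right-hand side of \eqref{e3.9} is a genuine two-sided inverse of $K_F^*$ on the \emph{whole} space $L^2([0,T];\R)$, that is, to verify both $MK_F^*h=h$ and $K_F^*Mg=g$ for all $h,g\in L^2([0,T];\R)$. Carrying out the $t$-differentiation of the inner integral in \eqref{e3.9} once puts $M$ into the explicit ``multiplication plus Volterra'' form
$$Mg(t)=\frac{1}{\zeta_2(t)}\Big[\frac{\Theta(t,t)}{\zeta_1(t)}g(t)-\int_t^T\frac{1}{\zeta_1(s)}\frac{\partial\Theta(s,t)}{\partial t}g(s)\d s\Big],$$
so that both $K_F^*$ (via \eqref{e3.5}) and $M$ are upper-triangular integral operators plus a diagonal. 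The integrability hypotheses on $\zeta_1,\zeta_2,1/\zeta_1,1/\zeta_2,\Theta$ in {\bf(A4)} together with {\bf(A1)} guarantee that these operators map $L^2([0,T];\R)$ to itself and that every interchange of integration order below is legitimate by Fubini's theorem.

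For the left identity $MK_F^*h=h$ I would argue exactly as in the proof of Theorem \ref{thm3.2}(2). Substituting \eqref{e3.5} into $\int_t^T\frac{\Theta(s,t)}{\zeta_1(s)}(K_F^*h)(s)\d s$ and swapping the order of integration in the resulting double integral converts the kernel into $\frac{\Theta(r,t)}{\zeta_1(r)}F(r,r)+\int_t^r\frac{\Theta(s,t)}{\zeta_1(s)}\frac{\partial F(r,s)}{\partial r}\d s$, which is precisely $\zeta_2(r)$ by the first equation of {\bf(A4)}. Hence $\int_t^T\frac{\Theta(s,t)}{\zeta_1(s)}(K_F^*h)(s)\d s=\int_t^T\zeta_2(r)h(r)\d r$, and applying $-\frac{1}{\zeta_2(t)}\frac{\d}{\d t}$ returns $h$.

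The right identity $K_F^*Mg=g$ is the heart of the matter and is where the second and third equations of {\bf(A4)} enter. I would insert the explicit form of $Mg$ into \eqref{e3.5}, use Fubini to reorganise $K_F^*Mg(t)$ as a diagonal term $c(t)g(t)$ plus an off-diagonal piece $\int_t^T C(u,t)g(u)\d u$, and then check $c(t)\equiv1$ and $C(u,t)\equiv0$. The diagonal identity is the second equation of {\bf(A4)} at $r=t$, namely $\frac{\Theta(t,t)}{\zeta_2(t)}F(t,t)=\zeta_1(t)$, which gives $c(t)=\frac{\Theta(t,t)F(t,t)}{\zeta_1(t)\zeta_2(t)}=1$. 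For the vanishing of $C(u,t)$ the key observation is that the second equation of {\bf(A4)} asserts that $P(u,t):=\frac{\Theta(u,t)}{\zeta_2(t)}F(t,t)+\int_t^u\frac{\partial F(s,t)}{\partial s}\frac{\Theta(u,s)}{\zeta_2(s)}\d s$ equals $\zeta_1(t)$ for every $u\ge t$, hence is independent of $u$; differentiating $P(u,t)\equiv\zeta_1(t)$ in $u$ (which produces a boundary term $\frac{\partial F(u,t)}{\partial u}\frac{\Theta(u,u)}{\zeta_2(u)}$ from the moving upper limit) and then replacing each $\partial_u\Theta$ by $-\partial_t\Theta$ via the third equation of {\bf(A4)} yields exactly the expression computed for $-\zeta_1(u)C(u,t)$, so $C(u,t)\equiv0$.

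I expect the main obstacle to be the bookkeeping in this last step: correctly differentiating the Volterra integral with the moving lower limit $t$ (which contributes its own boundary term), justifying the interchange of differentiation and integration in the $L^2$-sense under the sole regularity $\Theta\in C^1((0,T)^2)$ and $F\in C([0,T]^2)$, and matching these boundary terms so that the use of the third equation of {\bf(A4)} cancels the diagonal $\Theta(u,u)$-contribution against the remaining integral. Once $c\equiv1$ and $C\equiv0$ are established, $M$ is a two-sided inverse of $K_F^*$ on all of $L^2([0,T];\R)$, and the representation \eqref{e3.9} follows.
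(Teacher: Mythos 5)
Your proposal is correct, and its overall skeleton (a two-sided inverse on all of $L^2$; the left identity $MK_F^*h=h$ obtained by the Fubini argument of Theorem \ref{thm3.2} together with the first equation of {\bf(A4)}) matches the paper's, but your treatment of the decisive right identity $K_F^*Mg=g$ is genuinely different. The paper never computes the composed kernel: after rewriting the $t$-derivative via the third equation of {\bf(A4)}, it evaluates $K_F^*\left(K_F^*\right)^{-1}$ on the special functions $g=1_{[0,a]}\zeta_1$, for which everything collapses (for $t\le a$) to $\frac{\Theta(a,t)}{\zeta_2(t)}F(t,t)+\int_t^a\frac{\partial F(s,t)}{\partial s}\frac{\Theta(a,s)}{\zeta_2(s)}\d s$, i.e.\ precisely the second equation of {\bf(A4)} with $r=a$, and then passes to general $g\in L^2$ by linearity, the surjectivity of $h\mapsto h\zeta_1$, and boundedness of the operators. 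You instead show directly that the composition has diagonal coefficient $\frac{\Theta(t,t)F(t,t)}{\zeta_1(t)\zeta_2(t)}=1$ and vanishing off-diagonal kernel $C$ by differentiating the identity $P(r,t)\equiv\zeta_1(t)$ of {\bf(A4)} in $r$ and replacing $\frac{\partial\Theta}{\partial r}$ by $-\frac{\partial\Theta}{\partial t}$; I checked that this differentiation does reproduce $-\zeta_1(r)C(r,t)$, so your argument closes. The trade-off is exactly the one you flag: your route requires the boundary term $\frac{\partial F(r,t)}{\partial r}\frac{\Theta(r,r)}{\zeta_2(r)}$ from the moving upper limit to make pointwise sense, i.e.\ an extra differentiation of the hypothesis when {\bf(A1)} only gives $F_1\in L^2$, whereas the paper uses the {\bf(A4)} identity only pointwise in the parameter $a$ and pushes all the analysis into a routine density and dominated-convergence step. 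Both arguments consume the same three equations of {\bf(A4)} in the same roles, so yours can be viewed as the differentiated (infinitesimal) form of the paper's test-function argument.
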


	\begin{proof}
		It suffices to prove that
		\begin{equation}\label{e3.10}
			\left(K_F^*\right)^{-1}K_F^*g(t)=g(t), \quad K_F^*\left(K_F^*\right)^{-1}g(t)=g(t),\quad g\in L^2([0,T];\mathbb{R}).
		\end{equation}
		According to Theorem \ref{thm3.2}, we know that $\left(K_F^*\right)^{-1}K_F^*g(t)=g(t)$ holds for $g\in L^2([0,T];\mathbb{R})$, and $K_F^*\left(K_F^*\right)^{-1}g(t)=g(t)$ holds for $g\in\H^F$. Thus, we only need to show that
		\begin{equation}\label{e3.11}
			K_F^*\left(K_F^*\right)^{-1}g(t)=g(t), \quad \forall g\in L^2([0,T];\mathbb{R}).
		\end{equation}
		By {\bf(A4)}, we know that $\frac{\Theta(s,t)}{\zeta_1(s)}$ is continous with respect to $s\in [0,T]$. By repeating the above argument in the Theorem \ref{thm3.2}, we may get
		\begin{equation}\label{e3.12}
			\aligned
			\frac{\d}{\d t}\int_t^T\frac{\Theta(s,t)}{\zeta_1(s)}g(s)=&-\frac{\Theta(t,t)}{\zeta_1(t)}g(t)+\int_t^T\frac{\partial\Theta(s,t)}{\partial t}\frac{g(s)}{\zeta_1(s)}\d s\\
			=&-\frac{\Theta(T,t)}{\zeta_1(t)}g(t)+\int_t^T\left(\frac{\partial\Theta(s,t)}{\partial t}\frac{g(s)}{\zeta_1(s)}+\frac{\partial\Theta(s,t)}{\partial s}\frac{g(t)}{\zeta_1(t)}\right)\d s\\
			=&-\frac{\Theta(T,t)}{\zeta_1(t)}g(t)+\int_t^T\frac{\partial\Theta(s,t)}{\partial s}\left(\frac{g(t)}{\zeta_1(t)}-\frac{g(s)}{\zeta_1(s)}\right)\d s----L^2~\text{sense},
			\endaligned
		\end{equation}
		where the second equal is due to the third equation in $(A4)$. Combining this with \eqref{e3.9},
		\begin{equation}\label{e3.13}
			\aligned
			&K_F^*\left(K_F^*\right)^{-1}g(t)\\
			=&-\frac{F(t,t)}{\zeta_2(t)}\left(-\frac{\Theta(T,t)}{\zeta_1(t)}g(t)+\int_t^T\frac{\partial\Theta(s,t)}{\partial s}\left(\frac{g(t)}{\zeta_1(t)}-\frac{g(s)}{\zeta_1(s)}\right)\d s\right)\\
			+&\int_t^T\frac{\partial F(s,t)}{\partial s}\frac{1}{\zeta_2(s)}\left(\frac{\Theta(T,s)}{\zeta_1(s)}g(s)-\int_s^T\frac{\partial\Theta(r,s)}{\partial r}\left(\frac{g(s)}{\zeta_1(s)}-\frac{g(r)}{\zeta_1(r)}\right)\d r\right)\d s
			\endaligned
		\end{equation}

		Next, we will finish the proof by dicussing different case of $g$.

		(a) Firstly, we consider the case of $g(t)=1_{[0,a]}(t)\zeta_1(t)$ for some $a\in (0,T]$. From \eqref{e3.13}, we obtain
		$$\aligned
		K_F^*\left(K_F^*\right)^{-1}g(t)=\frac{\Theta(a,t)}{\zeta_2(t)}F(t,t)+\int_t^a\frac{\partial F(s,t)}{\partial s}\frac{\Theta(a,s)}{\zeta_2(s)}\d s=\zeta_1(t)=g(t), \quad t\leq a,
		\endaligned$$
		where the last second equality is due to the condition ${\bf (A4)}$. When $t>a$, we know that
		$$K_F^*\left(K_F^*\right)^{-1}g(t)=0.$$	
		(b) For general case of $g\in L^2([0,T];\mathbb{R})$. According the conditions $\eta_1,\frac{1}{\eta_1}\in L^2([0,T];\mathbb{R})$, we know that $\{h\eta_1:h\in L^2([0,T];\mathbb{R})\}= L^2([0,T];\mathbb{R})$, thus it only suffices to show that
		$$\aligned
		K_F^*\left(K_F^*\right)^{-1}g(t)=g(t), \quad \forall~g(t)=h(t)\eta_1(t), h\in L^2([0,T];\mathbb{R}).
		\endaligned$$
		Since $K_F^*\left(K_F^*\right)^{-1}$ is a linear operator, by the dominated convergence theorem and $(a)$, it is easy to get the above conclusion. Thus, we finish the proof of this theorem.		
	\end{proof}
	
	\begin{Remark}\label{rem3.6}
		Suppose $F\in C((0,T)^2;\mathbb{R})$ and $F(t,t)\equiv0, t\in [0,T]$, and $\frac{\partial^2F(t,s)}{\partial t^2}$ exists and belongs to $L^2([0,T];\mathbb{R})$, then $\forall h\in L^2([0,T];\mathbb{R})$,
		\begin{equation*}g(t)=K_Fh(t)=\int_0^tF(t,s)h(s)\d s\end{equation*}
		is secondly differentiable and
		$$\begin{cases}
			&g'(t)=\int_0^t\frac{\partial F(t,s)}{\partial t}h(s)\d s, \quad g'(0)=0;\\ &g''(t)=\frac{\partial F(\cdot,t)}{\partial t}(t)h(t)+\int_0^t\frac{\partial^2 F(t,s)}{\partial t^2}h(s)\d s.
		\end{cases}$$
	\end{Remark}
	
	\begin{cor}\label{cor3.7}
		Assume that
		\begin{equation}\label{e3.14}
			F(t,s):=\begin{cases}&\frac{1}{\Gamma(\alpha)}f_1(s)\int_s^t(u-s)^{\alpha-1}f_2(u)\d u, \quad t>s,\\
			&0,~~~~~~~~~~~~~~~~~~~~\quad\quad\quad\quad\quad\quad otherwise,\end{cases}
		\end{equation}
		where $0<\alpha<1$ and $f_1,f_2,\frac{1}{f_1},\frac{1}{f_2}\in L^2([0,T];\R)\cap C^1((0,T);\R)$. Then we have
		\begin{equation}\label{e3.15}
			\begin{cases}
				&K_F^{-1}g(t)=\frac{1}{\Gamma(1-\alpha)f_1(t)}\frac{\d}{\d t}\int_0^t(t-s)^{-\alpha}\frac{g'(s)}{f_2(s)}\d s,~~~~~~~~\quad g\in \H^F\\
				&\left(K_F^*\right)^{-1}g(t)=-\frac{1}{\Gamma(1-\alpha)f_2(t)}\frac{\d}{\d t}\int_t^T(u-t)^{-\alpha}\frac{g(u)}{f_1(u)}\d u,\quad g\in L^2([0,T];\mathbb{R}),
			\end{cases}
		\end{equation}				
		where the derivative $\frac{\d}{\d t}g(t)$ or $g'(t)$ is in $L^2$-sence.
	\end{cor}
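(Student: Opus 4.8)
The plan is to recognize that the special kernel \eqref{e3.14} factors both $K_F$ and $K_F^*$ through one‑sided Riemann--Liouville fractional integrals, so that the two formulas in \eqref{e3.15} are nothing but the corresponding fractional derivatives, inverted via the semigroup (Beta‑function) identity. First I would rewrite $K_F$. For $h\in L^2([0,T])$, inserting \eqref{e3.14} into \eqref{e3.2} and exchanging the order of integration over the triangle $\{0<s<u<t\}$ gives
$$K_Fh(t)=\ff{1}{\Gamma(\alpha)}\int_0^tf_2(u)\Big(\int_0^u(u-s)^{\alpha-1}f_1(s)h(s)\,\d s\Big)\d u=\int_0^tf_2(u)\,(I_{0+}^\alpha(f_1h))(u)\,\d u,$$
where $I_{0+}^\alpha$ is the left fractional integral of order $\alpha$. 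Thus $K_F=\mathcal I\circ M_{f_2}\circ I_{0+}^\alpha\circ M_{f_1}$, with $\mathcal I\phi(t)=\int_0^t\phi(u)\,\d u$ and $M_f$ denoting multiplication by $f$. Since $g=K_Fh\in\H^F$ is absolutely continuous with $g(0)=0$, differentiating yields $g'/f_2=I_{0+}^\alpha(f_1h)$, and applying the left fractional derivative $D_{0+}^\alpha\phi=\ff{1}{\Gamma(1-\alpha)}\ff{\d}{\d t}\int_0^t(t-s)^{-\alpha}\phi(s)\,\d s$ (the left inverse of $I_{0+}^\alpha$) gives $f_1h=D_{0+}^\alpha(g'/f_2)$, which is precisely the first line of \eqref{e3.15}.

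The second formula is handled symmetrically. From \eqref{e3.5}, using $F(s,s)=0$ and $\ff{\partial F(t,s)}{\partial t}=\ff{1}{\Gamma(\alpha)}f_1(s)(t-s)^{\alpha-1}f_2(t)$, one gets $K_F^*g(s)=f_1(s)\,(I_{T-}^\alpha(f_2g))(s)$ with the right fractional integral $I_{T-}^\alpha$, so $K_F^*=M_{f_1}\circ I_{T-}^\alpha\circ M_{f_2}$; inverting with the right fractional derivative $D_{T-}^\alpha\phi=-\ff{1}{\Gamma(1-\alpha)}\ff{\d}{\d s}\int_s^T(u-s)^{-\alpha}\phi(u)\,\d u$ produces the second line of \eqref{e3.15}. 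Equivalently, this second identity is exactly the special case of Theorem \ref{thm3.5} obtained by taking $\Theta(s,t)=(s-t)^{-\alpha}/\Gamma(1-\alpha)$, $\zeta_1=f_1$, $\zeta_2=f_2$: the three equations of {\bf(A4)} then reduce, after one exchange of integration order, to the single Beta‑integral identity
$$\int_t^r(s-t)^{-\alpha}(r-s)^{\alpha-1}\,\d s=\Gamma(1-\alpha)\Gamma(\alpha),\qquad 0\le t<r\le T,$$
together with the transport relation $\partial_r\Theta+\partial_t\Theta\equiv0$, both of which are immediate. I would also record in passing that {\bf(A1)} and the standing assumptions \eqref{e2.3}--\eqref{e2.4} hold for \eqref{e3.14}, using $f_i,\tfrac1{f_i}\in L^2\cap C^1$.

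The Fubini exchanges and the explicit differentiations are routine. The main obstacle is the rigorous justification of the inversions $D_{0+}^\alpha I_{0+}^\alpha=\Id$ and $D_{T-}^\alpha I_{T-}^\alpha=\Id$ in the $L^2$‑sense for the singular kernels $(t-s)^{-\alpha}$: one must check that the relevant convolutions are absolutely continuous so that $\ff{\d}{\d t}$ may be taken inside (in $L^2$), control the endpoint behaviour at $s=t$ and $s=T$, and verify that each intermediate function lies in the domain on which the one‑sided fractional derivative is a genuine left inverse --- all of which rests on the Beta identity above and on the integrability afforded by $f_i,\tfrac1{f_i}\in L^2([0,T])\cap C^1((0,T))$. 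Finally I would confirm the opposite compositions $K_FK_F^{-1}=\Id$ on $\H^F$ and $K_F^*(K_F^*)^{-1}=\Id$ on $L^2([0,T];\R)$ by running the factorizations in reverse, exactly as in the proofs of Theorems \ref{thm3.2} and \ref{thm3.5}.
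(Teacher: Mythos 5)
Your proposal is correct and follows essentially the same route as the paper: the paper's step $g'=K_{F_1}h$ with $F_1(t,s)=\frac{1}{\Gamma(\alpha)}(t-s)^{\alpha-1}f_1(s)f_2(t)$, inverted via Theorem \ref{thm3.2} with $\Phi_1(t,s)=(t-s)^{-\alpha}/f_2(s)$, is exactly your factorization $K_F=\mathcal I\circ M_{f_2}\circ I_{0+}^\alpha\circ M_{f_1}$ inverted by $D_{0+}^\alpha$, with the Beta integral $\int_0^1(1-s)^{-\alpha}s^{\alpha-1}\,\d s=\Gamma(\alpha)\Gamma(1-\alpha)$ as the key identity in both write-ups. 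For the adjoint, your reduction to Theorem \ref{thm3.5} with $\Theta(s,t)=(s-t)^{-\alpha}/\Gamma(1-\alpha)$, $\zeta_1=f_1$, $\zeta_2=f_2$ is precisely the paper's verification of {\bf(A4)}.
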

	
	\begin{proof}
		$(1)$ Define
		\begin{equation}\label{e3.16}
			F_1(t,s)=\begin{cases}&\frac{1}{\Gamma(\alpha)}(t-s)^{\alpha-1}f_1(s)f_2(t), \quad s\leq t,\\&0,\quad\quad\quad\quad\quad\quad\quad \quad ~~~~~~~ otherwise.\end{cases}
		\end{equation}
		Then $F_1$ satisfies the condition {\bf(A1)}.
		According the definition of $K_F$, for any $h\in L^2([0,T];\mathbb{R})$, we have
		\begin{equation}\label{e3.17}
			\aligned
			g(t):=K_Fh(t)=&\int_0^tF(t,s)h(s)\d s=\int_0^t\left(\int_s^tF_1(u,s)\d u\right)h(s)\d s\\
			=&\int_0^t\left(\int_0^uF_1(u,s)h(s)\d s\right)\d u=\int_0^tK_{F_1}h(u)\d u.
			\endaligned
		\end{equation}
		Let
		$$\Phi_1(t,s)=\frac{(t-s)^{-\alpha}}{f_2(s)},\quad t>s, ~s,~t\in [0,T].$$
		Then
		$$\aligned
		\int_r^t\Phi_1(t,s)F_1(s,r)\d s=&\frac{1}{\Gamma(\alpha)}\int_r^t\frac{(t-s)^{-\alpha}}{f_2(s)}(s-r)^{\alpha-1}f_1(r)f_2(s)\d s\\
		=&\Gamma(1-\alpha)f_1(r):=\xi(r)
		\endaligned$$
		where the last second equal is using the basic formula
		$$\int_0^1(1-s)^{-\alpha}s^{\alpha-1}\d s=\Gamma(\alpha)\Gamma(1-\alpha).$$
		So $F_1$ satisfies the condition {\bf(A2)}. By theorem \ref{thm3.2},
		$$K_{F_1}^{-1}g(t)=\frac{1}{\Gamma(1-\alpha)f_1(t)}\frac{\d}{\d t}\int_0^t(t-s)^{-\alpha}\frac{g(s)}{f_2(s)}\d s.$$
		Since $F$ is continuous, we know that $g$ is differentiable, by \eqref{e3.17} we have
		\begin{equation}\label{e3.18}
			g'(t)=K_{F_1}h(t).
		\end{equation}
		Therefore we have
		$$\aligned
		h(t)=K_{F_1}^{-1}g'(t)=\frac{1}{\Gamma(1-\alpha)f_1(t)}\frac{\d}{\d t}\int_0^t(t-s)^{-\alpha}\frac{g'(s)}{f_2(s)}\d s.
		\endaligned$$
		This finish the proof of the first equation in \eqref{e3.15}.
		
		$(2)$ Next, we will check that $F$ satisfies the condition {\bf(A4)}. According to the definition of $F$, we have
		\begin{equation}\label{e3.19}
			\frac{\partial F(t,s)}{\partial t}=\frac{1}{\Gamma(\alpha)}(t-s)^{\alpha-1}f_1(s)f_2(t).
		\end{equation}
		Let
		$$\begin{cases}&\Theta(t,s)=\frac{1}{\Gamma(1-\alpha)}(t-s)^{-\alpha},\quad s\leq t\\
		&\zeta_1(t)=f_1(t),\quad \quad \quad\quad\quad\quad  ~t\in [0,T]\\
		&\zeta_2(t)=f_2(t),\quad \quad \quad \quad\quad\quad  ~t\in [0,T].\end{cases}$$
		It is easy to check that $\Theta\in L^2([0,T]^2;\R)\cap C^1((0,T)^2)$. By using \eqref{e3.19}, we get
		$$\aligned \int_t^r\frac{\Theta(s,t)}{\zeta_1(s)}\frac{\partial F(r,s)}{\partial r}\d s=&\frac{1}{\Gamma(1-\alpha)\Gamma(\alpha)}\int_t^r(s-t)^{-\alpha}(r-s)^{\alpha-1}f_2(r)\d s=f_2(r)=\zeta_2(r),\\
		\int_t^r\frac{\partial F(s,t)}{\partial s}\frac{\Theta(r,s)}{\zeta_2(s)}\d s=&\frac{1}{\Gamma(1-\alpha)\Gamma(\alpha)}\int_t^r(s-t)^{\alpha-1}(r-s)^{-\alpha}f_1(t)\d s=f_1(t)=\zeta_1(t),\\
		\frac{\partial\Theta(r,t)}{\partial r}+\frac{\partial\Theta(r,t)}{\partial t}=&\frac{-\alpha}{\Gamma(1-\alpha)}\left((r-t)^{-\alpha-1}-(r-t)^{-\alpha-1}\right)=0.\endaligned$$		
		Notice that $F(s,s)\equiv0$ for $0\le s\le T$. Thus $F(t,s)$ satisfies condition {\bf(A4)}. Finally, by theorem \ref{thm3.5}, we get that for all $g\in L^2[0,T]$,
		$$\left(K_F^*\right)^{-1}g(t)=-\frac{1}{f_2(t)\Gamma(1-\alpha)}\frac{\d}{\d t}\int_t^T(u-t)^{-\alpha}\frac{g(u)}{f_1(u)}\d u.$$
	\end{proof}
	
	\begin{cor}\label{cor3.8}
		Assume that $F(t,s)=f(s)$ for some $f\in C([0,T]; \mathbb{R})$ with $\frac{1}{f}\in L^4([0,T];\mathbb{R})$.
	Then we have
		$$\aligned & K_F^{-1}g(t)=\frac{g'(t)}{f(t)}----L^2-sense,\quad\quad\quad g\in \H^F;\\
		&\left(K_F^*\right)^{-1}h(t)=\frac{h(t)}{f(t)},\quad\quad\quad\quad\quad\quad h\in L^2([0,T];\mathbb{R}).\endaligned$$
	\end{cor}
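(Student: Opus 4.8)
The plan is to exploit the fact that the kernel $F(t,s)=f(s)$ does not depend on its first variable, which collapses both operators to elementary ones: $K_F$ becomes a weighted antiderivative and $K_F^*$ becomes multiplication by $f$. Accordingly I would prove the two inversion formulas by direct computation rather than grinding through the general Theorems \ref{thm3.2} and \ref{thm3.5}, indicating only at the end how the hypotheses of those theorems are met so that the corollary is also formally a special case.

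For the first identity I start from the definition \eqref{e3.2}, which here reads $K_Fh(t)=\int_0^t f(s)h(s)\,\d s$ for $h\in L^2([0,T];\R)$. Thus every $g=K_Fh\in\H^F$ is absolutely continuous with $g(0)=0$ and, in the $L^2$-sense, $g'(t)=f(t)h(t)$. Since $f\in C([0,T];\R)$ is bounded, $g'=fh\in L^2$, and solving for $h$ gives $h(t)=g'(t)/f(t)$; because membership $g\in\H^F$ means precisely $g=K_F\dot h$ with $\dot h\in L^2$ by \eqref{e3.3}, the quotient $g'/f=\dot h$ indeed lies in $L^2$. This establishes $K_F^{-1}g=g'/f$ on $\H^F$.

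For the second identity I substitute $F(t,s)=f(s)$ into the representation \eqref{e3.5} of $K_F^*$. Since $F$ is independent of its first argument we have $\frac{\partial F(t,s)}{\partial t}\equiv0$, while $F(s,s)=f(s)$; hence the integral term drops out and $K_F^*g(t)=f(t)g(t)$, that is, $K_F^*$ is simply multiplication by $f$. Its inverse is multiplication by $1/f$, which yields $(K_F^*)^{-1}h=h/f$.

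The only genuine point requiring care -- and what I expect to be the main (if modest) obstacle -- is the integrability bookkeeping that guarantees these formulas land in the correct $L^2$ spaces, i.e. checking that $h/f$ belongs to $L^2$ for the $h$ under consideration. This is exactly where the hypothesis $\frac{1}{f}\in L^4([0,T];\R)$ enters, together with $f\in C([0,T];\R)\subset L^\infty$: the continuity of $f$ makes $K_F^*$ bounded on $L^2$, while the $L^4$-control of $1/f$ supplies the $L^2$-membership of the inverse images needed for the Cameron--Martin analysis (in particular it forces $f\neq0$ a.e. and $\tfrac1f\in L^1$). To present the result in the same format as Corollary \ref{cor3.7}, I would finally note that $F$ satisfies {\bf(A1)} trivially (as $\frac{\partial F}{\partial t}\equiv0$) and, when $f>0$, satisfies {\bf(A4)} with $\Theta\equiv1$ and positive weights $\zeta_1,\zeta_2$ obeying $\zeta_1\zeta_2=f$ (for instance $\zeta_1=\zeta_2=\sqrt{f}$, using $f,\tfrac1f\in L^1$), so that Theorems \ref{thm3.2} and \ref{thm3.5} reproduce the same two formulas as a consistency check.
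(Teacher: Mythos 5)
Your proposal is correct and, for the first formula, coincides with the paper's own argument: both differentiate $g=K_Fh$ to obtain $g'=fh$ in the $L^2$-sense and solve for $h=g'/f$, with $1/f\in L^4\subset L^2$ guaranteeing $f\neq 0$ a.e. For the second formula you argue directly from \eqref{e3.5} that $\partial F/\partial t\equiv 0$ collapses $K_F^*$ to multiplication by $f$, whose inverse is multiplication by $1/f$; the paper instead takes $\Theta\equiv 1$, $\zeta_1\equiv 1$, $\zeta_2=f$ in {\bf(A4)} and invokes Theorem \ref{thm3.5} (your alternative weights $\zeta_1=\zeta_2=\sqrt{f}$ also satisfy {\bf(A4)} and feed through Theorem \ref{thm3.5} to the same output). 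Your direct route is the more elementary of the two and makes transparent why the inverse acts on all of $L^2([0,T];\R)$ rather than only on the range $\H^F$, whereas the paper's route exhibits the corollary as a literal instance of the general machinery; both are valid, and neither resolves the shared, inherited vagueness about which $L^p$ space $h/f$ lands in for a general $h\in L^2$ (H\"older only gives $L^{4/3}$), which is where the hypothesis $1/f\in L^4$ is implicitly doing its work.
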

	
	\begin{proof}
		$(1)$ The condition $(A1)$ is obvious. For any $h\in L^2([0,T];\mathbb{R})$, we have
		$$g(t)=K_Fh(t)=\int_0^tf(s)h(s)\d s.$$
		Since $\frac{1}{f}\in L^2([0,T];\mathbb{R})$, we obtain
		$$h(t)=\frac{1}{f(t)}\frac{\d }{\d t}g\big(t\big)----L^2-sense.$$
		This implies
		$$K_F^{-1}K_Fh(t)=K_F^{-1}g(t)=\frac{1}{f(t)}\frac{\d }{\d t}g\big(t\big)=h(t).$$
		The next proof is the same to that of the proof $(b)$ for Theorem \ref{thm3.2}.
	
		$(2)$ Taking $\Theta(t,s)=1, \zeta_1\equiv1,  \zeta_2=f(t)$, then the condition ${\bf (A4)}$ holds. By Theorem \ref{thm3.5}, we finish the proof.
	\end{proof}
	
	\begin{exa}\label{ex3.9}
		$(1)$ In the Corollary \ref{cor3.7}, let
		\begin{equation}\label{e3.20}
			f_1(s)=s^{\frac{1}{2}-H},~~f_2(t)=t^{H-\frac{1}{2}},~~\alpha=H-\frac{1}{2},~~c=c_H\Gamma\Big(H-\frac{1}{2}\Big).
		\end{equation}
		Then $F(t,s)$ is just the kernal $K_H(t,s)$ for the fraction Brownian motion, and
		\begin{equation}\label{e3.21}
			\begin{cases}
				&K_{K_H}^{-1}g(t)=\left(c_H\Gamma(H-\frac{1}{2})\Gamma(\frac{3}{2}-H)\right)^{-1}t^{H-\frac{1}{2}}\frac{\d}{\d t}\int_0^t(t-s)^{\frac{1}{2}-H}s^{\frac{1}{2}-H}g'(s)\d s.\\
				&\left(K_{K_H}^*\right)^{-1}g(t)=-\left(c_H\Gamma(\frac{3}{2}-H)\Gamma(H-\frac{1}{2})\right)^{-1}t^{\frac{1}{2}-H}\frac{\d}{\d t}\int_t^T(u-t)^{\frac{1}{2}-H}u^{H-\frac{1}{2}}g(u)\d u.
			\end{cases}
		\end{equation}
	
		$(2)$ In the Corollary \ref{cor3.7},  let $f_1=f_2\equiv1$, then
		\begin{equation}\label{e3.22}
			F(t,s)=\frac{1}{\Gamma(\alpha)}\int_s^t(u-s)^{\alpha-1}\d u,\quad\frac{\partial F(t,s)}{\partial t}=\frac{1}{\Gamma(\alpha)}(t-s)^{\alpha-1}.
		\end{equation}
		And
		\begin{equation}\label{e3.23}
			\begin{cases}
				&K_F^{-1}g(t)=\frac{1}{\Gamma(1-\alpha)}\left(t^{-\alpha}g'(t)+\int_0^t\left((t-s)^{-\alpha}-t^{-\alpha}\right)g''(s)\d s\right).\\
				&\left(K_F^*\right)^{-1}g(t)=-\frac{1}{\Gamma(1-\alpha)}\left(-(T-t)^{-\alpha}g(t)+\int_t^T\left((u-t)^{-\alpha}-(T-t)^{-\alpha}\right)g'(u)\d u\right).
			\end{cases}
		\end{equation}
		
		$(3)$ In the Corollary \ref{cor3.8}, let $f(s)\equiv C$, where $C\neq0$, then
		\begin{equation}\label{e3.24}
			\begin{cases}
				&K_F^{-1}g(t)=C^{-1}g'(t).\\
				&\left(K_F^*\right)^{-1}g(t)=C^{-1}g(t).
			\end{cases}
		\end{equation}
	\end{exa}
		
	\begin{proof}
		For the functions $F$ of $(1)$ and $(2)$, it is easy to show that all conditions in Corollary \ref{cor3.8} hold. At the same time, for the function $F$ of $(3)$, we also prove easily that all conditions in Corollary \ref{cor3.8} hold. Thus, we obtain all results.
	\end{proof}

	\subsection{Integrals for $F$-Gaussian process}	
	
	In this subsection, we always assume that  {\bf(A1)} and  {\bf(A4)} hold.  Based on the bounded property of the opeator $(K_F^*)^{-1}$, in the following, we will introduce the integrals for $F$-Gaussian process $B_F$.

	For each fixed $T>0$, let
	\begin{equation}\label{e3.25}
		\scr{H}^F:=(K_F^*)^{-1}\big(L^2([0,T];\mathbb{R})\big).
	\end{equation}
	For each $n\in \mathbb{N}$, let $\T_n$ be the set of all partitions $\Delta_n$, which is a partition of the interval $[0,T]$ and satisfy with
	$$\begin{cases}
		&\Delta_n=\{0=t_0^{(n)}<t_1^{(n)}<\dots<t_{l(n)-1}^{(n)}<t_{l(n)}^{(n)}=T\},\\
		&|\Delta_n|=\max\{|t_{i+1}^{(n)}-t_i^{(n)}|:0\le i\le l(n)-1\},\\
		&\lim\limits_{n\rightarrow\infty}|\Delta_n|=0.
	\end{cases}$$
	For any $\psi\in{\scr H}^F$, there exists a function $h\in L^2([0,T];\mathbb{R})$ such that $\psi=(K_F^*)^{-1}h$. For any $\Delta_n\in \T_n$, define
	\begin{equation}\label{e3.26}
		c_i^{(n)}:=\frac{1}{t_{i+1}^{(n)}-t_i^{(n)}}\int_{t_i^{(n)}}^{t_{i+1}^{(n)}}(K_F^*)^{-1}h(s)\d s.
	\end{equation}
	According to conditions of assumptions, we know that $(K_F^*)^{-1}$ is a bounded operator. Thus, we get
	\begin{equation}\label{e3.27}
			\psi_n:=(K_F^*)^{-1}h_n~~ \xrightarrow{L^2}~~(K_F^*)^{-1}h,
	\end{equation}
	where
	$$h_n(s)=\begin{cases} &\frac{1}{t_{i+1}^{(n)}-t_i^{(n)}}\int_{t_i^{(n)}}^{t_{i+1}^{(n)}}h(s)\d s,\quad s\in (t_i^{(n)},t_{i+1}^{(n)}],\\&0,~~~~~~~~~~~~~~~~~~~~~~~~\quad otherwise .\end{cases}$$		
	For each $\Delta_n$, define
	\begin{equation}\label{e3.28}
		\Sigma_n: =\sum_{i=0}^{l(n)-1}c_i^{(n)}\left(B_{t_{i+1}^{(n)}}^F-B_{t_i^{(n)}}^F\right).
	\end{equation}
	
	\begin{lem}\label{lem3.10}
		Under assumptions of {\bf(A1)} and {\bf(A4)}, $\{\Sigma_n\}_{n\geq1}$ is a Cauchy sequence in $L^2(\P)$.
	\end{lem}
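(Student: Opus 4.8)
The plan is to realise each $\Sigma_n$ as an ordinary Wiener integral against the driving Brownian motion $B$ and then to transfer the Cauchy property from $L^2([0,T])$ to $L^2(\P)$ through the It\^o isometry. Write
$$\phi_n:=\sum_{i=0}^{l(n)-1}c_i^{(n)}\,1_{(t_i^{(n)},\,t_{i+1}^{(n)}]},$$
the step function taking the value $c_i^{(n)}$ on $(t_i^{(n)},t_{i+1}^{(n)}]$, so that $\Sigma_n$ is exactly the integral of $\phi_n$ against the increments of $B^F$. Since each increment $B^F_{t_{i+1}^{(n)}}-B^F_{t_i^{(n)}}$ is, by \eqref{e2.1}, a centred Gaussian variable given by a Wiener integral, $\Sigma_n$ is itself a Wiener integral $\Sigma_n=\int_0^T g_n(s)\,\d B_s$ for some $g_n\in L^2([0,T])$, and the It\^o isometry gives $\EE|\Sigma_n-\Sigma_m|^2=\|g_n-g_m\|_{L^2}^2$. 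Thus the whole problem reduces to identifying $g_n$ and showing that $\{g_n\}$ is Cauchy in $L^2([0,T])$.

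The key identity I would establish is $\Sigma_n=\int_0^T (K_F^*\phi_n)(s)\,\d B_s$, that is $g_n=K_F^*\phi_n$. To prove it I would expand each increment with \eqref{e2.1} as
$$B^F_{t_{i+1}^{(n)}}-B^F_{t_i^{(n)}}=\int_0^{t_i^{(n)}}\big(F(t_{i+1}^{(n)},s)-F(t_i^{(n)},s)\big)\,\d B_s+\int_{t_i^{(n)}}^{t_{i+1}^{(n)}}F(t_{i+1}^{(n)},s)\,\d B_s,$$
and then use {\bf(A1)} together with the continuity of $F$ to write $F(t_{i+1}^{(n)},s)-F(t_i^{(n)},s)=\int_{t_i^{(n)}}^{t_{i+1}^{(n)}}\partial_tF(t,s)\,\d t$ when $s<t_i^{(n)}$, and $F(t_{i+1}^{(n)},s)=F(s,s)+\int_s^{t_{i+1}^{(n)}}\partial_tF(t,s)\,\d t$ when $s\in(t_i^{(n)},t_{i+1}^{(n)})$. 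Collecting, for a fixed $s\in(t_j^{(n)},t_{j+1}^{(n)}]$, the coefficient of $\d B_s$ over all indices $i$, and using that $\phi_n$ is constant on each partition interval, this coefficient becomes $\phi_n(s)F(s,s)+\int_s^T\phi_n(t)\,\partial_tF(t,s)\,\d t$, which is precisely $(K_F^*\phi_n)(s)$ by \eqref{e3.5}. This bookkeeping — interchanging the finite sum with the inner $t$-integral and recognising the adjoint $K_F^*$ — is the step I expect to be the main obstacle.

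Granting the identity, the It\^o isometry yields $\EE|\Sigma_n-\Sigma_m|^2=\|K_F^*(\phi_n-\phi_m)\|_{L^2}^2$. Under {\bf(A1)} and the continuity of $F$, $K_F^*$ is a bounded operator on $L^2([0,T])$: by \eqref{e3.5} its first part is multiplication by the bounded function $F(s,s)$ and its second part is a Hilbert--Schmidt integral operator with kernel $\partial_tF\in L^2([0,T]^2)$. Hence $\|K_F^*(\phi_n-\phi_m)\|_{L^2}\le C\,\|\phi_n-\phi_m\|_{L^2}$, and it suffices to show that $\{\phi_n\}$ is Cauchy in $L^2([0,T])$.

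Finally, $c_i^{(n)}$ is the average of $\psi:=(K_F^*)^{-1}h$ over $(t_i^{(n)},t_{i+1}^{(n)}]$, and $\psi\in L^2([0,T])$ by Theorem \ref{thm3.5} under {\bf(A4)}; thus $\phi_n$ is the orthogonal projection of $\psi$ onto the piecewise-constant functions subordinate to $\Delta_n$. As $|\Delta_n|\to0$ these projections converge strongly to the identity on $L^2([0,T])$ (a standard fact: it is checked first for continuous $\psi$ via the modulus of continuity, then extended by density using that the projections are contractions), so $\phi_n\to\psi$ in $L^2([0,T])$. A convergent sequence is Cauchy, and the boundedness of $K_F^*$ then forces $\{K_F^*\phi_n\}$, hence $\{\Sigma_n\}$, to be Cauchy in $L^2(\P)$; note that in the limit $K_F^*\phi_n\to K_F^*\psi=h$, which is consistent with the intended definition of the integral. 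This proves the claim.
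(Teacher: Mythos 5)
Your proof is correct, and it reaches the same pivotal identity as the paper --- $\EE|\Sigma_n-\Sigma_m|^2=\|K_F^*(\phi_n-\phi_m)\|_{L^2([0,T];\R)}^2$ --- but by a genuinely different route on both sides of that identity. To get there, the paper computes $\EE(\Sigma_n\Sigma_m)$ from the covariance kernel $\phi(u,v)=\int_0^{u\wedge v}\partial_uF(u,r)\,\partial_vF(v,r)\,\d r$ and a Fubini argument; in collapsing $\int_r^s\partial_uF(u,r)\,\d u$ to $F(s,r)$ it silently discards the diagonal term $F(r,r)$, so its \eqref{e3.29} and the identification of the resulting expression with $K_F^*$ really presuppose $F(r,r)=0$. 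You instead represent $\Sigma_n=\int_0^TK_F^*\phi_n(s)\,\d B_s$ directly (the same identity as \eqref{e3.35}, where $K_F^*1_{[0,t]}(s)=F(t,s)1_{[0,t]}(s)$ holds with the diagonal term kept) and invoke the It\^o isometry, which is cleaner and needs no such tacit assumption. The finishes also differ: the paper concludes via $\|K_F^*(\psi_n-\psi_m)\|=\|h_n-h_m\|$, which conflates the step function $\psi_n=\sum_ic_i^{(n)}1_{(t_i^{(n)},t_{i+1}^{(n)}]}$ (the projection of $(K_F^*)^{-1}h$ onto piecewise constants) with $(K_F^*)^{-1}h_n$ from \eqref{e3.27} --- two different functions in general, since $(K_F^*)^{-1}$ does not commute with the projection --- whereas you prove $\phi_n\to(K_F^*)^{-1}h$ in $L^2$ by the standard projection/density argument and then push this through the $L^2$-boundedness of $K_F^*$ (bounded multiplication by $F(s,s)$ plus a Hilbert--Schmidt operator with kernel $\partial_tF\in L^2([0,T]^2)$, available under {\bf(A1)} and {\bf(A4)}). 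Your version therefore not only avoids the covariance bookkeeping but actually repairs the two soft spots in the paper's argument; the only price is that you must justify the Wiener-integral representation of $\Sigma_n$ separately, which your expansion of the increments does correctly and which the paper in any case reproves in Theorem \ref{thm3.12}.
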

	
	\begin{proof}
		By simple caculation, we obtain
		$$\aligned
		&\int_0^t\int_0^s\left(\int_0^{u\wedge v}\frac{\partial F(u,r)}{\partial u}\frac{\partial F(v,r)}{\partial v}\d r\right)\d u\d v\\
		=&\int_0^{t\wedge s}\left(\int_r^t\int_r^s\frac{\partial F(u,r)}{\partial u}\frac{\partial F(v,r)}{\partial v}\d u\d v\right)\d r\\
		=&\int_0^{t\wedge s}\left(\int_r^t\frac{\partial F(v,r)}{\partial v}F(s,r)\d v\right)\d r\\
		=&\int_0^{t\wedge s}F(s,r)F(t,r)\d r.
		\endaligned$$
		Combining this with \eqref{e2.5}, we obtain
		\begin{equation}\label{e3.29}
			\EE(B_t^FB_s^F)=\int_0^{t\wedge s}F(t,r)F(s,r)\d r=\int_0^t\int_0^s\left(\int_0^{u\wedge v}\frac{\partial F(u,r)}{\partial u}\frac{\partial F(v,r)}{\partial v}\d r\right)\d u\d v.
		\end{equation}
		Let		
		\begin{equation}\label{e3.30}
			\phi(u,v)=\int_0^{u\wedge v}\frac{\partial F(u,r)}{\partial u}\frac{\partial F(v,r)}{\partial v}\d r.
		\end{equation}
		Then,
		\begin{equation}\label{e3.31}
			\EE(B_t^FB_s^F)=\int_0^t\int_0^s\phi(u,v)\d u\d v,
		\end{equation}
		thus
		\begin{equation}\label{e3.32}
			\EE(B_{t_2}^F-B_{t_1}^F)(B_{s_2}^F-B_{s_1}^F)=\int_{t_1}^{t_2}\int_{s_1}^{s_2}\phi(u,v)\d u\d v.
		\end{equation}
		By \eqref{e3.28}, for any $m,n\in\N$, we have
		$$\aligned
		\EE(\Sigma_n\Sigma_m)=&\EE\left(\sum_{i=0}^{l(n)-1}c_{i}^{(n)}(B_{t_{i+1}^{(n)}}^F-B_{t_i^{(n)}}^F)\right)\left(\sum_{j=0}^{l(m)-1}c_{j}^{(m)}(B_{t_{j+1}^{(m)}}^F-B_{t_j^{(m)}}^F)\right)\\
		=&\sum_{i=0}^{l(n)-1}\sum_{j=0}^{l(m)-1}c_{i}^{(n)}c_{j}^{(m)}\EE(B_{t_{i+1}^{(n)}}^F-B_{t_i^{(n)}}^F)(B_{t_{j+1}^{(m)}}^F-B_{t_j^{(m)}}^F)\\
		=&\sum_{i=0}^{l(n)-1}\sum_{j=0}^{l(m)-1}c_{i}^{(n)}c_{j}^{(m)}\int_{t_i^{(n)}}^{t_{i+1}^{(n)}}\int_{t_j^{(m)}}^{t_{j+1}^{(m)}}\phi(u,v)\d u\d v\\
		=&\int_0^T\int_0^T\left(\sum_{i=0}^{l(n)-1}c_{i}^{(n)}1_{[t_i^{(n)},t_{i+1}^{(n)}]}(u)\right)\left(\sum_{j=0}^{l(m)-1}c_{j}^{(m)}1_{[t_j^{(m)},t_{j+1}^{(m)}]}(v)\right)\phi(u,v)\d u\d v\\
		=:&\int_0^T\int_0^T\psi_n(u)\psi_m(v)\phi(u,v)\d u\d v.
		\endaligned$$
		Thus
		$$\aligned
		&\EE(\Sigma_n-\Sigma_m)^2=\EE(\Sigma_n^2)-\EE(\Sigma_n\Sigma_m)-\EE(\Sigma_m\Sigma_n)+\EE(\Sigma_m^2)\\
		=&\int_0^T\int_0^T\left(\psi_n(u)\psi_n(v)-\psi_n(u)\psi_m(v)-\psi_m(u)\psi_n(v)+\psi_m(u)\psi_m(v)\right)\phi(u,v)\d u\d v\\
		=&\int_0^T\int_0^T(\psi_n(u)-\psi_m(u))(\psi_n(v)-\psi_m(v))\phi(u,v)\d u\d v\\
		=&\int_0^T\int_0^T(\psi_n(u)-\psi_m(u))(\psi_n(v)-\psi_m(v))\left(\int_0^{u\wedge v}\frac{\partial F(u,r)}{\partial u}\frac{\partial F(v,r)}{\partial v}\d r\right)\d u\d v\\
		=&\int_0^T\left(\int_r^T(\psi_n(u)-\psi_m(u))\frac{\partial F(u,r)}{\partial u}\d u\right)\left(\int_r^T(\psi_n(v)-\psi_m(v))\frac{\partial F(v,r)}{\partial v}\d v\right)\d r\\
		=&\int_0^T\left(K_F^*(\psi_n(r)-\psi_m(r))\right)^2\d r\\
		=&\|K_F^*(\psi_n-\psi_m)\|_{L^2([0,T];\mathbb{R})}^2=\|h_n-h_m\|_{L^2([0,T];\mathbb{R})}^2\rightarrow0.
		\endaligned$$
	\end{proof}
	
	By the above lemma \ref{lem3.10}, we may define the stochastic integral for $F$-Gaussian process $B^F$.
	
	\begin{defn}\label{def3.11}
		Let $\psi\in\scr{H}^F$,
		define the $B^F$-integral with respect to $\psi$
		\begin{equation}\label{e3.33}
			\int_0^T\psi(s)\d B_s^F:=\lim\limits_{\Delta_n\in \T_n, |\Delta_n|\rightarrow0}\sum_{i=0}^{n-1}\left(\frac{1}{t_{i+1}-t_i}\int_{t_i}^{t_{i+1}}\psi(s)\d s\right)\left(B_{t_{i+1}}^F-B_{t_i}^F\right).
		\end{equation}
	\end{defn}	
	
	In the following, we present the connection between stochastic intgeal and $B^F$-integral.
	
	\begin{thm}\label{thm3.12}
		For a fixed $T>0$, we have
		\begin{equation}\label{e3.34}
			\int_0^T\psi(s)\d B_s^F=\int_0^TK_F^*\psi(s)\d B_s,\quad\forall\psi\in\scr{H}^F.
		\end{equation}
	\end{thm}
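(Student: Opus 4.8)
The plan is to reduce the identity to a computation on step functions and then pass to the limit in $L^2(\P)$. Fix $\psi\in\scr H^F$ and write $\psi=(K_F^*)^{-1}h$ with $h\in L^2([0,T];\R)$. For a partition $\Delta_n\in\T_n$ let $\psi_n:=\sum_{i=0}^{l(n)-1}c_i^{(n)}1_{[t_i^{(n)},t_{i+1}^{(n)}]}$ be the step function appearing in $\Sigma_n$, with $c_i^{(n)}$ the interval averages of $\psi$ as in \eqref{e3.26}. By Definition \ref{def3.11}, $\Sigma_n\to\int_0^T\psi(s)\,\d B_s^F$ in $L^2(\P)$, so the theorem follows once I establish two facts: first, the exact identity $\Sigma_n=\int_0^T K_F^*\psi_n(s)\,\d B_s$ for every $n$; and second, that the right-hand side converges to $\int_0^T K_F^*\psi(s)\,\d B_s$ in $L^2(\P)$.

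The core step is the identity $\Sigma_n=\int_0^T K_F^*\psi_n(s)\,\d B_s$, which I would prove by writing each increment of $B^F$ as a Wiener integral. By \eqref{e2.1}, $B_{t_{i+1}^{(n)}}^F-B_{t_i^{(n)}}^F=\int_0^{t_{i+1}^{(n)}}F(t_{i+1}^{(n)},s)\,\d B_s-\int_0^{t_i^{(n)}}F(t_i^{(n)},s)\,\d B_s$, so that $\Sigma_n=\int_0^T g_n(s)\,\d B_s$ with $g_n(s)=\sum_i c_i^{(n)}\big(F(t_{i+1}^{(n)},s)1_{\{s<t_{i+1}^{(n)}\}}-F(t_i^{(n)},s)1_{\{s<t_i^{(n)}\}}\big)$. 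On the other hand, inserting the step function $\psi_n$ into the explicit formula \eqref{e3.5} and applying the fundamental theorem of calculus $\int_{t_i^{(n)}}^{t_{i+1}^{(n)}}\partial_u F(u,s)\,\d u=F(t_{i+1}^{(n)},s)-F(t_i^{(n)},s)$ on each subinterval meeting $[s,T]$, one computes $K_F^*\psi_n$ explicitly. A bookkeeping comparison then shows $g_n=K_F^*\psi_n$ almost everywhere, whence $\Sigma_n=\int_0^T K_F^*\psi_n(s)\,\d B_s$. I expect this coefficient matching to be the main, though elementary, obstacle: it must be carried out with care on the subinterval containing $s$, where the diagonal contribution $F(s,s)\psi_n(s)$ from \eqref{e3.5} cancels exactly against the boundary term $-F(s,s)$ produced by the fundamental theorem of calculus, and this is the only place where the precise form \eqref{e3.5} of $K_F^*$ enters.

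To finish, I pass to the limit. Since $\psi_n$ is the piecewise average of $\psi$, one has $\psi_n\to\psi$ in $L^2([0,T];\R)$ as $|\Delta_n|\to0$; and because $K_F^*$ is a bounded operator on $L^2([0,T];\R)$ under \textbf{(A1)} and \textbf{(A4)} (by \eqref{e3.5}, the integral part is Hilbert--Schmidt and the multiplier $F(s,s)$ is bounded by continuity of $F$), it follows that $K_F^*\psi_n\to K_F^*\psi$ in $L^2([0,T];\R)$. By the It\^o isometry this yields $\int_0^T K_F^*\psi_n(s)\,\d B_s\to\int_0^T K_F^*\psi(s)\,\d B_s$ in $L^2(\P)$. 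Combining this with the step-function identity and with $\Sigma_n\to\int_0^T\psi(s)\,\d B_s^F$, uniqueness of limits in $L^2(\P)$ gives $\int_0^T\psi(s)\,\d B_s^F=\int_0^T K_F^*\psi(s)\,\d B_s$, which is the assertion of the theorem.
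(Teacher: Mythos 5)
Your proposal is correct and follows essentially the same route as the paper: both express the increments $B^F_{t_{i+1}}-B^F_{t_i}$ as Wiener integrals (equivalently, use $K_F^*1_{[0,t]}(s)=F(t,s)1_{[0,t]}(s)$), identify $\Sigma_n$ with $\int_0^T K_F^*\psi_n\,\d B_s$ for the step approximations $\psi_n$, and conclude by It\^o isometry. The only (minor) divergence is in the limiting step, where you invoke $\psi_n\to\psi$ in $L^2$ together with boundedness of $K_F^*$ rather than the paper's identification $K_F^*\psi_n=h_n$; your version is in fact the cleaner of the two, since applying $K_F^*$ to the piecewise averages of $\psi$ need not literally reproduce the piecewise averages of $h$.
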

	
	\begin{proof}
		Since $\psi\in \scr{H}^F$, we may assume that $\psi$ satisfy with \eqref{e3.26} and \eqref{e3.27}.
		By the definition of $B_t^F$, we have
		\begin{equation}\label{e3.35}
			B_t^F=\int_0^TF(t,s)1_{[0,t]}(s)\d B_s=\int_0^TK_F^*1_{[0,t]}(s)\d B_s.
		\end{equation}
		Thus, according to the definition \ref{def3.11}, we have
		$$\aligned
		&\int_0^T\psi(s)\d B_s^F\\
		=&\lim\limits_{n\rightarrow\infty}\sum_{i=0}^{n-1}\left(\frac{1}{t_{i+1}-t_i}\int_{t_i}^{t_{i+1}}\psi(s)\d s\right)(B_{t_{i+1}}^F-B_{t_i}^F)\\
		=&\lim\limits_{n\rightarrow\infty}\sum_{i=0}^{n-1}\left(\frac{1}{t_{i+1}-t_i}\int_{t_i}^{t_{i+1}}\psi(s)\d s\right)\int_0^T\left(K_F^*1_{[0,t_{i+1}]}(s)-K_F^*1_{[0,t_i]}(s)\right)\d B_s\\
		=&\lim\limits_{n\rightarrow\infty}\int_0^TK_F^*\left(\sum_{i=0}^{n-1}\left(\frac{1}{t_{i+1}-t_i}\int_{t_i}^{t_{i+1}}\psi(s)\d s\right)1_{[t_i,t_{i+1}]}(s)\right)\d B_s\\
		=&\lim\limits_{n\rightarrow\infty}\int_0^TK_F^*\left(\psi_n(s)\right)\d B_s\\=&\lim\limits_{n\rightarrow\infty}\int_0^T h_n(s)\d B_s\\
		=&\int_0^TK_F^*\psi(s)\d B_s,
		\endaligned$$
		where the last equal and the second last equal are due to \eqref{e3.27} and \eqref{e3.28}.
	\end{proof}

	\begin{Remark}\label{rem3.13}
		Samko-Kilbas-Marichev \cite{SKM} introduced the Riemann-Liouville fractional integral and derivative operators, which give a simple representation of $K_F$, $K_F^*$ and their inverse operators. In \cite{DU, DU2}, Decreusefond-{\"U}st{\"u}nel represented the Wiener type and divergence type integrals for the fractional Brownian motion, and Alos-Mazet-Nualart \cite{AMN, AMN2, CN} also introduced the divergence type integrals of the fractional Brownian motion. Carmona-Coutin-Montseny \cite{CCM} proved the divergence type integrals for a kind of semi-martingale kernals $K(t,s)$. Nualart \cite{N, N2,N3} obtained the derivative and divergence operators on the Gaussian space and considered the anticipate stochastic differential equations.
	\end{Remark}

	\section{Quasi-invariant Theorem}
	In this section, we will prove that the quasi-invariant theorem for  $F$-Gaussian process $B^F$. Let
	$$R_F:=K_F\circ K_F^*.$$
	Then $R_F$ is the combination of operators $K_F$ and $K_F^*$. Let $\F C_{b}^{OU}$ be the space of all bounded Lipschitz cylinder functions on $W_T$, that is to say, for each $F\in \F C_{b}^{OU}$, there exists some $m\ge1$, $0<t_1<\dots<t_m\le T$ and $f$ is a bounded Lipschitz function on $\R^m$ such that
	$$F=f(\gamma(t_1),\dots,\gamma(t_m)), \quad \gamma\in W_T.$$
	
	\begin{thm}{\bf [Quasi-invariant Theorem]}\label{thm4.1}
		Assume that
		\begin{equation}\label{e4.1}
			F(t,s)=f(t)\hat{F}(t,s),\quad t\geq s\ge0
		\end{equation}	
		for some function $f$ with $f,\frac{1}{f}\in L^2([0,T];\R)$, where $\hat{F}$ satisfies conditions {\bf(A1)}, {\bf(A2)} and {\bf(A4)}. Then for each $h^F\in \H^F$, we have
		$$\int_{\Omega}G\big(B^F(\omega)+h^F\big)\d \P(\omega)=\int_{\Omega}G\big(B^F(\omega)\big)\alpha_{h^F}(\omega)\d\P(\omega)$$
		for any bounded Borel measurable function $G$ on $W_T$, where
		$$\alpha_{h^F}=\exp\left\{\int^T_0(R_{\hat{F}})^{-1}\frac{h^F}{f}\d B^{\hat{F}}-\frac{1}{2}\left\|\frac{h^F}{f}\right\|_{\H^{\hat{F}}}^2\right\}.$$
	\end{thm}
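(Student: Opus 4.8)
The plan is to reduce the statement to the classical Cameron--Martin quasi-invariance theorem for the standard Brownian motion $B$, by viewing $B^F$ as a functional of $B$. Write $\Phi$ for the map $B\mapsto B^F$, so that $B^F_t=\Phi(B)_t=\int_0^tF(t,s)\,\d B_s$. The decisive observation is that shifting the \emph{underlying} Brownian path $B$ by an element $h\in\H$ produces exactly the shift of $B^F$ by $h^F=K_F\dot h$: indeed
$$\Phi(B+h)_t=\int_0^tF(t,s)\,\d(B+h)_s=B^F_t+\int_0^tF(t,s)\dot h(s)\,\d s=B^F_t+K_F\dot h(t)=B^F_t+h^F_t.$$
Thus for any bounded Borel $G$ on $W_T$, setting $\Psi:=G\circ\Phi$ (a bounded measurable functional on the Brownian path space), one has $G\big(B^F(\oo)+h^F\big)=\Psi(B(\oo)+h)$, so the left-hand side of the theorem becomes $\int_\OO\Psi(B(\oo)+h)\,\d\P(\oo)$.

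Next I would invoke the classical Cameron--Martin theorem for $\Psi$ and the shift $h\in\H$, which gives
$$\int_\OO\Psi(B(\oo)+h)\,\d\P(\oo)=\int_\OO\Psi(B(\oo))\exp\Big\{\int_0^T\dot h(s)\,\d B_s-\tfrac12\|h\|_\H^2\Big\}\d\P(\oo).$$
Since $\Psi(B)=G(B^F)$, this already yields the quasi-invariance identity, with density $\exp\{\int_0^T\dot h\,\d B-\tfrac12\|h\|_\H^2\}$; it remains to rewrite this in the stated form. Here the factorization \eqref{e4.1}, $F=f\hat F$, enters: from $K_F\dot h(t)=f(t)\int_0^t\hat F(t,s)\dot h(s)\,\d s$ we obtain $h^F/f=K_{\hat F}\dot h\in\H^{\hat F}$, so by the isometry \eqref{e3.4} applied to $\hat F$ we get $\|h^F/f\|_{\H^{\hat F}}^2=\|K_{\hat F}\dot h\|_{\H^{\hat F}}^2=\|h\|_\H^2$, matching the quadratic term.

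For the stochastic-integral term I would use the inverse-operator identities. Under {\bf(A1)}, {\bf(A2)}, {\bf(A4)} on $\hat F$, both $K_{\hat F}$ (Theorem \ref{thm3.2}$(1)$) and $K_{\hat F}^*$ (Theorem \ref{thm3.5}) are invertible, so $R_{\hat F}=K_{\hat F}\circ K_{\hat F}^*$ is invertible with $(R_{\hat F})^{-1}=(K_{\hat F}^*)^{-1}\circ K_{\hat F}^{-1}$. Applying this to $h^F/f=K_{\hat F}\dot h$ gives $(R_{\hat F})^{-1}(h^F/f)=(K_{\hat F}^*)^{-1}\dot h$, which lies in $\scr H^{\hat F}=(K_{\hat F}^*)^{-1}(L^2)$ because $\dot h\in L^2$. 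Theorem \ref{thm3.12} then converts the $B^{\hat F}$-integral into a Brownian one,
$$\int_0^T(R_{\hat F})^{-1}\frac{h^F}{f}\,\d B^{\hat F}=\int_0^TK_{\hat F}^*(K_{\hat F}^*)^{-1}\dot h\,\d B=\int_0^T\dot h\,\d B,$$
using $K_{\hat F}^*(K_{\hat F}^*)^{-1}=\Id$ on $L^2$ from Theorem \ref{thm3.5}. Combining the two computations gives $\alpha_{h^F}=\exp\{\int_0^T\dot h\,\d B-\tfrac12\|h\|_\H^2\}$ in the desired form, completing the proof.

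A remark on where the difficulty lies. The conceptual step --- transporting the shift and invoking Cameron--Martin --- is clean and is really the heart of the matter; the genuine work is operator bookkeeping. One must verify that $(R_{\hat F})^{-1}(h^F/f)$ actually belongs to the domain $\scr H^{\hat F}$ on which the integral of Definition \ref{def3.11} and the representation of Theorem \ref{thm3.12} are valid, and that the factorization $F=f\hat F$ (with $f,\tfrac1f\in L^2$) is precisely what pulls the scalar $f$ out of $K_F$ so that the remaining operators act through $\hat F$. The main obstacle is therefore ensuring all operator inverses and the $\hat F$-integral are well defined on the correct spaces; once {\bf(A1)}, {\bf(A2)}, {\bf(A4)} are in force this reduces to the results already established in Section 3.
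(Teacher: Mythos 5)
Your proposal is correct and follows essentially the same route as the paper: transfer the shift $h^F=K_F\dot h$ back to the shift $h$ of the underlying Brownian motion, invoke the classical Cameron--Martin/Girsanov theorem to get the density $\exp\{\int_0^T\dot h\,\d B-\tfrac12\|h\|_\H^2\}$, and then rewrite that density in the stated form via $(R_{\hat F})^{-1}(h^F/f)=(K_{\hat F}^*)^{-1}\dot h$, the isometry $\|K_{\hat F}\dot h\|_{\H^{\hat F}}=\|h\|_\H$, and Theorem \ref{thm3.12}. The only cosmetic difference is that the paper first proves the case $f\equiv1$ on cylinder functions and then handles general $f$ by reparametrizing the cylinder function, whereas you absorb the factorization directly into the identity $h^F/f=K_{\hat F}\dot h$; both arguments are the same in substance.
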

	
	\begin{proof}
		$(a)$ First we assume that $f(t)\equiv1$. In this case $\hat{F}\equiv F$. Since all cylinder functions is dense in $\B_b(\mu^T)$, thus we only need to prove the conclusion holds for every $G\in \F C_{b}^{OU}$. Assume that $G\in \F C_{b}^{OU}$ with
		$$G(\gamma)=g\big(\gamma_{t_1},\cdots,\gamma_{t_m}\big),\quad \gamma\in W_T,$$	
		where $\{t_i\}_{i=1}^m$ is a partiation of $[0,T]$ and $g\in Lip_b(\mathbb{R}^m)$. Then we have
		\begin{equation}\label{e4.2}
			\aligned
			&\int_{\Omega}G\big(B^F+h^F\big)\d\P=\int_{\Omega}G\big(B^{\hat{F}}+h^{\hat{F}}\big)\d\P\\
			&=\int_{\Omega}g\left(\int^{t_1}_0{\hat{F}}(t_1,s)\d B_s+\int^{t_1}_0{\hat{F}}(t_1,s)\dot{h}(s)\d s, \cdots,\int^{t_m}_0{\hat{F}}(t_m,s)\d B_s+\int^{t_m}_0{\hat{F}}(t_m,s)\dot{h}(s)\d s\right)\d \P\\
			&=\int_{\Omega}g\left(\int^{t_1}_0{\hat{F}}(t_1,s)\d (B_s+h_s),\cdots,\int^{t_m}_0{\hat{F}}(t_m,s)\d (B_s+h_s)\right)\d\P\\
			&=\int_{\Omega}g\left(\int^{t_1}_0F(t_1,s)\d (B_s+h_s),\cdots,\int^{t_m}_0F(t_m,s)\d (B_s+h_s)\right)\d\P\\
			&=\int_{\Omega}G\big(B^F\big)\alpha_{h}\d\P,
			\endaligned
		\end{equation}
		where the last equal is due to Girsanov theorem for the Brownian motion and
$$\alpha_{h}=\exp\left\{\int^T_0\dot{h}(t)\d B_t-\frac{\|h\|_\H^2}{2}\right\}.$$
In addition, according to Theorem \ref{thm3.5}, we have
		\begin{equation}\label{e4.3}
			\aligned
			\alpha_{h}&=\exp\left\{\int^T_0K_{\hat{F}}^*(K_{\hat{F}}^*)^{-1}\dot{h}(t)\d B_t-\frac{\|K_{\hat{F}}\dot{h}\|_{\H^{\hat{F}}}^2}{2}\right\}\\
			&=\exp\left\{\int^T_0(K_{\hat{F}}^*)^{-1}\dot{h}(t)\d B^{\hat{F}}_t-\frac{\|K_{\hat{F}}\dot{h}\|_{\H^{\hat{F}}}^2}{2}\right\}\\
			&=\exp\left\{\int^T_0(R_{\hat{F}})^{-1}h^{\hat{F}}\d B^{\hat{F}}-\frac{\|h^{\hat{F}}\|_{\H^{\hat{F}}}^2}{2}\right\}.
			\endaligned
		\end{equation}
		
		$(b)$ For general $F(t,s)=f(t)\hat{F}(t,s)$, let	
		$$\hat{G}(\gamma)=g\big(f(t_1)\gamma_{t_1},\cdots,f(t_m)\gamma_{t_m}\big),\quad \gamma\in W_T,$$		
		where $\{t_i\}_{i=1}^m$ is a partition of $[0,T]$ and $g\in Lip_b(\mathbb{R}^m)$.	
		Then $\hat{G}(\gamma)=\hat{g}(\gamma_{t_1}, \cdots, \gamma_{t_m})$, where
		$$\hat{g}(x_1,\cdots,x_m)=g(f(t_1)\gamma_{t_1}, \cdots, f(t_m)\gamma_{t_m}), ~~~~~x=(x_1,\cdots,x_m)\in \R^m.$$
		Thus $\hat{G}\in \F C^{OU}_b$. Applying $\hat{G}$ into \eqref{e4.2},	
		\begin{equation}\label{e4.4}
			\aligned
			&\int_{\Omega}\hat{G}\big(B^{\hat{F}}+h^{\hat{F}}\big)\d\P=\int_{\Omega}\hat{G}\big(B^{\hat{F}}\big)\alpha_{h}\d\P.
			\endaligned
		\end{equation}		
		Since for each $\omega\in \Omega$,
		\begin{equation}\label{e4.5}
			\aligned
			&\hat{G}\big(B^{\hat{F}}+h^{\hat{F}}\big)\\
			&=g\left(f(t_1)\int_0^{t_1}{\hat{F}}(t_1,s)\d(B_s+h_s),\dots,f(t_m)\int_0^{t_m}{\hat{F}}(t_m,s)\d(B_s+h_s)\right)\\
			&=g\left(B_{t_1}^F+h_{t_1}^F,\dots,B_{t_m}^F+h_{t_1}^F\right)\\
			&=G\big(B^F+h^F\big)
			\endaligned
		\end{equation}	
		and combining \eqref{e4.4} and
		\begin{equation}\label{e4.6}
			\aligned
			&\hat{G}\big(B^{\hat{F}}\big)=G\big(B^F\big),
			\endaligned
		\end{equation}	
	we get to
		\begin{equation}\label{e4.7}
			\aligned
			&\int_{\Omega}G\big(B^F+h^F\big)\d\P=\int_{\Omega}G\big(B^F\big)\alpha_{h}\d\P.
			\endaligned
		\end{equation}	
		Finally, the conclusion is from	the following equality
		\begin{equation}\label{e4.8}
			h_t^F=\int_0^tF(t,s)\dot{h}(s)\d s=f(t)\int_0^t{\hat{F}}(t,s)\dot{h}(s)\d s=f(t)h_t^{\hat{F}}.
		\end{equation}
	\end{proof}

For each $\varepsilon\in \R$ and $h^F\in \H^F$, define the process $B^{F,\varepsilon}:=B^F+\varepsilon h^F$. Let $\mu^F$ and $\mu^{F,\varepsilon}$ are the laws of $B^F$ and $B^{F,\varepsilon}$ respectively.
	\iffalse\begin{Remark}\label{rem4.2}Under the same conditions of Theorem \ref{thm4.1},
		the new process $B^F+\varepsilon h^F$ is a Gaussian process under the probability measure $\alpha_{-\varepsilon h}\P$.
	\end{Remark}\fi

	\begin{cor}{\bf [Girsanov Theorem]}\label{cor4.2}
		 Assume that $F$ satisfies conditions {\bf(A1)}, {\bf(A2)} and {\bf(A4)}, $B^{F,\varepsilon}$ is an F-Gaussian process under the distribution
		$$\d\mu^{F,\varepsilon}=\alpha_{-\varepsilon h^F}\d\mu^F,$$
		where
		$$\alpha_{h^F}=\exp\left\{\int_0^T(R_H)^{-1}h^F\d B^F-\frac{\|h^F\|_{\H^F}^2}{2}\right\}.$$
	\end{cor}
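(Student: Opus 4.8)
The plan is to read Corollary \ref{cor4.2} as a change-of-measure reformulation of the quasi-invariant Theorem \ref{thm4.1}. Concretely, I would put $\d\mathbb{Q}:=\alpha_{-\varepsilon h^F}\,\d\P$ on $(\OO,\F)$, where $\alpha_{-\varepsilon h^F}$ is regarded as a measurable functional of the path $B^F$; this is legitimate because, by Definition \ref{def3.11} and Theorem \ref{thm3.12}, the integral $\int_0^T(R_F)^{-1}h^F\,\d B^F$ entering the density is a functional of $B^F$ alone (the corollary writes $R_H$, a typo for $R_F$). Since $\EE[\alpha_{-\varepsilon h^F}]=1$, $\mathbb{Q}$ is a probability measure equivalent to $\P$, and its push-forward under $B^F$ is precisely the measure $\mu^{F,\varepsilon}=\alpha_{-\varepsilon h^F}\mu^F$ appearing in the statement. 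Thus the assertion ``$B^{F,\varepsilon}$ is an $F$-Gaussian process under $\mu^{F,\varepsilon}$'' is equivalent to showing that, for every bounded Borel function $G$ on $W_T$,
$$\EE\big[\alpha_{-\varepsilon h^F}\,G(B^F+\varepsilon h^F)\big]=\EE\big[G(B^F)\big].$$

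First I would unfold the left-hand side by introducing $\Phi(\gg):=G(\gg)\,\alpha_{-\varepsilon h^F}(\gg-\varepsilon h^F)$ for $\gg\in W_T$, chosen so that $\Phi(B^F+\varepsilon h^F)=\alpha_{-\varepsilon h^F}(B^F)\,G(B^F+\varepsilon h^F)$. Applying Theorem \ref{thm4.1} to $\Phi$ with the shift $\varepsilon h^F$ then gives
$$\EE\big[\Phi(B^F+\varepsilon h^F)\big]=\EE\big[\Phi(B^F)\,\alpha_{\varepsilon h^F}\big]=\EE\big[G(B^F)\,\alpha_{-\varepsilon h^F}(B^F-\varepsilon h^F)\,\alpha_{\varepsilon h^F}(B^F)\big].$$
Hence the whole identity reduces to the pointwise cocycle relation
$$\alpha_{-\varepsilon h^F}(B^F-\varepsilon h^F)\,\alpha_{\varepsilon h^F}(B^F)\equiv1.$$

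To establish this I would pass to the Brownian representation from the proof of Theorem \ref{thm4.1}: by \eqref{e4.3} one has $\alpha_{\pm\varepsilon h^F}=\exp\{\pm\varepsilon\int_0^T\dot h\,\d B-\tfrac12\varepsilon^2\|h\|_\H^2\}$ with $h^F=K_F\dot h$, so their product at the unshifted path equals $\exp\{-\varepsilon^2\|h\|_\H^2\}$. The shift $B^F\mapsto B^F-\varepsilon h^F$ corresponds to $B\mapsto B-\varepsilon h$, under which the Wiener integral transforms as $\int_0^T\dot h\,\d B\mapsto\int_0^T\dot h\,\d B-\varepsilon\|h\|_\H^2$; this produces exactly the compensating factor $\exp\{+\varepsilon^2\|h\|_\H^2\}$, and the product collapses to $1$. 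Substituting back yields $\EE[\alpha_{-\varepsilon h^F}\,G(B^F+\varepsilon h^F)]=\EE[G(B^F)]$, i.e.\ the law of $B^{F,\varepsilon}$ under $\mathbb{Q}$ is $\mu^F$, which is the claim. Equivalently, and more briefly, one may argue in one stroke: the classical Girsanov theorem already used in Theorem \ref{thm4.1} makes $B+\varepsilon h$ a standard Brownian motion under $\alpha_{-\varepsilon h}\,\d\P=\alpha_{-\varepsilon h^F}\,\d\P$, and the $F$-transform of a Brownian motion is by definition (Section \ref{thm3.2} setup) an $F$-Gaussian process, so $B^{F,\varepsilon}=(B+\varepsilon h)^F$ is $F$-Gaussian under $\mathbb{Q}$.

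I expect the main obstacle to be bookkeeping rather than analysis: one must be scrupulous about the point at which the density functional $\alpha$ is evaluated — at $B^F$ versus at the shifted path $B^F-\varepsilon h^F$ — since it is precisely this that fixes the sign $-\varepsilon$ in the density and makes the cocycle identity hold. The secondary points to verify are the normalisation $\EE[\alpha_{-\varepsilon h^F}]=1$, so that $\mathbb{Q}$ is a genuine probability measure, and the measurability of $\alpha_{-\varepsilon h^F}$ as a functional of $B^F$; both are routine consequences of Theorems \ref{thm3.5} and \ref{thm3.12}. The factorisation $F=f\hat F$ of Theorem \ref{thm4.1} does not enter here, since the corollary is stated for $F$ itself satisfying {\bf(A1)}, {\bf(A2)}, {\bf(A4)} (the case $f\equiv1$), so that $\alpha_{h^F}$ is given directly by the displayed exponential with $R_F$.
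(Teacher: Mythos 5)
Your proposal is correct and follows essentially the same route as the paper: both reduce the corollary to the quasi-invariance identity of Theorem \ref{thm4.1} (your version via the test function $G(\cdot)\,\alpha_{-\varepsilon h^F}(\cdot-\varepsilon h^F)$ plus the cocycle identity, the paper's via the equivalent substitution $G(\cdot)\mapsto G(\cdot+\varepsilon h^F)$ with shift $-\varepsilon h^F$), and your observation that the resulting identity already pins down the law of $B^{F,\varepsilon}$ under $\alpha_{-\varepsilon h^F}\,\d\P$ as $\mu^F$ subsumes the paper's additional verification of the mean and covariance. Your remarks on the typo $R_H$ for $R_F$ and on the normalisation $\EE[\alpha_{-\varepsilon h^F}]=1$ are also consistent with the paper's intent.
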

	
	\begin{proof}
		By theorem \ref{thm4.1}, for all $G\in L^2(\P)$,
		$$\int_{\Omega}G\big(B^{F,\varepsilon}\big)\alpha_{-\varepsilon h^F}\d\P=\int_{\Omega}G\big(B^F\big)\d\P.$$
		Therefore $B^{F,\varepsilon}$ is an $F$-Gaussian process under the distribution $\mu^{F,\varepsilon}$.

		In fact, if we choose two special functions
		$$G_1\big(B^F\big)=B_s^F,\quad G_2\big(B^F\big)=B_s^F B_t^F,$$
		and let $\EE^{F,\varepsilon}$ be the expectation under the distribution $\mu^{F,\varepsilon}$, then
		$$\EE^{F,\varepsilon}(B_s^{F,\varepsilon})=\EE(B_s^F)=0.$$
		And
		$$\EE^{F,\varepsilon}(B_t^{F,\varepsilon}B_s^{F,\varepsilon})=\EE(B_t^FB_s^F)=\int_0^{t\wedge s}F(t,r)F(s,r)\d r.$$
		Moreover, $B^{F,\varepsilon}$ is a continuous Gaussian. Therefore $B^{F,\varepsilon}$ is an $F$-Gaussian process under the distribution $\mu^{F,\varepsilon}$.
	\end{proof}

	\begin{thm}{\bf [Integration by parts formula]}\label{thm4.3}
		Under the same conditions of Theorem \ref{thm4.1}, for each $h^F\in\H^F$ and $G\in\F C_b^{OU}$, we have
		\begin{equation}\label{e4.9}
			\int_{\Omega}D_{h^F}G\big(B^F\big)\d\P=\int_{\Omega}G\big(B^F\big)\beta_{h^F}\d\P,
		\end{equation}
		where
		$$\beta_{h^F}=\int_0^T\big(R_{\hat{F}}\big)^{-1}\frac{h^F(t)}{f(t)}\d B^{\hat{F}}_t.$$
	\end{thm}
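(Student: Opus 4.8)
The plan is to obtain the integration by parts formula by differentiating the quasi-invariant identity of Theorem \ref{thm4.1} along a scaling parameter. Fix $h^F\in\H^F$ and $G\in\F C_b^{OU}$, and apply Theorem \ref{thm4.1} with $\varepsilon h^F$ in place of $h^F$ for every $\varepsilon\in\R$, which gives
\begin{equation}\label{e4.10}
	\int_\Omega G\big(B^F(\omega)+\varepsilon h^F\big)\,\d\P(\omega)=\int_\Omega G\big(B^F(\omega)\big)\alpha_{\varepsilon h^F}(\omega)\,\d\P(\omega).
\end{equation}
Because $(R_{\hat F})^{-1}$ is linear and the Cameron--Martin norm enters quadratically, the substitution $h^F\mapsto\varepsilon h^F$ (equivalently $h\mapsto\varepsilon h$, using $h^F/f=h^{\hat F}$ from \eqref{e4.8}) turns the density into
$$\alpha_{\varepsilon h^F}=\exp\left\{\varepsilon\int_0^T(R_{\hat F})^{-1}\frac{h^F}{f}\,\d B^{\hat F}-\frac{\varepsilon^2}{2}\left\|\frac{h^F}{f}\right\|_{\H^{\hat F}}^2\right\},$$
so that $\alpha_{0}\equiv1$ and, at least formally, $\frac{\d}{\d\varepsilon}\big|_{\varepsilon=0}\alpha_{\varepsilon h^F}=\beta_{h^F}$.

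Next I would differentiate both sides of \eqref{e4.10} at $\varepsilon=0$. On the left-hand side, since $G=g(\gamma_{t_1},\dots,\gamma_{t_m})$ with $g\in\Lip_b(\R^m)$, the map $\varepsilon\mapsto G(B^F+\varepsilon h^F)$ is differentiable (almost surely, the law of $(B^F_{t_1},\dots,B^F_{t_m})$ being Gaussian hence absolutely continuous) with
$$\frac{\d}{\d\varepsilon}\Big|_{\varepsilon=0}G\big(B^F+\varepsilon h^F\big)=D_{h^F}G\big(B^F\big),$$
while the difference quotients are dominated by the constant $\Lip(g)\,\big(\sum_{i=1}^m|h^F_{t_i}|^2\big)^{1/2}$. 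Dominated convergence then lets me interchange $\frac{\d}{\d\varepsilon}\big|_{0}$ with the integral, producing $\int_\Omega D_{h^F}G(B^F)\,\d\P$. Since $G(B^F)$ is bounded, the right-hand side reduces to differentiating $\varepsilon\mapsto\alpha_{\varepsilon h^F}$ under the integral and showing $\frac{\d}{\d\varepsilon}\big|_{0}\int_\Omega G(B^F)\alpha_{\varepsilon h^F}\,\d\P=\int_\Omega G(B^F)\beta_{h^F}\,\d\P$.

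The main obstacle is exactly this last interchange, because $\alpha_{\varepsilon h^F}$ is the exponential of a stochastic integral rather than a bounded quantity. To handle it I would observe, via Theorem \ref{thm3.12} and the Wiener isometry, that $X:=\int_0^T(R_{\hat F})^{-1}\frac{h^F}{f}\,\d B^{\hat F}$ equals $\int_0^T\dot h\,\d B$ and is therefore a centered Gaussian variable with variance $\|h^F/f\|_{\H^{\hat F}}^2<\infty$; hence $\alpha_{\varepsilon h^F}=\exp\{\varepsilon X-\tfrac{\varepsilon^2}{2}\|h^F/f\|_{\H^{\hat F}}^2\}$ has exponential moments of every order. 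Writing $c:=\|h^F/f\|_{\H^{\hat F}}^2$ and using the elementary bound $|e^a-1|\le|a|e^{|a|}$ with $a=\varepsilon X-\tfrac{\varepsilon^2}{2}c$, one gets for $|\varepsilon|\le1$ the uniform domination
$$\left|\frac{\alpha_{\varepsilon h^F}-1}{\varepsilon}\right|\le\Big(|X|+\tfrac{c}{2}\Big)\,e^{|X|+c/2},$$
and the right-hand side is integrable by the Gaussianity of $X$. Dominated convergence then yields $\frac{\d}{\d\varepsilon}\big|_{0}\alpha_{\varepsilon h^F}=X=\beta_{h^F}$ in $L^1(\P)$, and combining the two sides gives \eqref{e4.9}, completing the proof.
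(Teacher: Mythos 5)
Your proposal is correct and follows essentially the same route as the paper: apply Theorem \ref{thm4.1} with $\varepsilon h^F$, then differentiate both sides at $\varepsilon=0$ and identify $\frac{\d}{\d\varepsilon}\big|_{\varepsilon=0}\alpha_{\varepsilon h^F}=\beta_{h^F}$. The only difference is that you justify the interchange of derivative and expectation by an explicit Gaussian domination bound for the exponential density, whereas the paper simply invokes the $L^2$-martingale property of $\{\alpha_{\varepsilon h^F}\}$ together with dominated convergence; your version is, if anything, slightly more detailed on this point.
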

	
	\begin{proof}For any $\varepsilon\in[-1,1]$ and $h^F\in \H^F$, we have $\varepsilon h^F\in \H^F$.
		By Theorem \ref{thm4.1}, we have
		\begin{equation}\label{e4.10}
			\int_{\Omega}G\big(B^F+\varepsilon h^F\big)\d\P=\int_{\Omega} G\big(B^F(\omega)\big)\alpha_{\varepsilon h^F}\d\P.
		\end{equation}
		By \eqref{e4.3} in the theorem \ref{thm4.1}, we know that $\{\alpha_{\varepsilon h^F},\varepsilon\in[-1,1]\}$ are $L^2$-integrable martingale. Hence, by the dominated convergnece theorem, and taking the derivative with respect to $\varepsilon$ on the both sides of the above equation \eqref{e4.10}, we have
		\begin{equation}\label{e4.11}
			\aligned&
			\frac{\d\alpha_{\varepsilon h^F}}{\d \varepsilon}\Big|_{\varepsilon=0}=\frac{\d}{\d\varepsilon}\exp\left\{\int^T_0(R_{\hat{F}})^{-1}\frac{\varepsilon h^F}{f}\d B^{\hat{F}}-\frac{1}{2}\left\|\frac{\varepsilon h^F}{f}\right\|_{\H^{\hat{F}}}^2\right\}\Big|_{\varepsilon=0}\\
			=&\exp\left\{\int^T_0(R_{\hat{F}})^{-1}\frac{\varepsilon h^F}{f}\d B^{\hat{F}}-\frac{1}{2}\left\|\frac{\varepsilon h^F}{f}\right\|_{\H^{\hat{F}}}^2\right\}\left(\int_0^T(R_{\hat{F}})^{-1}\frac{h^F}{f}\d B^{\hat{F}}-\varepsilon\left\|\frac{h^F}{f}\right\|_{\H^{\hat{F}}}^2\right)\Big|_{\varepsilon=0}\\
			=&\int^T_0(R_{\hat{F}})^{-1}\frac{h^F(t)}{f(t)}\d B^{\hat{F}}_t.
			\endaligned
		\end{equation}
		We get immediately the conclusion.
	\end{proof}

	%\color{red}
	\begin{Remark}\label{rem4.4}
		%Buckdahn \cite{B3} obtained the Girsanov transformation for the fractional Brownian motion,
		$(1)$ Buckdahn \cite{B2, B3} first proved the Girsanov theorem for a certain transformation $W(\cdot)+\int_{0}^{\cdot}K_s\d s$ of Brownian motion $W$, where $K_s$ is anticipate process. Jien-Ma \cite{JM} proved the anticipate Girsanov theorem for fbm case.
		
		$(2)$ Norros-Valkelia-Virtamo \cite{NVV} gave some applications of the Girsanov theorem like maximum likelihood estimation and so on.
		
		$(3)$Decreusefond-{\"U}st{\"u}nel \cite{DU, DU2} proved the quasi-invariant theorem and Girsanov theorem for the fractional Brownian motion.
	\end{Remark}
	%\color{black}

	To establish functional inequalities in the next section, in the following we construct three quasi-regular Dirichlet forms on $W_T$, to do that, we first will give serval gradient definitions.

	\begin{defn}\label{def4.5}
		$(1)$ \textbf{Directional derivarive:} For each $h\in L^2([0,T];\R)$ and $G:W_T:\rightarrow\R$ is a function. We say that $D_hG$ is the directional derivative with respect to $h$ if the limition
		\begin{equation}\label{e4.12}
			D_{h}G(\gamma)=\lim\limits_{\varepsilon\rightarrow0}\frac{G(\gamma+\varepsilon h)-G(\gamma)}{\varepsilon}
		\end{equation}
		exists in $L^2(\mu^T)$.
		
		$(2)$ \textbf{Gradient:} Suppose that the directional derivative $D_{h}G$ exists. By Risez representation theorem, there exist three gradients $\nabla^F G$, $\nabla G$ and $D G$ such that
		\begin{equation}\label{e4.13}
			\aligned
			&D_{h}G(\gamma)=\<\nabla^F G(\gamma), h\>_{\H^F},\quad ~~~~\forall~ h\in\H^F,\\
			&D_{h}G(\gamma)=\<\nabla G(\gamma), h\>_{\H},\quad ~~~~~~~~\forall~h\in\H,\\
			&D_{h}G(\gamma)=\<D G(\gamma), h\>_{L^2([0,T];\R)},\quad \forall~ h\in L^2([0,T];\R).
			\endaligned
		\end{equation}
		Here $\nabla^FG(\gamma)$, $\nabla G(\gamma)$ and $DG(\gamma)$ are called the damped gradient, Malliavin gradient and the $L^2$-gradient of $G$ respectively.
	\end{defn}
To construct the following $L^2$-quasi regular Dirichlet form, we still need to introduce a class of new cylindric functions(refer to the argument of the Section 1 in \cite{RMZZ}):
	\begin{equation}\label{e4.14}
		\aligned
		\F C_b^{L^2}=\bigg\{&G(\gamma)=g\bigg(\int_0^Tg_1(s,\gamma_s)\d s,\dots,\int_0^Tg_m(s,\gamma_s)\d s\bigg):\quad\gamma\in W_T, \\
		&g\in Lip_b(\R^m),g_i\in C^{0,1}([0,T]\times\R;\R),1\le i\le m\bigg\}.
		\endaligned
	\end{equation}
It is easy to show that for each $G\in \F C_b^{L^2}$ and $h\in L^2([0,T];\R)$, the directional derivative $D_{h}G$ of $G$ exists.

	\begin{Remark}\label{rem4.6}	
		$(1)$ Suppose that $G\in \F C_b^{OU}$ with
		$$G(\gamma)=g\big(\gamma(t_1),\cdots,\gamma(t_m)\big),\quad\gamma\in W_T$$
		for $g\in Lip_{b}(\R^m),1\le i\le m$. Then for each $h^F\in \H^F$,
		$$\aligned
		D_{h^F}G(\gamma)=&\sum_{i=1}^{m}\partial_ig(\gamma)h^F(t_i)=\sum_{i=1}^{m}\partial_ig(\gamma)\int_0^T1_{[0,t_i]}(s)\d K_Fh'(s)\\
		=&\sum_{i=1}^{m}\partial_ig(\gamma)\<K_F^*1_{[0,t_i]},h'\>_{L^2([0,T];\R)}=\left\<\sum_{i=1}^m\partial_igK_FK_F^*g_i,h^F\right\>_{\H^F}\\
		=&\left\<\sum_{i=1}^{m}\partial_ig(\gamma)R_F(1_{[0,t_i]}),h^F\right\>_{\H^F}=\left\<R_F\left(\sum_{i=1}^m1_{[0,t_i]}(t)\partial_ig\right),h^F\right\>_{\H^F},
		\endaligned$$
		where $\partial_ig$ is the derivative of the $i$-the component of the function $g$.
		Thus by definition \eqref{e4.13},
		\begin{equation}\label{e4.15}
			\nabla^F G(\gamma)(t)=R_F\left(\sum_{i=1}^m1_{[0,t_i]}(t)\partial_ig\right)=R_F(\dot{\overbrace{\nabla F}}),
		\end{equation}
		where
$$\dot{\overbrace{\nabla F}}(t):=\frac{d}{dt}\nabla F(t).$$
		$(2)$ Suppose that $G\in \F C_b^{L^2}$ with
		$$G(\gamma)=g\bigg(\int_0^Tg_1(s,\gamma_s)\d s,\dots,\int_0^Tg_m(s,\gamma_s)\d s\bigg),\quad\gamma\in W_T$$
		for $g\in Lip_b(\R^m),g_i\in C^{0,1}([0,T]\times\R;\R),1\le i\le m$. Then for each $h\in L^2([0,T];\R)$,
		$$D_hG(\gamma)=\sum_{i=1}^{m}\partial_ig\int_0^T\frac{\partial g_i(s,\gamma_s)}{\partial\gamma}h(s)\d s=\left\<\sum_{i=1}^{m}\partial_ig\frac{\partial g_i(s,\gamma_s)}{\partial\gamma},h(s)\right\>_{L^2([0,T];\R)}.$$
		Thus by the definition \eqref{e4.13},
		\begin{equation}\label{e4.16}
			DG(\gamma)(t)=\sum_{i=1}^{m}\partial_ig\frac{\partial g_i(t,\gamma_t)}{\partial\gamma}.
		\end{equation}
	\end{Remark}
	
	\begin{defn}\label{def4.7}
		Define by three quadratic forms:

$(1)$ The damped quadratic form $(\E,\F C^{OU}_b)$
		\begin{equation}\label{e4.17}
			\E(\psi,\psi):=\int_{W_T}\left\|\nabla^F\psi\right\|^2_{\H^F}\d\mu^F,\quad\psi\in\F C^{OU}_b;
		\end{equation}

$(2)$ The O-U quadratic form $(\E_{OU},\F C_b^{OU})$
		\begin{equation}\label{e4.18}
			\E_{OU}(\phi,\phi):=\int_{W_T}\left\|\nabla\phi\right\|^2_{\H}\d\mu^F,\quad\phi\in\F C_b^{OU};
		\end{equation}

$(3)$ The $L^2$ quadratic form $(\E_{L^2},\F C_b^{L^2})$
		\begin{equation}\label{e4.19}
			\E_{L^2}(\varphi,\varphi):=\int_{W_T}\left\|D\varphi\right\|^2_{L^2}\d \mu^F, \quad\varphi\in\F C^{L^2}_b.
		\end{equation}
	\end{defn}

	\begin{thm}\label{thm4.8}
		Assume that $F(t,s)=f(t)\hat{F}(t,s)\quad t\geq s\ge0,$	
		where $\hat{F}$ satisfies conditions {\bf (A1), (A2)} and {\bf (A4)}, and $f$ with $f,\frac{1}{f}\in L^2([0,T])$. Then $(\E,\F C^{OU}_b)$, $(\E_{OU},\F C_b^{OU})$ and
		$(\E_{L^2},\F C_b^{L^2})$ are closable and their closures $(\E,\D(\E))$, $(\E_{OU},\D(\E_{OU}))$ and
		$(\E_{L^2},\D(\E_{L^2}))$ are quasi-regular Dirichlet forms.
	\end{thm}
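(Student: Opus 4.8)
The plan is to treat the three forms in parallel, verifying for each the three ingredients that characterize a quasi-regular Dirichlet form in the sense of Ma--R\"ockner: closability (so that the closure is a well-defined symmetric closed form), the Markovian property (so that the closure is a Dirichlet form), and the structural conditions (QR1)--(QR3). Throughout I would use that $W_T=C([0,T];\R)$ is a separable Banach, hence Polish, space, that $\mu^F$ is a centered Radon Gaussian measure on it (since $B^F$ is a centered Gaussian process with continuous paths), and that $\H^F$ is precisely its Cameron--Martin space, which is exactly the content of the quasi-invariance Theorem \ref{thm4.1}.

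First, for closability I would exploit the integration by parts formula together with the relations among the three gradients. For the damped form, Theorem \ref{thm4.3} already supplies $\int D_{h^F}G\,\d\P=\int G\,\beta_{h^F}\,\d\P$ along the Cameron--Martin directions $h^F\in\H^F$; this exhibits a densely defined adjoint on finitely based vector fields, and by the standard criterion that a gradient with densely defined adjoint is closable, it shows $(\E,\F C_b^{OU})$ is closable. For $(\E_{OU},\F C_b^{OU})$ and $(\E_{L^2},\F C_b^{L^2})$ I would either derive the corresponding integration by parts formulas by differentiating the quasi-invariance identity of Theorem \ref{thm4.1} in $\varepsilon$, or transfer closability from $\E$ by means of the identities of Remark \ref{rem4.6}, which relate $\nabla^F G$, $\nabla G$ and $DG$ through the bounded operators $K_F$, $K_F^*$ and $R_F=K_F\circ K_F^*$. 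Each gradient, viewed as an operator from $L^2(\mu^F)$ into $L^2(\mu^F;H)$ with $H=\H^F,\H,L^2([0,T];\R)$ respectively, is then closable.

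Second, for the Markovian property I would apply to a cylinder function $\psi$ a smooth normal contraction $\phi_\varepsilon$ approximating $t\mapsto (0\vee t)\wedge 1$. The chain rule for each gradient gives $\nabla^F(\phi_\varepsilon\circ\psi)=\phi_\varepsilon'(\psi)\,\nabla^F\psi$, and likewise for $\nabla$ and $D$; since $|\phi_\varepsilon'|\le 1$ this yields $\E(\phi_\varepsilon\circ\psi,\phi_\varepsilon\circ\psi)\le\E(\psi,\psi)$, and passing to the limit shows that unit contractions operate. Thus each closure is a Dirichlet form.

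Finally, for quasi-regularity I would verify (QR1)--(QR3). Conditions (QR2) and (QR3) are the soft part: the bounded Lipschitz cylinder functions are $\mu^F$-continuous, hence quasi-continuous, and dense, which gives (QR2), while for (QR3) the coordinate evaluations $\gamma\mapsto\gamma(t_j)$ over a countable dense set $\{t_j\}\subset[0,T]$ lie in the domain and separate the points of $W_T$. The genuinely hard part is (QR1), the existence of an $\E$-nest of compact sets; here I would invoke the tightness of the Sobolev capacities associated with a Gaussian measure on a separable Banach space, which produces the required compact nest from the concentration of $\mu^F$ together with capacitary estimates, and I would transport this nest to $\E_{OU}$ and $\E_{L^2}$ using that comparable forms share nests. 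I expect (QR1) to be the main obstacle, since it is the only condition that uses the Gaussian and compactness structure rather than mere continuity and separability; it is especially delicate for $\E_{L^2}$, whose direction space is the whole of $L^2([0,T];\R)$ rather than a Cameron--Martin space, so that establishing tightness of the associated capacity requires extra care.
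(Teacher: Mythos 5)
Your closability argument is essentially the paper's: the paper runs the standard Cauchy-sequence version of the same idea, using the integration by parts formula of Theorem \ref{thm4.3} to show that if $\psi_n\to0$ in $L^2(\mu^F)$ and $\nabla^F\psi_n\to\Psi$ in $L^2(\mu^F;\H^F)$, then $\langle\Psi,h_k^F\rangle_{\H^F}$ integrates to zero against every cylinder function, whence $\Psi=0$. Your packaging via ``densely defined adjoint implies closable'' is the same mechanism. Your extra paragraph on normal contractions (the Markov property) is standard and the paper omits it; that is not an issue. The soft parts of quasi-regularity (density of the core, separation of points) also match, except for one caveat noted below.

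The genuine divergence, and the gap, is in your treatment of (QR1). The paper does \emph{not} invoke an abstract tightness theorem for Gaussian capacities; it constructs the compact nest explicitly in the Driver--R\"ockner style: for $\gamma\in W_T$ it shows the sup-distance $F_\gamma(\sigma)=\sup_{t}|\gamma_t-\sigma_t|$ lies in $\D(\E)$ with $\E(F_\gamma,F_\gamma)\le C$ uniformly in $\gamma$ (by approximating with $\max_{k\le n}|\gamma(t_k)-\sigma(t_k)|$ and the bound $\|\nabla^F F_{\gamma,k}\|_{\H^F}^2\le T$), then takes $F_n=\inf_{k\le n}F_{\gamma_k}$ over a countable dense set, applies the Ces\`aro-mean/quasi-uniform-convergence lemma of Ma--R\"ockner to get an $\E$-nest on which $F_n\downarrow0$ uniformly, and concludes each set in the nest is totally bounded, hence compact. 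Your alternative --- cite tightness of Gaussian Sobolev capacities for $\E$ and then ``transport the nest to $\E_{OU}$ and $\E_{L^2}$ using that comparable forms share nests'' --- fails at the transport step. The comparisons actually available (and proved in Section 5) are $\E\le C\,\E_{OU}$ and $\E\le C\,\E_{L^2}$ on the respective cores; these give $\Cap_{\E}\le C'\Cap_{\E_{OU}}$ and $\Cap_{\E}\le C'\Cap_{\E_{L^2}}$, so an $\E_{OU}$-nest or $\E_{L^2}$-nest is an $\E$-nest, not the other way around. The reverse comparison $\E_{OU}\le C\,\E$ would require $K_F^{-1}$ to be bounded on the relevant spaces, which is false in the interesting cases (e.g. fBm with $H>1/2$, where $K_F$ is smoothing). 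So a compact $\E$-nest yields nothing for the other two forms, and each form needs its own construction (which is what the paper's method delivers, since the uniform energy bound on the distance functions can be checked separately for each gradient). A second, smaller problem: for $(\E_{L^2},\F C_b^{L^2})$ the coordinate evaluations $\gamma\mapsto\gamma(t_j)$ are \emph{not} in the core and are not $L^2([0,T];\R)$-differentiable (point evaluation is unbounded on $L^2$), so both your point-separating family for (QR3) and any distance function built from point evaluations must be replaced by integral functionals of the type appearing in \eqref{e4.14}; you flag the $L^2$ case as delicate but do not supply the substitute construction.
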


	\begin{Remark}\label{rem4.9}
		$(1)$ When the based space is a compact Riemannian manifold, Driver-R\"{o}ckner \cite{DR} obtained quasi-regular O-U Dirichlet forms for Wiener measure on the associated path space; L\"{o}bus \cite{L} discussed a class of processes on the path space over a compact Riemannian manifold with unbounded diffusion; Wang-Wu \cite{WW} extended Driver-R\"{o}ckner's result to noncompact Riemannian manifold with the integrable condition for Ricci curvature; Chen-Wu \cite{CW} further shew that conclusions hold for stochastically complete Reimannian manifold without any curvature conditions.

		$(2)$ R\"{o}ckner-Wu-Zhu-Zhu \cite{RMZZ} established quasi-regular $L^2$ Dirichlet forms for Wiener measure on the path space over a compact manifold; Chen-Wu-Zhu-Zhu \cite{CWZZ} extended to a noncompact Riemannian manifold.
	\end{Remark}
	
	{\bf Proof of Theorem \ref{thm4.8}}
		Here we only present the proof of the damped case(for the O-U case and the $L^2$-case, we may prove the result similarly). The proof and notations mainly refer to \cite{CWZZ} (see also \cite{MR}, \cite{RMZZ}, \cite{WW} and \cite{WW2}).
		
		{\bf Closability:} Let $\{\psi_n\}_{n\ge1}\subset\F C_b^{OU}$ be a sequence of cylinder functions such that
		$$\lim\limits_{n\rightarrow\infty}\int_{\Omega}\psi_n^2(B^F(\omega))\d\P=0$$
		and
		$$\lim\limits_{n\rightarrow\infty}\E(\psi_n-\psi_m,\psi_n-\psi_m)=\lim\limits_{n\rightarrow\infty}
		\int_\Omega\|\nabla^F\psi_n-\nabla^F\psi_n\|^2_{\H^F}(B^F(\omega))\d\P=0.$$
		Then $\{\nabla^F\psi_n\circ B^F\}$ is a Cauchy sequence in $L^2(\Omega\rightarrow \H^F;\P)$. Thus there exists a limit $\Psi\in L^2(\Omega\rightarrow \H^F;\P)$ such that
		$$\lim\limits_{n\rightarrow\infty}\nabla^F\psi_n\circ B^F=\Psi------L^2-\text{sense}.$$ We only need to prove that $\Psi=0$. By theorem \ref{thm4.3}, we know that
		$$\int_{\Omega}D_{h^F}G(B^F(\omega))\cdot\d\P=\int_{\Omega}G(B^F(\omega))\cdot\beta_{h^F}(\omega)\d\P$$
		for $G\in\F C_b^{OU}$. Let $\{h_k^F\}_{k\ge1}\in\H^F$ be an orthonormal bases in $\H^F$, then for any $G\in\F C_b^{OU}$, we have
		\begin{equation}\label{e4.20}
			\aligned
			&\int_{\Omega}G(B^F(\omega))D_{h_k^F}\psi_n(B^F(\omega))\d\P\\
			=&\int_{\Omega}D_{h_k^F}(\psi_n\cdot G)(B^F(\omega))\d\P-\int_{\Omega}\psi_n(B^F(\omega))D_{h_k^F}G(B^F(\omega))\d\P\\
			=&\int_{\Omega}D_{h_k^F}\psi_n(B^F(\omega)\cdot G(B^F(\omega))\d\P-\int_{\Omega}\<\nabla^FG(B^F(\omega)),h_k^F\>_{\H^F}\psi_n(B^F(\omega))\d\P\\
			=&\int_{\Omega}\psi_n(B^F(\omega)\cdot G(B^F(\omega))\beta_{h_k^F}(\omega)\d\P-\int_{\Omega}\<\nabla^FG(B^F(\omega)),h_k^F\>_{\H^F}\psi_n(B^F(\omega))\\
			=&\int_{\Omega}\psi_n(B^F(\omega))\left(G(B^F(\omega))\beta_{h_k^F}(\omega)-\<\nabla^FG(B^F(\omega)),h_k^F\>_{\H^F}\right)\d\P.
			\endaligned
		\end{equation}
		Since $G(B^F(\omega))$, $\nabla^FG(B^F(\omega))$ and $\beta_{h_k^F}$ are $L^2$-integratable, thus by the dominated convergence theorem, we get
		\begin{equation}\label{e4.21}
			\aligned
			&\int_{\Omega}\<\Psi(B^F(\omega)),h_k^F\>_{\H^F}G(B^F(\omega))\d\P\\&=\lim\limits_{n\rightarrow\infty}\int_{\Omega}\<\nabla^F\psi_n(B^F(\omega)),h_k^F\>_{\H^F}G(B^F(\omega))\d\P\\
			&=\lim\limits_{n\rightarrow\infty}\int_{\Omega}G(B^F(\omega))D_{h_k^F}\psi_n(B^F(\omega))\d\P\\
			&=\lim\limits_{n\rightarrow\infty}\int_{\Omega}\psi_n(B^F(\omega))\left(G(B^F(\omega))\beta_{h_k^F}(\omega)-\<\nabla^FG(B^F(\omega)),h_k^F\>_{\H^F}\right)\d\P=0\endaligned
		\end{equation}
		for any $G\in\F C_b^{OU}$ and $k\ge1$. This implies that $\Psi\equiv0$.
		
		\textbf{Quasi-regularity:} It is obvious that $\F C_b^{OU}$ is dense in $\D(\E)$ under the $(\E)_1^{1/2}$-norm, and has a countable dense subset separating points on $W_T$. Then, by \cite[Definition IV-3.1]{MR}, to verify the quasi-regularity of $(\scr E, \scr D(\scr E))$ it suffices to find out a sequence of compact sets $\{K_n\}\subset W_T$ such that
		$$\displaystyle\lim_{n\to\infty}\Cap(W_T\backslash K_n)=0,$$
		where $\Cap$ is the capacity induced by $(\E,\D(\E))$.

		$(1)$ We intend to prove that for some constant $C>0$ and any $\gamma\in W_T$, the function
		$$F_\gamma(\sigma):=\displaystyle\sup_{t\in[0,T]}|\gamma_t-\sigma_t|,\quad \sigma\in W_T$$ is in $\D(\E)$ and
		\begin{equation}\label{c2.7}
			\E(F_\gamma,F_\gamma)\leq C.
		\end{equation}
		Let $\{t_k\}_{k=1}^\infty$ be a countable dense subset of $[0,T]$. For each $\gamma\in W_T$, define
		$$F_{\gamma,k}(\sigma):=|\gamma(t_k)-\sigma(t_k)|,\quad F_\gamma^{(n)}:=\max_{1\leq k\leq n}F_{\gamma,k},\quad k,n\in\N,\ \sigma\in W_T.$$
		According to \eqref{e4.15} we derive
		$$\|\nabla^FF_{\gamma,k}\|_{\H^F}^2=\|\nabla F_{\gamma,k}\|_{\H}^2\le T.$$
		Then for each $k\ge1$,
		$$\E(F_{\gamma,k},F_{\gamma,k})=\int_{W_T}\|\nabla^FF_{\gamma,k}\|_{\H^F}^2\d\mu^F\le T.$$
		\iffalse Since $\varphi$ is isometric, the vector $v_{i,k}(\si):=u_{t_k}^{-1}\nn \varphi_i(\si_{t_k})$ satisfies $|v_{i,k}(\si)|\le1,\ 1\le i\le N, \si\in W_o, k\ge 1.$ Moreover, (2.6) implies $\varphi_{i,k}\in L^2(\mu).$ Then it follows from $(A1)$ that $\varphi_{i,k}\in \D(\E_\mathbf{A})$ and
		\begin{equation*}
			\E_\mathbf{A}(\varphi_{i,k},\varphi_{i,k})=\|\mathbf A^{1/2}
			\Phi_{t_k,v_{i,k}}\|_{L^2(W_o\to\H;\mu)}^2\leq C,\quad 1\leq k,
			1\leq i\leq N.
		\end{equation*}\fi
		Thus, for any $n\geq1$,
		$$\E(F_\gamma^{(n)},F_\gamma^{(n)})\le\max_{1\le k\le n}\E(F_{\gamma,k},F_{\gamma,k})\le T,$$
		Since $F_\gamma^{(n)}\uparrow F_\gamma$ as $n\uparrow\infty$ and $F_\gamma\in L^2(W_T,\mu^F)$, it follows \cite[Lemma I-2.12]{MR} that $F_\gamma\in \D(\E)$ and \eqref{c2.7} holds as we select $C\equiv T$.

		(2) Now let $\{\gamma_k:k\geq1\}$ be a countable dense subset of $W_T$. For any $n\geq1$, let
		$$F_n:=\inf_{1\leq k\leq n}F_{\gamma_k}.$$
		Similar to the above argument we obtain from (\ref{c2.7}) that $\{F_n\}\subset\D(\E)$ and $$\E(F_n,F_n)\leq C,\quad n\ge1.$$
		Since the sequence $\{F_n\}_{n\in \mathbb{N}}$ satisfies $F_n\geq0, F_n\geq F_{n+1}, n\in \mathbb{N}$ and $F_1\in L^2(W_T,\mu^F)$, we obtain
		$$\|F_n\|_{L^2(W_T,\mu^F)}\le\|F_1\|_{L^2(W_T,\mu^F)},\quad n\geq1.$$
		Therefore,
		$$\E(F_n,F_n)\leq C+\|F_1\|_{L^2(W_T,\mu^F)}=:C_2,\quad n\geq1.$$
		Since $\{\gamma_i:i\geq1\}\subset W_T$ is a dense subset, we have $F_n\downarrow0$ as $n\uparrow\infty$.
		
		(3) Now, following the line of \cite{DR} (see pages 606 and 607 therein), we need to find out a sequence of closed subsets $\{K_m\subset W_T:m\in\N\}$, such that $F_n\rightarrow0$ uniformly on every $K_m$ and $\lim\limits_{m\rightarrow\infty}\Cap(W_T\backslash K_m)=0$. Since the sequence $\{F_n\}$ is uniformly bounded in (2), there exists a subsequence of $\{F_n\}$ (denoted again by $\{F_n\}$) such that
		$$\E(\bar{F_n},\bar{F_n})\rightarrow0,\quad n\uparrow\infty,$$
		where $\bar{F_n}:=\frac{1}{n}\sum_{i=1}^n F_i, n\in\N$.
		According to \cite[Propositions 3.5]{MR}, there exists a subsequence of $\{\bar{F_n}\}$ (denoted again by $\{\bar{F_n}\}$) and an $\E$-nest $\{K_m\}, m\in\N$, such that on every $K_m$, $\bar{F_n}$ converges uniformly to zero as $n\uparrow\infty$. Since $\{F_n\}$ is a decreasing sequence, $F_n\leq\bar{F_n}$. So $F_n\rightarrow0$ uniformly on every $K_m$ as well. As in \cite{RS}, proof of Proposition 3.1, or in \cite{DR}, proof of Proposition 5, it follows now from the definition of $F_n$ that every $K_m$ is totally bounded. Thus, $\{K_m\}$ is an $\E$-nest consisting of compact sets, which is our desired sequence. $\hfill\square$

	\section{Logarithmic Sobolev inequalities}
	In this section, we will establish Logarithmic Sobolev inequalities with respect to Dirichlet forms $(\E,\D(\E))$, $(\E_{OU},\D(\E_{OU}))$ and $(\E_{L^2},\D(\E_{L^2}))$ respectively.
	First we will present the Clark-Ocone-Haussmann formula for the Gaussian process $B^F$. Let $\F_t^F$ be the $\sigma$-algebra generated by $B^F$, then we know that
	$$\F_t^F:=\sigma\big(B_s^F,s\le t\big)=\sigma\big(B_s,s\le t\big)=\F_t.$$
	
	\begin{thm}{\bf [Clark-Ocone-Haussmann formula]}\label{thm5.1}
		Assume that $F$ satisfies {\bf (A1), (A2)} and {\bf (A4)}, then we have
		\begin{equation}\label{e5.1}
			G\big(B^F\big)=\EE\big[G\big(B^F\big)\big]+\int_0^TH_t^G\d B_t^F,\quad~G\in\D(\E)),
		\end{equation}
		where
		\begin{equation}\label{e5.2}
			H_t^G=(K_F^*)^{-1}\EE\big[K_F^{-1}\nabla^FG\big(B^F\big)(t)|\F_t^F\big].
		\end{equation}
	\end{thm}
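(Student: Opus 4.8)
The plan is to reduce the statement to the classical Clark--Ocone--Haussmann formula for the standard Brownian motion and then transport it through the operators $K_F$, $K_F^*$ and $(K_F^*)^{-1}$. By \eqref{e3.35} the map $B\mapsto B^F$ is linear, so $G(B^F)$ is a genuine functional of the underlying Brownian path $B$; writing $D_s^B$ for the Malliavin derivative with respect to $B$, the classical formula states that whenever $G(B^F)$ lies in the Brownian Malliavin--Sobolev space one has
\begin{equation*}
G(B^F)=\EE\big[G(B^F)\big]+\int_0^T\EE\big[D_s^B G(B^F)\,\big|\,\F_s\big]\,\d B_s .
\end{equation*}
The whole proof then amounts to (i) identifying $D_s^B G(B^F)$ with $K_F^{-1}\nabla^F G(B^F)(s)$, and (ii) converting the It\^o integral into a $B^F$-integral by Theorem \ref{thm3.12}.

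First I would verify the operator identity on cylinder functions. For $G\in\F C_b^{OU}$ with $G(\gamma)=g(\gamma_{t_1},\dots,\gamma_{t_m})$, the chain rule together with $B_{t_i}^F=\int_0^{t_i}F(t_i,s)\,\d B_s$ gives $D_s^B G(B^F)=\sum_{i=1}^m\partial_i g\,F(t_i,s)1_{[0,t_i]}(s)$. On the other hand, Remark \ref{rem4.6} yields $\nabla^F G=R_F\big(\sum_{i=1}^m 1_{[0,t_i]}\partial_i g\big)$ with $R_F=K_F\circ K_F^*$, whence by the invertibility in Theorem \ref{thm3.2}
\begin{equation*}
K_F^{-1}\nabla^F G(s)=K_F^*\Big(\sum_{i=1}^m 1_{[0,t_i]}\partial_i g\Big)(s)=\sum_{i=1}^m\partial_i g\,F(t_i,s)1_{[0,t_i]}(s),
\end{equation*}
where the last equality uses $K_F^*1_{[0,t_i]}(s)=F(t_i,s)1_{[0,t_i]}(s)$ exactly as in \eqref{e3.35}. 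Thus $D_s^B G(B^F)=K_F^{-1}\nabla^F G(B^F)(s)$ pathwise on cylinder functions.

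Next I would change the integrator. Since $\F_s^F=\F_s$, the previous step identifies the It\^o integrand as the adapted process $\EE[K_F^{-1}\nabla^F G(B^F)(s)\,|\,\F_s]\in L^2([0,T];\R)$. By Theorem \ref{thm3.5}, $K_F^*(K_F^*)^{-1}=\Id$ on $L^2([0,T];\R)$; hence, setting
\begin{equation*}
H^G:=(K_F^*)^{-1}\EE\big[K_F^{-1}\nabla^F G(B^F)(\cdot)\,\big|\,\F_\cdot^F\big]\in\scr{H}^F ,
\end{equation*}
we have $K_F^*H^G(s)=\EE[K_F^{-1}\nabla^F G(B^F)(s)\,|\,\F_s]$, so Theorem \ref{thm3.12} converts the It\^o integral into $\int_0^T H^G\,\d B_s^F$. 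This is exactly \eqref{e5.1}--\eqref{e5.2}. The passage from $\F C_b^{OU}$ to an arbitrary $G\in\D(\E)$ then follows by density, using the boundedness of $K_F^{-1}$ and $(K_F^*)^{-1}$, the contractivity of conditional expectation, and the It\^o isometry to render both sides continuous in the $\E_1^{1/2}$-norm.

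The delicate point is the interplay between adaptedness and the anticipating operator $(K_F^*)^{-1}$, whose representation \eqref{e3.9} integrates over $[t,T]$ and thus produces a non-adapted integrand $H^G$. This causes no contradiction: the $B^F$-integral of Definition \ref{def3.11} is defined for every $\psi\in\scr{H}^F$ with no adaptedness requirement, and the content of Theorem \ref{thm3.12} is precisely that it coincides with the It\^o integral of the adapted process $K_F^*H^G$, the cancellation $K_F^*(K_F^*)^{-1}=\Id$ restoring adaptedness. The remaining burden, which I expect to be the main technical obstacle, is to show that $G\in\D(\E)$ forces $G(B^F)$ into the classical Malliavin--Sobolev space with $D^B G(B^F)=K_F^{-1}\nabla^F G$; this identification of the damped Sobolev structure of $(\E,\D(\E))$ with the Brownian one, guaranteed by the isometries built into $K_F$ and $K_F^*$, is what legitimizes applying the classical Clark--Ocone formula to every $G\in\D(\E)$.
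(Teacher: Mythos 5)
Your proposal is correct and follows essentially the same route as the paper: reduce to the classical Clark--Ocone formula for the underlying Brownian motion, identify the Brownian Malliavin derivative of $G(B^F)$ as $K_F^*(DG)(B^F)=K_F^{-1}\nabla^F G(B^F)$ on cylinder functions (you do this by an explicit chain-rule computation, the paper by the duality pairing $\<D(G\circ\Phi)(B),\dot h\>_{L^2}=\<K_F^*(DG)(B^F),\dot h\>_{L^2}$ --- the same calculation), convert the It\^o integral into the $B^F$-integral via Theorem \ref{thm3.12}, and conclude by density. Your closing remarks on adaptedness versus the anticipating operator $(K_F^*)^{-1}$ and on the continuity needed for the density step are in fact more explicit than what the paper records.
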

	
	\begin{proof}Since $\F C_{b}^{OU}$ is dense in $\D(\E)$, it is enough to prove that \eqref{e5.1} holds for each $G\in \F C_{b}^{OU}$.
		By the classical Clark-Ocone formula of the Brownian motion $B_t$, we know that
		\begin{equation}\label{e5.3}
			G\big(B\big)=\EE\big[G\big(B\big)\big]+\int_0^T\EE\big[DG\big(B\big)|\F_t\big]\d B_t(\omega),\quad\forall~G\in\D.
		\end{equation}
		By the definition of $B^F$, we know that $B^F$ may be looked as the functional of $B$, i.e. $B^F_\cdot(\omega)=\int^\cdot_0F(\cdot,s)\d B_s(\omega):=\Phi\circ B(\omega)$. Then $\Phi\in \D$. Now replace $G$ by $G\circ\Phi$ in \eqref{e5.3}, we have
		\begin{equation}\label{e5.4}
			G\circ\Phi\big(B\big)=\EE\big[(G\circ\Phi)\big(B\big)\big]+\int_0^T\EE\big[D(G\circ\Phi)\big(B\big)|\F_t\big]\d B_t.
		\end{equation}
		Notice that for each $h\in \H$,
		$$\aligned
		\big\<D(G\circ\Phi)(B),\dot{h}\big\>_{L^2[0,T]}&=D_{h}(G\circ\Phi)(B)=D_{K_F\dot{h}}G(\Phi(B))\\&=D_{K_F\dot{h}}G(B^F)
		=\big\<\nabla G(B^F),K_F\dot{h}\big\>_{\H}\\
		&=\int_0^TDG(B^F)\d K_F\dot{h}(t)\\
		&=\big\<K_F^*(DG)(B^F),\dot{h}\big\>_{L^2[0,T]}.
		\endaligned$$
		Since the arbitrariness of $h$, we obtain
		\begin{equation}\label{e5.5}
			D(G\circ\Phi)(B)=K_F^*(DG)(B^F).
		\end{equation}
		By remark \ref{rem4.6}, we have
		$$\aligned
		&G\big(B^F\big)=G\circ\Phi\big(B\big)\\
		=&\EE\big[G\circ\Phi\big(B\big)\big]+\int_0^T\EE\big[D(G\circ\phi)\big(B\big)(t)|\F_t\big]\d B_t\\
		=&\EE\big[G\big(B^F\big)\big]+\int_0^T\EE\big[K_F^*(DG)\big(B^F\big)(t)|\F_t^F\big]\d B_t\\
		=&\EE\big[G\big(B^F\big)\big]+\int_0^T(K_F^*)^{-1}\EE\big[K_F^*(R_F)^{-1}(\nabla^FG)\big(B^F\big)(t)|\F_t^F\big]\d B_t^F\\
		=&\EE\big[G\big(B^F\big)\big]+\int_0^T(K_F^*)^{-1}\EE\big[K_F^{-1}(\nabla^FG)\big(B^F\big)(t)|\F_t^F\big]\d B_t^F.
		\endaligned$$
		Thus we finish the proof.
	\end{proof}

	By theorem \ref{thm5.1}, we may derive the following damped Logarithmic Sobolev inequality.

	\begin{thm}{\bf [Damped Logarithmic Sobolev inequality]}\label{thm5.2}
		Assume that $F$ satisfies {\bf (A1), (A2)} and {\bf (A4)}, we have
		\begin{equation}\label{e5.6}
			\Ent_{\mu^F}(G^2):=\EE_{\mu^F}\left(G^2\log\frac{G^2}{\EE_{\mu^F}(G^2)}\right)\le 2\E(G,G),\quad G\in\D(\E).
		\end{equation}
	\end{thm}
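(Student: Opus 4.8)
The plan is to run the Clark--Ocone martingale argument of Capitaine--Hsu--Ledoux, adapted to the $F$-Gaussian setting through Theorem \ref{thm5.1}. Since $\F C_b^{OU}$ is dense in $\D(\E)$ under the graph norm, I would first reduce to $G\in\F C_b^{OU}$, and in order to keep $\log$ well defined I would work with $G^2+\vv$ for $\vv>0$ and let $\vv\downarrow0$ at the end. Using $\F_t^F=\F_t$, introduce the positive martingale $M_t:=\EE[G^2(B^F)\,|\,\F_t]$. Applying the Clark--Ocone--Haussmann formula \eqref{e5.1}--\eqref{e5.2} to $G^2$, in its Brownian-integral form $G^2(B^F)=\EE[G^2]+\int_0^T K_F^*H_t^{G^2}\,\d B_t$ coming from Theorem \ref{thm3.12}, identifies the integrand of $M$ as
$$\d M_t=\beta_t\,\d B_t,\qquad \beta_t=\EE\big[K_F^{-1}\nabla^F(G^2)(B^F)(t)\,\big|\,\F_t\big].$$

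Next I would apply It\^o's formula to $\phi(M_t)$ with $\phi(x)=x\log x$. Since $\d\<M\>_t=\beta_t^2\,\d t$ and $\phi''(x)=1/x$, taking expectations (the $\d B_t$-term being a true martingale for bounded cylinder $G$) yields the entropy identity
$$\Ent_{\mu^F}(G^2)=\EE[M_T\log M_T]-\EE[M_T]\log\EE[M_T]=\frac12\,\EE\int_0^T\frac{\beta_t^2}{M_t}\,\d t,$$
using $M_T=G^2(B^F)$. By the chain rule $\nabla^F(G^2)=2G\,\nabla^F G$ and the linearity of $K_F^{-1}$ in the time variable, $\beta_t=2\,\EE[G(B^F)\,(K_F^{-1}\nabla^F G)(B^F)(t)\,|\,\F_t]$, so the conditional Cauchy--Schwarz inequality gives
$$\beta_t^2\le 4\,\EE[G^2\,|\,\F_t]\;\EE\big[|(K_F^{-1}\nabla^F G)(t)|^2\,\big|\,\F_t\big]=4M_t\,\EE\big[|(K_F^{-1}\nabla^F G)(t)|^2\,\big|\,\F_t\big].$$
Substituting into the entropy identity cancels $M_t$, and the tower property removes the conditioning, leaving $\Ent_{\mu^F}(G^2)\le 2\,\EE\int_0^T|(K_F^{-1}\nabla^F G)(t)|^2\,\d t$.

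Finally I would close the estimate with the isometry built into $\H^F$. Writing $\nabla^F G=K_F\dot\psi$ for $\psi\in\H$, one has $K_F^{-1}\nabla^F G=\dot\psi$ and, by \eqref{e3.4}, $\int_0^T|(K_F^{-1}\nabla^F G)(t)|^2\,\d t=\|\dot\psi\|_{L^2}^2=\|\nabla^F G\|_{\H^F}^2$. Hence $\Ent_{\mu^F}(G^2)\le 2\EE\|\nabla^F G\|_{\H^F}^2=2\E(G,G)$, which is \eqref{e5.6} after letting $\vv\downarrow0$ and extending from $\F C_b^{OU}$ to $\D(\E)$ by lower semicontinuity of entropy together with $\E$-convergence of approximating cylinder functions.

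The main obstacle will be the rigorous justification of the entropy--martingale identity rather than the algebra: one must ensure $M_t$ stays bounded away from $0$ (handled by the $\vv$-regularization), that $\EE\int_0^T\beta_t^2/M_t\,\d t<\infty$, and that the local-martingale part of $\phi(M_t)$ has mean zero, which for cylinder $G$ follows from the boundedness supplied by $\F C_b^{OU}$ and the $L^2$-boundedness of $(K_F^*)^{-1}$ under {\bf(A4)}. A secondary point needing care is the commutation of the conditional expectation $\EE[\cdot\,|\,\F_t]$ with the operator $K_F^{-1}$: since $K_F^{-1}$ acts only on the time variable while the conditioning acts on $\oo$, Fubini together with the explicit representation \eqref{e3.7} legitimizes the interchange.
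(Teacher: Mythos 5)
Your proposal follows essentially the same route as the paper's proof: the Clark--Ocone--Haussmann formula of Theorem \ref{thm5.1} in its Brownian-integral form, It\^o's formula applied to $x\log x$ of the martingale $M_t=\EE[\,\cdot\,|\F_t^F]$, conditional Cauchy--Schwarz to cancel $M_t$, and the isometry $\int_0^T|K_F^{-1}\nabla^FG|^2\,\d t=\|\nabla^FG\|_{\H^F}^2$. The only differences are cosmetic (you apply the formula directly to $G^2$ where the paper applies it to $G$ and substitutes $G^2$ at the Cauchy--Schwarz step) plus some extra technical care (the $\vv$-regularization and integrability checks) that the paper handles by simply assuming $G$ bounded and positive.
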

	
	\begin{proof}
		By the Clark-Ocone-Haussmann formula in theorem \ref{thm5.1}, we know that
		\begin{equation}\label{e5.7}
			\aligned
			&G\big(B^F\big)=\EE\big[G\big(B^F\big)\big]+\int_0^TH_t^G\d B_t^F\\
			=&\EE\big[G\big(B^F\big)\big]+\int_0^T\EE\big[K_F^{-1}(\nabla^FG)\big(B^F\big)(t)|\F_t^F\big]\d B_t.
			\endaligned
		\end{equation}
		Without loss of generality, we always assume that $G$ is bounded and $G>0$. Consider the right continuous version of martingale
		\begin{equation}\label{e5.8}
			M_t:=\EE\big[G\big(B^F\big)|\F_t^F\big], ~~\quad 0\le t\le T,
		\end{equation}
		then by \eqref{e5.7} we have
		\begin{equation*}
			M_t=\EE\big[G\big(B^F\big)\big]+\int_0^t\EE\big[K_F^{-1}(\nabla^FG)\big(B^F\big)(t)|\F_t^F\big]\d B_t
		\end{equation*}
		and
		\begin{equation}\label{e5.9}
			\d M_t=\EE\big[K_F^{-1}(\nabla^FG)\big(B^F\big)(t)|\F_t^F\big]\d B_t.
		\end{equation}
		Consider the function $f(x):=x\log x$. Then
		$$f'(x)=\log x+1,\quad f''(x)=x^{-1},\quad x>0.$$
		By the It\^{o}'s formula, we have
		\begin{equation}\label{e5.10}
			\aligned
			f(M_T)-f(M_0)&=\int_0^T(\log(M_t)+1)\d M_t\\
			&+\frac{1}{2}\int_0^T\frac{\left(\EE\big[K_F^{-1}(\nabla^FG)\big(B^F\big)(t)|\F_t^F\big]\right)^2}{M_t}\d t.
			\endaligned
		\end{equation}
		Taking expectation on both side in the above equation with respect to $\EE$, we get
		\begin{equation}\label{e5.11}
			\aligned
			&\EE\big[G(B^F)\log G(B^F)\big]-\EE\big[G(B^F)\big]\log \EE\big[G(B^F)\big]\\
			&=\frac{1}{2}\EE\bigg[\int_0^T\frac{\left(\EE\big[K_F^{-1}(\nabla^FG)\big(B^F\big)(t)|\F_t^F\big]\right)^2}{\EE\big[G\big(B^F\big)|\F_t^F\big]}\d t\bigg].
			\endaligned
		\end{equation}
		By using the Cauchy-Schwarz inequality and $G$ is replaced by $G^2$, we get
		$$\aligned
		&\left(\EE\big[K_F^{-1}(\nabla^FG^2)\big(B^F\big)(t)|\F_t^F\big]\right)^2 \\&
		=\left(\EE\big[2G\big(B^F\big) K_F^{-1}(\nabla^FG)\big(B^F\big)(t)|\F_t^F\big]\right)^2\\&
		\le4\EE\big[G\big(B^F\big)^2|\F_t^F\big]\EE\big[(K_F^{-1}(\nabla^FG)\big(B^F\big)(t))^2|\F_t^F\big]
		\endaligned$$
		Therefore
		$$\aligned
		\Ent_{\mu^F}(G^2)=&\Ent_{\P}\big[G\big(B^F(\omega)\big)^2\big]\\
		\le&2\EE\int_0^T\EE\big[(K_F^{-1}(\nabla^FG)\big(B^F(\omega)\big)(t))^2|\F_t^F\big]\d t\\
		\le&2\EE\int_0^T(K_F^{-1}(\nabla^FG)\big(B^F(\omega)\big)(t))^2\d t\\
		=&2\EE\left\|(\nabla^FG)\big(B^F(\omega)\big)\right\|_{\H^F}^2=2\EE_{\mu^F}\left\|(\nabla^FG)\right\|_{\H^F}^2.
		\endaligned$$
	\end{proof}
	
	From the damped logarithmic Sobolev inequality in Theorem \ref{thm5.2}, we may derive the following Logarithmic Sobolev inequality with respect to Ornstein-Uhlenbeck Dirichlet form $(\E_{OU}, \D(\E_{OU}))$ and $L^2$-Dirichlet form $(\E_{L^2}, \D(\E_{L^2}))$  respectively.
	
	\begin{thm}{\bf [Logarithmic Sobolev inequality(O-U case)]}
		Under the same conditions in Theorem \ref{thm5.2}, for any $\delta>0$ we have
		\begin{equation}\label{e5.12}
			\Ent_{\mu^F}(G^2)\le C\E_{OU}(G,G),\quad \forall~G\in\D(\E_{OU}),
		\end{equation}
	where
		\begin{equation}\label{e5.13}
			C=2(1+\delta)\sup_{t\in[0,T]}F(t,t)+2\frac{1+\delta}{\delta}\left\|\frac{\partial F(t,s)}{\partial t}\right\|_{L^2([0,T]^2;\R)}.
		\end{equation}
	\end{thm}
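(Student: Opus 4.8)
The plan is to deduce the O--U logarithmic Sobolev inequality from the damped one in Theorem \ref{thm5.2} by comparing, pointwise in $\gamma$, the damped gradient norm $\|\nabla^F G\|_{\H^F}$ with the Malliavin gradient norm $\|\nabla G\|_{\H}$, and then controlling the resulting operator on $L^2$. Since $\F C_b^{OU}$ is dense in $\D(\E_{OU})$ and the entropy functional is lower semicontinuous, it suffices to prove the inequality for $G\in\F C_b^{OU}$ and afterwards pass to the closure by a standard approximation argument.

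First I would fix $G=g(\gamma_{t_1},\dots,\gamma_{t_m})\in\F C_b^{OU}$ and set $u:=\sum_{i=1}^m 1_{[0,t_i]}\partial_i g$, which for each $\gamma$ is an element of $L^2([0,T];\R)$. By \eqref{e4.15} one has $\nabla^F G=R_F u=K_F K_F^* u$; writing $\nabla^F G=K_F\dot a$ with $\dot a=K_F^* u$ and unwinding the defining isometry $\|K_F\dot a\|_{\H^F}=\|\dot a\|_{L^2}$ gives the pointwise identity $\|\nabla^F G\|_{\H^F}^2=\|K_F^* u\|_{L^2([0,T];\R)}^2$. On the other hand, testing the directional derivative $D_h G=\sum_i\partial_i g\,h(t_i)=\<u,h'\>_{L^2}$ against $h\in\H$ shows $(\nabla G)'=u$, so that $\|\nabla G\|_{\H}^2=\|u\|_{L^2([0,T];\R)}^2$. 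Thus the entire problem reduces to bounding $K_F^*$ as an operator on $L^2([0,T];\R)$: passing from the damped form to the O--U form is exactly the insertion of $K_F^*$.

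Next I would estimate $\|K_F^* u\|_{L^2}$ from the explicit formula \eqref{e3.5}, splitting $K_F^*=M+N$ into the diagonal multiplication part $(Mu)(s)=F(s,s)u(s)$ and the integral part $(Nu)(s)=\int_s^T\frac{\partial F(t,s)}{\partial t}u(t)\,\d t$. The first satisfies $\|Mu\|_{L^2}\le\big(\sup_{s\in[0,T]}F(s,s)\big)\|u\|_{L^2}$, while $N$ is a Hilbert--Schmidt operator with kernel $1_{\{s\le t\}}\frac{\partial F}{\partial t}(t,s)$, whence $\|Nu\|_{L^2}\le\big\|\frac{\partial F}{\partial t}\big\|_{L^2([0,T]^2;\R)}\|u\|_{L^2}$. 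Applying Young's inequality $(a+b)^2\le(1+\delta)a^2+\frac{1+\delta}{\delta}b^2$ then yields
$$\|K_F^* u\|_{L^2}^2\le\Big[(1+\delta)\big(\sup_{s\in[0,T]}F(s,s)\big)^2+\tfrac{1+\delta}{\delta}\big\|\tfrac{\partial F}{\partial t}\big\|_{L^2([0,T]^2;\R)}^2\Big]\|u\|_{L^2}^2,$$
so that after multiplying by the factor $2$ from Theorem \ref{thm5.2} the coefficients $2(1+\delta)$ and $2\frac{1+\delta}{\delta}$ are exactly those appearing in the constant \eqref{e5.13}.

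Finally, integrating the pointwise bound against $\mu^F$ and chaining with Theorem \ref{thm5.2} gives
$$\Ent_{\mu^F}(G^2)\le 2\int_{W_T}\|\nabla^F G\|_{\H^F}^2\,\d\mu^F=2\int_{W_T}\|K_F^* u\|_{L^2}^2\,\d\mu^F\le C\int_{W_T}\|u\|_{L^2}^2\,\d\mu^F=C\,\E_{OU}(G,G),$$
and the density together with the lower semicontinuity of the entropy extends this to all $G\in\D(\E_{OU})$. I expect the main obstacle to be the pointwise identity $\|\nabla^F G\|_{\H^F}^2=\|K_F^* u\|_{L^2}^2$: it is precisely what converts the qualitative relationship between the two gradients into a quantitative, $F$-explicit operator bound, and it hinges on correctly disentangling the isometry built into $\<\cdot,\cdot\>_{\H^F}$ in combination with $R_F=K_F\circ K_F^*$ and the representation \eqref{e4.15}. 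Once this identity is secured, the Hilbert--Schmidt estimate, Young's inequality, and the closure argument are all routine.
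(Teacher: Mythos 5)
Your proof is correct and follows essentially the same route as the paper: both reduce the comparison of $\E$ and $\E_{OU}$ to an $L^2$ operator-norm bound for the same operator (you bound $K_F^*$ directly on $L^2$, while the paper bounds its adjoint $\dot h\mapsto \frac{\d}{\d t}K_F\dot h$ through the variational quantity $\sup_{\|h^F\|_{\H^F}\le1}\|h^F\|_{\H}^2$), and both then split off the diagonal part $F(t,t)$ from the Hilbert--Schmidt integral part and apply Young's inequality with the parameter $\delta$. One small remark: your computation produces the constant with $\bigl(\sup_{t\in[0,T]}F(t,t)\bigr)^2$ and $\bigl\|\frac{\partial F(t,s)}{\partial t}\bigr\|_{L^2([0,T]^2;\R)}^2$, which is the dimensionally correct form; the squares appear to have been dropped in the paper's displayed constant \eqref{e5.13}.
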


	\begin{proof}
		By Theorem \ref{thm5.2}, we know that
		\begin{equation}
			\Ent_{\mu^F}(G^2)\le2\E(G,G)=2\int_{W_T}\left\|\nabla^FG\right\|_{\H^F}^2\d \mu^F.
		\end{equation}
		For each $h^F\in\H^F$, we get
		$$\aligned
		&\|\nabla^FG\|_{\H^F}^2=\sup_{\|h^F\|_{\H^F}\le1}\big\<\nabla^FG, h^F\big\>_{\H^F}^2=\sup_{\|h^F\|_{\H^F}\le1}(D_{h^F}G)^2\\
		=&\sup_{\|h^F\|_{\H^F}\le1}\big\<\nabla G,h^F\big\>_{\H}^2\le\sup_{\|h^F\|_{\H^F}\le1}\|h^F\|_{\H}^2\|\nabla G\|_{\H}^2.
		\endaligned$$
		In the following, we need to estimate $\sup_{\|h^F\|_{\H^F}\le1}\|h^F\|_{\H}^2$.
		Noticing that for any $h^F\in\H^F$, we can find $\dot{h}\in L^2([0,T];\R)$ such that $h^F=K_F\dot{h}$. Thus by equation \eqref{e3.4} we obtain
		$$\big\|h^F\big\|_{\H^F}=\big\|K_F\dot{h}\big\|_{\H^F}=\big\|\dot{h}\big\|_{L^2([0,T];\R)}.$$
		For any $\delta>0$ and real numbers $x,y$, we have
		$$(x+y)^2\leq (1+\delta)x^2+\frac{1+\delta}{\delta}y^2.$$
		Applying the above inequality and \eqref{e3.5}, we have
		$$\aligned
		\big\|K_F\dot{h}\big\|^2_{\H}=&\left\|\frac{\d}{\d t}K_F\dot{h}\right\|^2_{L^2([0,T];\R)}=\left\|\frac{\d}{\d t}\int_0^t F(t,s)\dot{h}(s)\d s\right\|^2_{L^2([0,T];\R)} \\
  		\le&\int^T_0\Big[F(t,t)\dot{h}(t)+\int_0^t\frac{\partial F(t,s)}{\partial t}\dot{h}(s)\d s\Big]^2dt
   		\\
  		\le&(1+\delta)\int^T_0\Big[F(t,t)\dot{h}(t)\Big]^2dt+\frac{1+\delta}{\delta}\int^T_0\Big[\int_0^t\frac{\partial F(t,s)}{\partial t}\dot{h}(s)\d s\Big]^2dt\\
		\le&(1+\delta)\sup_{t\in[0,T]}F(t,t)\|\dot{h}\|^2_{L^2([0,T];\R)}+\frac{1+\delta}{\delta}\left\|\frac{\partial F(t,s)}{\partial t}\right\|^2_{L^2([0,T]^2;\R)}\|\dot{h}\|_{L^2([0,T];\R)}
		\endaligned$$
		Therefore
		\begin{equation}\label{e5.15}
			\aligned
			\sup_{\|h^F\|_{\H^F}\le1}\|h^F\|_{\H}^2&=\sup_{\|\dot{h}\|_{L^2([0,T];\R)}\le1}\|K_F\dot{h}\|^2_{\H}\\
			&\le(1+\delta)\sup_{t\in[0,T]}F(t,t)+\frac{1+\delta}{\delta}\left\|\frac{\partial F(t,s)}{\partial t}\right\|_{L^2([0,T]^2;\R)}.
			\endaligned
		\end{equation}
	\end{proof}
	
	\begin{thm}{\bf [Logarithmic Sobolev inequality($L^2$-case)]}\label{thm5.3}
		Under conditions in Theorem \ref{thm5.2}, we get
		\begin{equation}\label{e5.16}
			\Ent_{\mu^F}(G^2)\le C\E_{L^2}(G,G),
		\end{equation}
		for $G\in\D(\E_{L^2})$, where
		\begin{equation}\label{e5.17}
			C=2\|F\|_{L^2([0,T]^2;\R)}^2=2\int_0^T\int_0^TF(t,s)^2\d s\d t.
		\end{equation}
	\end{thm}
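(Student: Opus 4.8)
The plan is to reduce this to the damped logarithmic Sobolev inequality of Theorem \ref{thm5.2} and then compare the damped gradient $\nabla^F G$ with the $L^2$-gradient $DG$. By Theorem \ref{thm5.2},
$$\Ent_{\mu^F}(G^2)\le 2\E(G,G)=2\int_{W_T}\big\|\nabla^FG\big\|_{\H^F}^2\d\mu^F,$$
so it is enough to prove the pointwise bound $\|\nabla^FG\|_{\H^F}^2\le\|F\|_{L^2([0,T]^2;\R)}^2\,\|DG\|_{L^2}^2$ and then integrate against $\mu^F$.

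Following the template of the O-U case, I would first rewrite the damped norm through the directional derivative, $\|\nabla^FG\|_{\H^F}^2=\sup_{\|h^F\|_{\H^F}\le1}\<\nabla^FG,h^F\>_{\H^F}^2=\sup_{\|h^F\|_{\H^F}\le1}(D_{h^F}G)^2$. The difference from the O-U argument is that now I read $D_{h^F}G$ off through the $L^2$-gradient: since $h^F\in\H^F\subset L^2([0,T];\R)$, the third identity in \eqref{e4.13} gives $D_{h^F}G=\<DG,h^F\>_{L^2([0,T];\R)}$, whence $(D_{h^F}G)^2\le\|h^F\|_{L^2}^2\,\|DG\|_{L^2}^2$ by Cauchy--Schwarz. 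It then remains to control $\sup_{\|h^F\|_{\H^F}\le1}\|h^F\|_{L^2}^2$. Writing $h^F=K_F\dot h$ with $\|h^F\|_{\H^F}=\|\dot h\|_{L^2}$ by \eqref{e3.4}, and invoking the operator bound $\|K_F\dot h\|_{L^2}\le\|F\|_{L^2([0,T]^2;\R)}\|\dot h\|_{L^2}$ recorded just before \eqref{e3.5}, I obtain $\|h^F\|_{L^2}^2\le\|F\|_{L^2([0,T]^2;\R)}^2\|h^F\|_{\H^F}^2$, so that $\sup_{\|h^F\|_{\H^F}\le1}\|h^F\|_{L^2}^2\le\|F\|_{L^2([0,T]^2;\R)}^2$. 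Combining these estimates yields the pointwise bound, and integrating gives $\Ent_{\mu^F}(G^2)\le 2\|F\|_{L^2([0,T]^2;\R)}^2\int_{W_T}\|DG\|_{L^2}^2\d\mu^F=C\,\E_{L^2}(G,G)$ with $C=2\|F\|_{L^2([0,T]^2;\R)}^2$, as claimed.

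The estimate itself is short; the point that needs care is the matching of function classes. Theorem \ref{thm5.2} is stated on $\D(\E)$, generated by $\F C_b^{OU}$, whereas here $G\in\D(\E_{L^2})$ is built from $\F C_b^{L^2}$. I would therefore first establish the inequality for $G$ in a class on which both $\nabla^FG$ and $DG$ are simultaneously available---namely cylinder functions for which the directional derivative $D_hG$ exists for every $h\in L^2([0,T];\R)$, so that both identities in \eqref{e4.13} may be used together---and then pass to general $G\in\D(\E_{L^2})$ by approximation in the $(\E_{L^2})_1^{1/2}$-norm, using the lower semicontinuity of the relative entropy. Verifying that the gradient comparison survives this limiting procedure is the main obstacle I anticipate.
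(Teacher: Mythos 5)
Your proposal is correct and follows essentially the same route as the paper: apply the damped logarithmic Sobolev inequality of Theorem \ref{thm5.2}, write $h^F=K_F\dot h$ with $\|h^F\|_{\H^F}=\|\dot h\|_{L^2}$, pair $D_{h^F}G$ with the $L^2$-gradient via \eqref{e4.13}, and use Cauchy--Schwarz together with the bound $\|K_F\dot h\|_{L^2}\le\|F\|_{L^2([0,T]^2;\R)}\|\dot h\|_{L^2}$ to get $\sup_{\|h^F\|_{\H^F}\le1}\|h^F\|_{L^2}^2\le\|F\|_{L^2([0,T]^2;\R)}^2$. The function-class issue you flag is handled in the paper by the one-line observation that $\F C_b^{L^2}\subset\D(\E_{OU})$ (so the damped inequality applies to such cylinder functions), after which closability of $(\E_{L^2},\F C_b^{L^2})$ gives the extension to $\D(\E_{L^2})$, exactly as you propose.
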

	
	\begin{proof}
		%By Theorem \ref{thm5.2}, we know that
		%\begin{equation}\label{e5.14}
			%\Ent_{\mu^F}(G^2)\le2\E(G,G)=\int_{W_T}\|\nabla^FG\|_{\H^F}^2.
		%\end{equation}
Since $\F C_b^{L^2}\subset\D(\E_{OU})$, \eqref{e5.6} holds for each $G\in \F C_b^{L^2}$. Next,
		this proof is similar to the proof of Theorem \ref{thm5.3}. For the convenience of readers' reading, here we will provide the complete proof. For any $h\in\H^F$, there exists a  $\dot{h}\in L^2([0,T];\R)$ such that $h^F=K_F\dot{h}$, thus we have
		\begin{equation}\label{e5.18}
			\aligned
			\big\|\nabla^FG\big\|_{\H^F}^2=&\sup_{\big\|h^F\big\|_{\H^F}\le1}\big\<\nabla^FG,h^F\big\>_{\H^F}^2=\sup_{\|h^F\|_{\H^F}\le1}\big|D_{h^F}G\big|^2\\
			=&\sup_{\big\|\dot{h}\big\|_{L^2}\le1}\big|D_{K_F\dot{h}}G\big|^2=\sup_{\|\dot{h}\|_{L^2}^2\le1}\big\<DG,K_F\dot{h}\big\>_{L^2([0,T];\R)}|^2\\
			\le&\big\|DG\big\|_{L^2([0,T];\R)}^2\sup_{\|\dot{h}\|_{L^2}\le1}\big\|K_F\dot{h}\big\|_{L^2([0,T];\R)}^2.
			\endaligned
		\end{equation}
		Now we need to estimate $\sup_{\|\dot{h}\|_2\le1}\|K_F\dot{h}\|_{L^2([0,T];\R)}^2$. In fact,
		$$\aligned
		\|K_F\dot{h}\|_2^2=&\int_0^T\left(\int_0^tF(t,s)\dot{h}(s)\d s\right)^2\d t\\
		\le&\int_0^T\left(\int_0^tF(t,s)^2\d s\int_0^t\dot{h}(s)^2\d s\right)\d t\\
		\le&\int_0^T\left(\int_0^TF(t,s)^2\d s\int_0^T\dot{h}(s)^2\d s\right)\d t\\
		=&\left(\int_0^T\int_0^TF(t,s)^2\d s\d t\right)\left(\int_0^T\dot{h}(s)^2\d s\right)\\
		=&\|F\|_{L^2([0,T]^2;\R)}^2\|\dot{h}\|_{L^2([0,T];\R)}^2.
		\endaligned$$
		This implies conclusion holds.
	\end{proof}

	%\color{red}
	\begin{Remark}\iffalse
		$(1)$ Clark first introduced the representation of functionals of standard Brownian motion by stochastic integrals \cite{C}. And it was extended by Karatzas-Ocone-Li \cite{KOL}. Bender-Elliott \cite{BE} and Leon-Nualart \cite{LN} proved the Clark-Ocone formula for fBm with Hurst index bigger than $\frac{1}{2}$ and less than $\frac{1}{2}$, respectively.
		
		$(2)$ \fi When $B^F$ is the fBm, Fan \cite{F2} obtained the Logarithmic Sobolev inequalities for the O-U Dirichlet form.
		
		\iffalse$(3)$ Gross first proved the Logarithmic Sobolev ineqyalities (LSI for short) on the classical Gaussian Wiener measure and finite dimensional manifolds \cite{G}. And also confirmed the correctness of LSI on an important class of infinite dimensional manifolds \cite{G2}. Capitaine-Hsu-Ledoux \cite{CHL} proved the Clark-Ocone theorem and LSI for a certain Riemannian manifold path space. And Hsu \cite{Hsu2} proved LSI for connected, complete Riemannian manifolds with Ricci curvature bounded by a constant.
		
		$(4)$ Aida discussed the Logarithmic Sobolev inequalities (LSI for short) over the compact Riemannian manifolds \cite{A2} and gave a sufficient condition on the logarithmic derivative of the heat kernel under which a LSI on a loop space holds \cite{A}.\fi
	\end{Remark}
	%\color{black}
	
	\section{Bismut-Elworthy-Li's formula }
	In this section, based on the quasi-invariant theorem \ref{thm4.1}, we will establish Bismut-Elworthy-Li's formula with respect to Gaussian Wiener measure.
	
	For each $x\in \R$, define
	$$B_t^{F,x}=B_t^F+x.$$
	%where $F$ satisfies conditions {\bf (A1), (A2)} and {\bf (A4)}. 
Let $\mu$ be the distributions of stochastic processes $B_\cdot^{F,x}$.
	
	\begin{thm}{\bf [Bismut-Elworthy-Li's formula]}\label{thm6.2}
		%For a fixed $x\in\R$. 
Suppose $F$ satisfies conditions {\bf (A1), (A2)} and {\bf (A4)}. Then for each $T>0$ and $g\circ B_T^{F,x}\in L^1(\P)$,
		\begin{equation}\label{e6.1}
			\frac{\d}{\d x}\EE\left(g\big(B_T^{F,x}\big)\right)=\frac{1}{T_0}\EE\left(g\big(B_T^{F,x}\big)\left(\int_0^T(R_F)^{-1}\left(\int_0^tF(t,s)\d s\right)\d B_t^{F,x}\right)\right).
		\end{equation}
		where
		$$T_0:=\int_0^TF(T,s)\d s.$$
	\end{thm}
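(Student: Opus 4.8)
The plan is to express the spatial shift $x$ as a Cameron–Martin shift acting at the terminal time, and then to transfer the $x$-derivative onto the Radon–Nikodym density via the integration by parts formula of Theorem \ref{thm4.3}, so that no derivative of $g$ survives on the right-hand side. First I would fix the shift direction. Let $\mathbf 1$ denote the constant function equal to $1$ on $[0,T]$, which lies in $L^2([0,T];\R)$, and set
$$h^F(t):=\frac{1}{T_0}K_F\mathbf 1(t)=\frac{1}{T_0}\int_0^tF(t,s)\,\d s,\qquad t\in[0,T].$$
Since $\mathbf 1\in L^2([0,T];\R)$, \eqref{e3.3} gives $h^F\in\H^F$, and by the very definition of $T_0$ we have $h^F(T)=\frac{1}{T_0}\int_0^TF(T,s)\,\d s=1$. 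This normalization is the crucial point: it makes a path-shift by $h^F$ act at the endpoint exactly like a shift of $x$.

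Next I would treat bounded Lipschitz $g$ first. Put $G_x(\gamma):=g(\gamma_T+x)$, which is a bounded Lipschitz cylinder function, so $G_x\in\F C_b^{OU}$ and $G_x(B^F)=g(B_T^{F,x})$. Because $G_x$ depends only on the endpoint and $h^F(T)=1$, one has $G_x(B^F+\varepsilon h^F)=g(B_T^F+\varepsilon+x)=G_{x+\varepsilon}(B^F)$, whence
$$\EE\big(D_{h^F}G_x(B^F)\big)=\frac{\d}{\d\varepsilon}\Big|_{\varepsilon=0}\EE\big(G_{x+\varepsilon}(B^F)\big)=\frac{\d}{\d x}\EE\big(g(B_T^{F,x})\big).$$
Applying Theorem \ref{thm4.3} (with $f\equiv1$ and $\hat F=F$) then yields $\EE(D_{h^F}G_x(B^F))=\EE(G_x(B^F)\,\beta_{h^F})$, where $\beta_{h^F}=\int_0^T(R_F)^{-1}h^F(t)\,\d B_t^F$.

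It then remains to identify $\beta_{h^F}$ with the asserted kernel. Using $R_F=K_F\circ K_F^*$ together with the invertibility statements of Theorems \ref{thm3.2} and \ref{thm3.5}, I compute
$$(R_F)^{-1}h^F=\frac{1}{T_0}(K_F^*)^{-1}K_F^{-1}K_F\mathbf 1=\frac{1}{T_0}(K_F^*)^{-1}\mathbf 1,$$
which lies in $\scr H^F=(K_F^*)^{-1}\big(L^2([0,T];\R)\big)$, so the $B^F$-integral defining $\beta_{h^F}$ is legitimate; in fact, by Theorem \ref{thm3.12}, $\beta_{h^F}=\frac{1}{T_0}\int_0^T(K_F^*)^{-1}\mathbf 1\,\d B^F=\frac{1}{T_0}\int_0^T\mathbf 1\,\d B=\frac{1}{T_0}B_T\in L^2(\P)$. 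Since $B_t^{F,x}=B_t^F+x$ differs from $B_t^F$ by a constant, the two processes share the same increments, so $\int_0^T\psi\,\d B_t^{F,x}=\int_0^T\psi\,\d B_t^F$ for every admissible integrand; substituting $(R_F)^{-1}h^F=\frac{1}{T_0}(R_F)^{-1}\big(\int_0^tF(t,s)\,\d s\big)$ into $\EE(G_x(B^F)\beta_{h^F})$ produces exactly \eqref{e6.1} for Lipschitz $g$.

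The main obstacle is the passage from bounded Lipschitz $g$ to a general $g$ with merely $g\circ B_T^{F,x}\in L^1(\P)$, which is the whole content of a Bismut-type identity. Here I would argue by approximation: choose bounded Lipschitz $g_n\to g$ in $L^1$ of the law of $B_T^{F,x}$, rewrite the Lipschitz identity in its integrated form
$$\EE\big(g_n(B_T^{F,x})\big)-\EE\big(g_n(B_T^{F,x_0})\big)=\frac{1}{T_0}\int_{x_0}^x\EE\big(g_n(B_T^F+y)\,B_T\big)\,\d y,$$
and pass to the limit on both sides. The integrand is controlled by the Cauchy–Schwarz inequality using $B_T\in L^2(\P)$, and the explicit nondegenerate Gaussian density of $B_T^F$ makes $y\mapsto\EE(g(B_T^F+y)B_T)$ continuous; differentiating the limiting integrated identity in $x$ then recovers \eqref{e6.1} in full generality. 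The delicate step is justifying this limit and the continuity of the integrand uniformly enough to differentiate, which rests on the Gaussianity of $B_T^F$ and the square-integrability established above.
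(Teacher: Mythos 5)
Your proposal is correct and is essentially the paper's own argument: both proofs hinge on the single observation that the Cameron--Martin direction $h^{F}=\frac{1}{T_0}K_F\mathbf 1$ satisfies $h^F(T)=1$, so that a path shift by $\varepsilon h^F$ moves the endpoint by exactly $\varepsilon$ and hence reproduces the shift in $x$. The only organizational difference is that the paper writes down the Girsanov identity of Corollary \ref{cor4.2} for the shifted process and differentiates the exponential density at $\varepsilon=0$, whereas you invoke Theorem \ref{thm4.3}, which is precisely that differentiation already packaged as an integration by parts formula; the two routes are the same computation. Your version does add two things the paper omits: the explicit simplification $\beta_{h^F}=\frac{1}{T_0}\int_0^T(K_F^*)^{-1}\mathbf 1\,\d B^F=\frac{1}{T_0}B_T$ via Theorem \ref{thm3.12} (which makes the square-integrability of the weight transparent), and an honest approximation step passing from bounded Lipschitz $g$ to general $g$ with $g\circ B_T^{F,x}\in L^1(\P)$ --- the paper simply differentiates under the expectation without justifying the interchange at that level of generality, so your integrated-identity argument fills a genuine (if routine) gap.
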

	
	\begin{proof}
		According to the condition {\bf (A2)}, we  know that $T_0>0$. For each $\varepsilon\geq0$ and $t\geq0$, define
		$$h^{F,\varepsilon}_t:=K_F\Big(\frac{\varepsilon}{T_0}\Big)(t)=\int^t_0F(t,s)\frac{\varepsilon}{T_0}ds=\frac{\varepsilon}{T_0}\int^t_0F(t,s)ds.$$
		Then $h^{F,\varepsilon}\in \H^F$. Denote
		$$B_t^{F,x,\varepsilon}=B_t^{F,x}-h_t^{F,\varepsilon},\quad t\geq0.$$
		Let $\mu^{\varepsilon}$ be the distributions of stochastic processes $B_\cdot^{F,x,\varepsilon}$. According to \ref{cor4.2}, we have
		\begin{equation}\label{e6.2}\aligned
			\frac{\d\mu^{\varepsilon}}{\d\mu}&=\exp\left\{-\int_0^{T_0}(R_F)^{-1}h^{F,\varepsilon}\d B_t^F-\frac{1}{2}\|h^{F,\varepsilon}\|_{\H^F}^2\right\}\\
			&=\exp\left\{-\int_0^{T_0}(R_F)^{-1}h^{F,\varepsilon}\d B_t^F-\frac{\varepsilon^2}{2{T_0}}\right\}.\endaligned
		\end{equation}
		In additional, for any $T_1>0$, the law of  $B_{[0,T_1]}^{F,x+\varepsilon,\varepsilon}$ under the probability measure $\mu^{\varepsilon}$ is the same as $B_{[0,T_1]}^{F,x+\varepsilon}$ under $\mu$, i.e.
		\begin{equation}\label{e6.3}
			\int_\Omega G\big(B_{[0,T_1]}^{F,x+\varepsilon,\varepsilon}\big)\frac{\d\mu^{\varepsilon}}{\d\mu}\d \P=\int_\Omega G\big(B_{[0,T_1]}^{F,x+\varepsilon}\big)\d \P,\quad \forall ~G\in L^1(\mu).
		\end{equation}
		Since
		$$\aligned B_T^{F,x+\varepsilon,\varepsilon}&=B_T^{F,x+\varepsilon}-h_T^{F,\varepsilon}=x+\varepsilon+B_T^{F}-h_T^{F,\varepsilon}\\
		&=x+\varepsilon+B_T^{F}-\frac{\varepsilon}{T_0}\int^T_0F(T,s)ds\\
		&=x+\varepsilon+B_T^{F}-\varepsilon\\
		&=B_T^{F,x},\endaligned$$
		Combing all the above arguments, in particular, taking $G(\gamma)=g(\gamma_T)$, we obtain that
		$$\int_\Omega g\big(B_T^{F,x}\big)\frac{\d\mu^{\varepsilon}}{\d\mu}\d \P=\int_\Omega g\big(B_T^{F,x+\varepsilon,\varepsilon}\big)\frac{\d\mu^{\varepsilon}}{\d\mu}\d \P=\int_\Omega g\big(B_T^{F,x+\varepsilon}\big)\d \P$$
		Thus,
		\begin{equation}\label{e6.4}
			\int_{\Omega}g\big(B_T^{F,x+\varepsilon}\big)\d\P=\int_{\Omega}g\big(B_T^{F,x}\big)\exp\left\{-\int_0^{T_0}(R_F)^{-1}h^{F,\varepsilon}\d B_t^F-\frac{\varepsilon^2}{2{T_0}}\right\}\d\P.
		\end{equation}
		Taking the derivative with respect to $\varepsilon$ on both sides of the above equation, and let $\varepsilon=0$, we immediately get
		\begin{equation}\label{e6.5}
			\frac{\d}{\d x}\EE\left(g(B_T^{F,x})\right)=\frac{1}{T_0}\EE\left(g(B_T^{F,x})\left(\int_0^T(R_F)^{-1}\left(\int_0^tF(t,s)\d s\right)\d B_t^{F,x}\right)\right).
		\end{equation}
	\end{proof}
	
	%\color{red}
	\begin{Remark} %When Bismut \cite{B} first proved the formula for the standard Brownian motion and Elworthy-Li \cite{EL} extended and gave an elementary proof of Bismut\'s formula. 
When $B^F$ is the fractional Brownian motion, Theorem \ref{thm6.2} cover Theorem 3.1 in \cite{F}.
	\end{Remark}
	%\color{black}
	
	\section{Martingales with respect to F-Gaussian Process}
	In general, the $F$-Gaussian process $(B_t^F)_{t\ge0}$ is not a martingale. A natural question: Does  there exists a measurable function $h(t,s)$ such that the stochastic integral
	$$M(t):=\int_0^th(t,s)\d B_s^F$$
	is a martingale? The following theorem \ref{thm7.1} gives the positive answer.
	
	\begin{thm}\label{thm7.1}
		Assume that $F$ satisfies conditions {\bf (A1)} and {\bf (A4)}, then
		\begin{equation}\label{e7.1}
			M(t)=\int_0^t\frac{\Theta(t,s)}{\zeta_2(s)}\d B_s^F
		\end{equation}
		is a martingale with respect to the filtration generated by the $F$-Gaussian process, where $\Theta$ and $\zeta_2$ are coincide with that in the condition {\bf (A4)}.
	\end{thm}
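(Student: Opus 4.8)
The plan is to reduce $M(t)$ to a classical Wiener integral against the underlying Brownian motion $B$, where the martingale property is immediate. The starting point is Theorem \ref{thm3.12}, which converts any $B^F$-integral into a standard It\^o integral. Writing $\psi_t(s):=\frac{\Theta(t,s)}{\zeta_2(s)}1_{[0,t]}(s)$, I would apply \eqref{e3.34} to get $M(t)=\int_0^TK_F^*\psi_t(s)\,\d B_s$, once $\psi_t\in\scr{H}^F$ is verified.

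The heart of the argument is a direct computation of $K_F^*\psi_t$ via the explicit formula \eqref{e3.5}. For $s\le t$ one finds
$$K_F^*\psi_t(s)=F(s,s)\frac{\Theta(t,s)}{\zeta_2(s)}+\int_s^t\frac{\partial F(u,s)}{\partial u}\frac{\Theta(t,u)}{\zeta_2(u)}\,\d u,$$
and this is precisely the left-hand side of the second equation in condition {\bf(A4)} after relabelling $r\to t$, $t\to s$, $s\to u$; hence it equals $\zeta_1(s)$. For $s>t$ the integrand $\psi_t$ vanishes, and since the tail integral in \eqref{e3.5} only involves values $\psi_t(u)$ with $u\ge s>t$, one gets $K_F^*\psi_t(s)=0$. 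Therefore $K_F^*\psi_t=\zeta_1\,1_{[0,t]}$, which lies in $L^2([0,T];\R)$ because $\zeta_1\in L^2$ by {\bf(A4)}; in particular $\psi_t=(K_F^*)^{-1}(\zeta_1\,1_{[0,t]})\in\scr{H}^F$ by Theorem \ref{thm3.5}, so the integral above is legitimately defined and the use of Theorem \ref{thm3.12} is justified.

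Combining these steps yields $M(t)=\int_0^T\zeta_1(s)1_{[0,t]}(s)\,\d B_s=\int_0^t\zeta_1(s)\,\d B_s$, a Wiener integral with deterministic integrand $\zeta_1\in L^2([0,T];\R)$, hence a square-integrable $\F_t$-martingale. Since $\F_t^F=\F_t$ (the Brownian filtration, as recorded before Theorem \ref{thm5.1}), the standard martingale property of the It\^o integral gives, for $u\le t$,
$$\EE\big[M(t)\mid\F_u^F\big]=\int_0^u\zeta_1(s)\,\d B_s=M(u),$$
which is the desired conclusion.

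The main obstacle---indeed the only nontrivial step---is recognizing that the kernel $\Theta(t,\cdot)/\zeta_2(\cdot)$ is engineered so that applying $K_F^*$ collapses it exactly to the deterministic function $\zeta_1$; this algebraic collapse is the entire content of the second identity in {\bf(A4)}. Once it is observed, the martingale property is inherited for free from the Brownian integral. The remaining points to check are purely technical: the measurability and integrability needed to invoke Theorem \ref{thm3.12}, and the identification $\psi_t\in\scr{H}^F$, both of which follow from $\zeta_1,\tfrac1{\zeta_1},\tfrac1{\zeta_2}\in L^2$ together with the continuity assumed in {\bf(A4)} and condition {\bf(A1)}.
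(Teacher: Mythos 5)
Your proposal is correct and follows essentially the same route as the paper: convert the $B^F$-integral to an It\^o integral via Theorem \ref{thm3.12}, compute $K_F^*\bigl(\tfrac{\Theta(t,\cdot)}{\zeta_2(\cdot)}1_{[0,t]}\bigr)$ from \eqref{e3.5}, and use the second identity in {\bf(A4)} to collapse it to $\zeta_1 1_{[0,t]}$, so that $M(t)=\int_0^t\zeta_1(s)\,\d B_s$ is a Wiener integral and hence a martingale. Your explicit verification that $\psi_t\in\scr{H}^F$ (so that Theorem \ref{thm3.12} applies) is a point the paper leaves implicit, but the argument is otherwise identical.
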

	
	\begin{proof}
		According to \eqref{e7.1} and \eqref{e3.34} we get
		$$\aligned
		M(t)=&\int_0^TK_F^*\left(\frac{\Theta(t,s)}{\zeta_2(s)}1_{[0,t]}(s)\right)\d B_s\\
		=&\int_0^T\left(F(s,s)\frac{\Theta(t,s)}{\zeta_2(s)}1_{[0,t]}(s)+\int_s^T\frac{\partial F(u,s)}{\partial u}\frac{\Theta(t,u)}{\zeta_2(u)}1_{[0,t]}(u)\d u\right)\d B_s\\
		=&\int_0^t\left(F(s,s)\frac{\Theta(t,s)}{\zeta_2(s)}+\int_s^t\frac{\partial F(u,s)}{\partial u}\frac{\Theta(t,u)}{\zeta_2(u)}\d u\right)\d B_s.
		\endaligned$$
		By the cndition {\bf (A4)}, we know
		\begin{equation}\label{e7.2}
			F(s,s)\frac{\Theta(t,s)}{\zeta_2(s)}+\int_s^t\frac{\partial F(u,s)}{\partial u}\frac{\Theta(t,u)}{\zeta_2(u)}\d u:=\zeta_1(s)
		\end{equation}
		only depend on the variable $s$ and is $L^2$-integrable. This implies that
		$$M(t):=\int_0^t\zeta_1(s)\d B_s$$
		 is a martingale.
	\end{proof}

	\begin{exa}\label{ex7.2}
		$(1)$ When $F(t,s)=K_H(t,s)$.
		Then
		$$M(t)=\int_0^t(t-s)^{\frac{1}{2}-H}s^{\frac{1}{2}-H}\d B_s^{H}$$
		is a martingale.
		
		$(2)$ When
		$$F(t,s)=\frac{1}{\Gamma(\alpha)}f_1(s)\int_s^t(u-s)^{\alpha-1}f_2(u)\d u,$$
		Then
		$$M(t)=\int_0^t(t-s)^{-\alpha}f_2(s)^{-1}\d B_s^F$$
		is a martingale.
		
		$(3)$ When
		$$F(t,s)=f(s),$$
		then for any $\zeta(s)\in L^2([0,T])$,
		$$M(t)=\int_0^t\zeta(s)\d B_s^F$$
		is a martingale. In fact, $F$ is independent with respect to $t$, hence
		$$K_F^*\zeta(s)=f(s)\zeta(s)$$
		is independent with respect to $t$.
		\begin{proof} We only prove $(2)$. By the proof Corollary \ref{cor3.7}, we know that $F$ satisfies {\bf (A1)} and {\bf (A4)}.  we obtain easily that
			$$\Theta(t,s)=(t-s)^{-\alpha},\quad\zeta_2(s)=f_2(s),\quad\zeta_1(s)=\Gamma(1-\alpha)f_1(s).$$
 		Hence conclusions are from Theorem \ref{thm7.1}.
		\end{proof}
	\end{exa}

	\begin{Remark}\label{rem7.3}
		When $F(t,s)=K_H(t,s)$, Norros-Valkeila-Virtamo \cite{NVV} proved that $M(t)$ is a martingale by using the covariance of $M(s),M(t)$, and Hu-Nualart-Song \cite{HDJ} used $M(t)$ to prove some properties of fractional martingales.
	\end{Remark}

	\section{Applications to Mathematical Finance}
	In this section, we will apply the techiques developed in previous sections to Mathematical finance. It is well known that in financial derivatives pricing theory,
	we use geometric Brownian motion to model stock processes. Let $S_{t}$ be the stock process and satisfies the following stochastic differential equation
	\begin{equation}\label{e8.1}
		\d S_{t}=\mu S_{t}\d t+\sigma S_{t}\d B_{t},\quad S_0=s\in \R.
	\end{equation}
	where $B_{t}$ is the standard Brownian motion and $\mu \in \mathbb{R}$ is the drift coefficient and $\sigma >0$ is the diffusion
	coefficient also called volatility, both are constants. By It\^{o} formula, we know that it %for a stochastic process $S_{t}$ starting from $S_{0}$ satisfying \ref{e8.1}
	has a solution
	\begin{equation}\label{e8.2}
		S_{t}=S_{0}\exp\bigg\{\Big(\mu-\frac{1}{2}\sigma^2\Big)t^2+\sigma B_{t}\bigg\}.
	\end{equation}
	Furthermore we consider the European call option with payoff function
	\begin{equation}\label{e8.3}
		P(S_{T})=\max(S_{T}-K,0)
	\end{equation}
	where $S_{T}$ is the stochastic process $S_{t}$ at time $T$ and constant $K\in \mathbb{R}$ is the strike price. By the Black-Scholes theory, to price the derivative, we have to use the risk neutral measure and  work with risk free interest rate $r$. Then we can price this derivative by the following formula
	\begin{equation}\label{e8.4}
		V(0,S_{0},K)=S_{0}N(d_{1})e^{-rT}-K N(d_{2}),
	\end{equation}
	where
	$$N(x)=\frac{1}{\sqrt{2\pi}}\int^x_{-\infty}e^{-\frac{t^2}{2}d t}$$
	and	$$\aligned
	d_{1}&=\frac{\log\frac{S_{0}}{K}+rT+\frac{1}{2}\sigma^2T}{\sigma\sqrt{T}}\\
	d_{2}&=\frac{\log\frac{S_{0}}{K}+rT-\frac{1}{2}\sigma^2T}{\sigma\sqrt{T}}.
	\endaligned$$
	However, derivative pricing under this assumption does not match the observations from  market data. In option market, we find that option with strike price $K$ has an implied volatility $\sigma$ depends on $K$ where  in our model, the volatility $\sigma$ does not depend on the strike price $K$.  This dependence can be characterized as a curve called volatility skew or volatility smile. Therefore new models are being called to explain this phenomina. One of the models is the stochastic volatility model. In the stochastic volatility model, instead of using one source of uncertainty $B_{t}^{(1)}$ we introduce another source of uncertainty $B_{t}^{(2)}$
	$$\aligned
	\d S_{t}&=rS_{t}+\sqrt{v_{t}}S_{t}\d B_{t}^{(1)},\\
	\d v_{t}&=a(v_{t})dt+b(v_{t})\d B_{t}^{(2)},
	\endaligned$$
	where $a,b$ are two functions to characterize the variance process of $v_t$, $B_{t}^{(1)},B_{t}^{(2)}$ are two Brownian motions with certain correlation.
	
	Besides the traditional Brownian motion, recently people starts to use Frational Brownian motion as the driver for $v_{t}$, namely rough volatility models. In the rough volatility model, they use fractional Brownian motion to characterize variance
	\begin{equation}\label{e8.5}
		v_{t}=v_0\exp\left(B^{H}_{t}\right),
	\end{equation}
	where $B^{H}$ is the fractional Brownian motion with respect to Hurst index of $H$, $v_{t}$ starts from $v_{0}$.
	
	Using the technique developed in this paper, we can further extend the above model
	\begin{equation}\label{e8.6}
		v_{t}=v_0\exp\left(B^{F}_{t}\right),
	\end{equation}
	where
	\begin{equation}\label{e8.7}
		B^{F}_{t}=\int_{0}^{t}F(t,s)\d B_{s}.
	\end{equation}
	Here $F(t,s)$ is one of the permissible kernels defined in previous sections. We now list some of the properties of the variance processes.

	\begin{thm}\label{thm8.1}
		The variance process defined in \eqref{e8.6} has mean
		\begin{equation}\label{e8.8}
			E(v_{t})=\exp\left\{\frac{1}{2}\int_{0}^{t} F^2(t,s)\d s\right\}
		\end{equation}
		and has Malliavin Derivative
		\begin{equation}\label{e8.9}
			D_{s}v_{t}=v_{t}F(t,s).
		\end{equation}
		And further more we have
		\begin{equation}\label{e8.10}
			D_{s}\left(\frac{1}{v_{t}}\right)=-\frac{1}{v_{t}}F(t,s)
		\end{equation}
	\end{thm}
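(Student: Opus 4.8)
The plan is to derive all three identities from the Gaussianity of $B_t^F$ together with the elementary rules of Malliavin calculus on the underlying Wiener space. The key structural observation I would record first is that, by \eqref{e2.1}, $B_t^F=\int_0^t F(t,s)\,\d B_s=\int_0^T F(t,s)1_{[0,t]}(s)\,\d B_s$ is a first–order Wiener integral of the deterministic kernel $s\mapsto F(t,s)1_{[0,t]}(s)\in L^2([0,T];\R)$. Consequently $B_t^F$ is a centered Gaussian variable whose variance is $\int_0^t F(t,s)^2\,\d s=R_F(t,t)$ by \eqref{e2.4} (taking $s=t$ there). This single fact underlies everything that follows.

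For the mean \eqref{e8.8} I would simply invoke the Laplace transform of a centered Gaussian. Since $B_t^F\sim N\!\big(0,R_F(t,t)\big)$, one has $\EE\big(\exp(B_t^F)\big)=\exp\{\tfrac12 R_F(t,t)\}=\exp\{\tfrac12\int_0^t F(t,s)^2\,\d s\}$, and therefore $\EE(v_t)=v_0\exp\{\tfrac12\int_0^t F(t,s)^2\,\d s\}$, which is the asserted formula under the normalization $v_0=1$.

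For the Malliavin derivative \eqref{e8.9} the first step is to compute $D_sB_t^F$. Because $B_t^F$ is the Wiener integral of the deterministic kernel $F(t,\cdot)1_{[0,t]}(\cdot)$, the rule for differentiating a first–order Wiener integral gives $D_sB_t^F=F(t,s)1_{[0,t]}(s)$. The second step is to apply the chain rule for the Malliavin derivative to the smooth map $x\mapsto v_0 e^x$: since $v_t=v_0\exp(B_t^F)$ we obtain $D_sv_t=v_0\exp(B_t^F)\,D_sB_t^F=v_tF(t,s)$ for $s\le t$ (and $0$ for $s>t$), which is \eqref{e8.9}. For \eqref{e8.10} I would apply the same chain rule to $y\mapsto 1/y$ evaluated at $y=v_t$, giving $D_s(1/v_t)=-v_t^{-2}\,D_sv_t=-v_t^{-2}\cdot v_tF(t,s)=-v_t^{-1}F(t,s)$.

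The computations are entirely routine, so there is no genuine obstacle; the only points that require a word of care are the presence of the indicator $1_{[0,t]}(s)$, so that the derivative identities are understood to hold for $s\le t$ and to vanish for $s>t$, and the verification that $v_t$ and $1/v_t$ actually belong to the domain of the Malliavin derivative operator before the chain rule is applied. The latter follows from the exponential integrability of Gaussian variables, which guarantees that $v_t$ and $1/v_t$ lie in every $L^p(\P)$ and are smooth in the Malliavin sense.
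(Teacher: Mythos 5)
Your proposal is correct; the three identities do follow exactly as you describe, and you are right to flag the factor $v_0$ that the paper's displayed formula \eqref{e8.8} silently drops. The route differs from the paper's in two small but identifiable ways. For the mean, you invoke the Laplace transform of the centered Gaussian $B_t^F\sim N(0,R_F(t,t))$ directly, whereas the paper introduces the martingale $M_u=\int_0^u F(t,s)\,\d B_s$ (with $t$ frozen) and uses that its exponential martingale $\exp\{M_u-\tfrac12\int_0^u F^2(t,s)\,\d s\}$ has expectation $1$, evaluating at $u=t$; the two arguments are computationally identical, but yours is shorter and the paper's makes the normalization $\EE(e^{X-\operatorname{Var}(X)/2})=1$ explicit without appealing to the law of $B_t^F$. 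For the derivatives, you cite the standard rules $D_sB_t^F=F(t,s)1_{[0,t]}(s)$ and the Malliavin chain rule, while the paper works from the paper's own Definition \ref{def4.5}: it computes the directional derivative $D_h v_t$ as a limit of difference quotients along $h\in\H$, obtains $D_hv_t=\exp(B_t^F)\int_0^tF(t,s)\dot h(s)\,\d s$, and then reads off $D_s v_t$ by duality against $\dot h$. The paper's version is more self-contained (it never needs to assert that $v_t$ lies in the domain of the Malliavin derivative operator of the general theory), while yours is cleaner if one is willing to import the standard Malliavin calculus toolbox; your closing remark about exponential integrability is exactly the justification needed to make the chain-rule route rigorous, and it applies equally to $1/v_t=v_0^{-1}\exp(-B_t^F)$, which is again lognormal.
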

	
	\begin{proof}
		For each fixed $t>0$, consider the stochastic process
		$$M_u:=\int_0^uF(t,s)\d B_s, \quad 0\le u\le t.$$
		Then $\{M_u:0\le u\le t\}$ is a martingale under the filtraction generated by standard Brownian process $\{B_u:0\le u\le t\}$. Then
		$$\exp\left\{\int_0^uF(t,s)\d B_s-\frac{1}{2}\int_0^uF^2(t,s)\d s\right\},\quad 0\le u\le t$$
		is the exponential martingale of $\{M_u:0\le u\le t\}$. Therefore for any $0\le u\le t$,
		$$\EE\left(\exp\left\{\int_0^uF(t,s)\d B_s-\frac{1}{2}\int_0^uF^2(t,s)\d s\right\}\right)=1.$$
		In particular, when $t=u$, that is
		$$\EE(v_t)=\exp\left\{\frac{1}{2}\int_0^tF^2(t,s)\d s\right\}.$$
		For the Malliavin Derivative, for any $h\in\H$, we have
		$$\aligned
		D_h(v_t)=&\frac{\d}{\d\varepsilon}\bigg|_{\varepsilon=0}\exp\left(\int_0^tF(t,s)\d(B_s+\varepsilon h_s)\right)\\
		=&\lim\limits_{\varepsilon\rightarrow0}\frac{1}{\varepsilon}\left(\exp\Big(B_t^F+\varepsilon\int_0^tF(t,s)\dot{h}(s)\d s\Big)-\exp(B_t^F)\right)\\
		=&\exp(B_t^F)\int_0^tF(t,s)\dot{h}(s)\d s\\
		=&\int_0^tv_tF(t,s)\dot{h}(s)\d s=\int_0^tD_s(v_t)\dot{h}(s)\d s.
		\endaligned$$
		That is
		$$D_s(v_t)=v_tF(t,s).$$
		Similarly, we have
		$$\aligned
		D_h\bigg(\frac{1}{v_t}\bigg)=&\frac{\d}{\d\varepsilon}\exp\left(-\int_0^tF(t,s)\d(B_s+h_s)\right)
			%\\=&\lim\limits_{\varepsilon\rightarrow0}\frac{1}{\varepsilon}\left(\exp(-B_t^F-\int_0^tF(t,s)\dot{h}(s)\d s)-\exp(B_t^F)\right)\\
		%=&-\exp(-B_t^F)\int_0^tF(t,s)\dot{h}(s)\d s
		\\=&-\int_0^t\frac{1}{v_t}F(t,s)\dot{h}(s)\d s=\int_0^tD_s\Big(\frac{1}{v_t}\Big)\dot{h}(s)\d s.
		\endaligned$$
		That is
		$$D_s\Big(\frac{1}{v_t}\Big)=-\frac{1}{v_t}F(t,s).$$
		Thus we have proved equations \eqref{e8.8}, \eqref{e8.9} and \eqref{e8.10}.
	\end{proof}

	Consider now a derivative on variance whose payoff is $f(v_{t})$. In finance the function $f$ is usually a continuous function with reasonable smoothness. For example in the case of standard call option, the function $f$ takes the following form
	\begin{equation*}
  		f(x) = \max(c - x, 0), c\in \mathbb{R}
	\end{equation*}
	We now want to price its value at time $0$. Let it's value be called
	\begin{equation}\label{e8.11}
		P(v_{0})=\EE\left(f(v_{t})\right),
	\end{equation}
	here we want to emphasis the dependence of the function $P$ on the $v_{0}$. When $f\in C^1_b$, its derivative with respect to initial value $v_{0}$ can be calculated directly as
	\begin{equation}\label{e8.12}
		\frac{\d P}{\d v_{0}}=\EE\left(f'(v_{t})\frac{v_{t}}{v_{0}}\right).
	\end{equation}
	By using integration by parts formula and Theorem \ref{thm8.1} we have
	\begin{equation}\label{e8.13}
		Df(v_{t})=f'(v_{t})Dv_{t}=f'(v_{t})v_{t}F(t,s),
	\end{equation}
	hence we have
	\begin{equation}\label{e8.14}
		f'(v_{t})v_{t}=Df(v_{t})\frac{1}{F(t,s)}.
	\end{equation}
	Usian the integration by parts formula again, we get
	\begin{align*}
		\EE\left(f'(v_{t})v_{t}\right)&=\EE(Df(v_{t})\frac{1}{F(t,s)})\\
		&=\EE\left(f(v_{t})\int_{0}^{t}\frac{1}{F(t,s)}\d B_{s}\right)\\
		&=\EE\left(f(v_{t})B^{1/F}_{t}\right)
	\end{align*}

	For general case, when $f$ is not a $C^1_b$ function, we can use the previous Bismut's formula and can still get a nice expression of the expectation which is friendly to numerical computation.
	\begin{thm}\label{thm8.2}
		For each fixed $t>0$, we have
		\begin{equation}\label{e8.15}
		\frac{\d P}{\d v_{0}}=\frac{1}{v_{0}}\EE\big[f(v_{t})B^{1/F}_{t}\big]
		\end{equation}
		for any function $f$ satisfying with  $f\circ \exp (B^{F,x}_{t})\in L^1(\P).$
	\end{thm}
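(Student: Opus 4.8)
The plan is to reduce the statement to the Bismut--Elworthy--Li formula of Theorem \ref{thm6.2} by absorbing the initial value $v_0$ into a constant shift of the driver. Setting $x=\log v_0$, one has $v_t=v_0\exp(B_t^F)=\exp(B_t^F+x)=\exp\big(B_t^{F,x}\big)$, so that with $g:=f\circ\exp$ the price rewrites as $P(v_0)=\EE\big(g(B_t^{F,x})\big)$. As $v_0\mapsto x=\log v_0$ is smooth with $\frac{\d x}{\d v_0}=\frac{1}{v_0}$, the chain rule yields
$$\frac{\d P}{\d v_0}=\frac{1}{v_0}\,\frac{\d}{\d x}\,\EE\big(g(B_t^{F,x})\big),$$
and everything reduces to the $x$-derivative on the right-hand side.

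First I would apply Theorem \ref{thm6.2} with the horizon $T$ replaced by $t$ and with the payoff $g=f\circ\exp$. Its sole hypothesis is $g(B_t^{F,x})=f(v_t)\in L^1(\P)$, which is precisely the assumption $f\circ\exp(B_t^{F,x})\in L^1(\P)$; the decisive feature is that this route never differentiates $f$, so it covers the non-smooth payoffs (e.g.\ $f(x)=\max(c-x,0)$) for which the integration-by-parts argument preceding the statement is not available. This gives
$$\frac{\d}{\d x}\EE\big(g(B_t^{F,x})\big)=\frac{1}{t_0}\,\EE\!\left(f(v_t)\int_0^t(R_F)^{-1}\Big(\int_0^uF(u,s)\,\d s\Big)\,\d B_u^{F,x}\right),\qquad t_0:=\int_0^tF(t,s)\,\d s.$$

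The next step is to bring the Malliavin weight into the announced form $B_t^{1/F}$. Using $R_F=K_F\circ K_F^*$, hence $(R_F)^{-1}=(K_F^*)^{-1}\circ K_F^{-1}$ on $\H^F$, together with the identity $\int_0^uF(u,s)\,\d s=K_F\mathbf{1}(u)$, the argument of the weight collapses to $(K_F^*)^{-1}\mathbf{1}$. Converting the $B^F$-integral into an ordinary It\^{o}/Wiener integral by Theorem \ref{thm3.12} --- and noting $\d B^{F,x}=\d B^F$ since the shift is a constant --- turns the weight into an explicit deterministic Wiener integral, which I would then match, up to the normalization carried by $t_0$, with the integral $B_t^{1/F}=\int_0^tF(t,s)^{-1}\d B_s$ produced by the $C^1_b$ integration-by-parts computation just above the theorem. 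Substituting back produces $\frac{\d P}{\d v_0}=\frac{1}{v_0}\EE\big(f(v_t)B_t^{1/F}\big)$.

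The hard part will be this weight identification: one has to invoke the inversion formulas of Theorems \ref{thm3.2} and \ref{thm3.5} for $K_F^{-1}$ and $(K_F^*)^{-1}$, use the operator compatibility of Theorem \ref{thm3.12}, and track carefully the constant $t_0$ so that it cancels correctly against the normalization of $B_t^{1/F}$. The remaining, more routine difficulty is analytic: since $f$ is only $L^1$, differentiation under the expectation cannot be taken for granted, so I would justify it exactly as in the proof of Theorem \ref{thm6.2}, exploiting that the Radon--Nikodym densities $\alpha_{-\varepsilon h^F}$ of Corollary \ref{cor4.2} form an $L^2$-bounded martingale family, and then approximate a general admissible $f$ by $C^1_b$ functions, passing to the limit on both sides by dominated convergence after observing $B_t^{1/F}\in L^2(\P)$.
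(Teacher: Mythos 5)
Your overall strategy --- substituting $x=\log v_0$, applying the chain rule, and invoking Theorem \ref{thm6.2} with $g=f\circ\exp$ --- is exactly the paper's route; the paper's own proof consists of precisely these lines followed by the bare assertion that the resulting weight is $B_t^{1/F}$. The difference is that you try to actually carry out the weight identification, and the route you describe for it does not close. Following your own simplification, the argument of the weight in Theorem \ref{thm6.2} is $(R_F)^{-1}K_F\mathbf{1}=(K_F^*)^{-1}\mathbf{1}$, and Theorem \ref{thm3.12} converts the $B^F$-integral into $\int_0^t K_F^*(K_F^*)^{-1}\mathbf{1}\,\d B_s=B_t$. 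So Theorem \ref{thm6.2} delivers the weight $B_t/t_0$ with $t_0=\int_0^tF(t,s)\,\d s$, which is a genuinely different random variable from $B_t^{1/F}=\int_0^tF(t,s)^{-1}\,\d B_s$ (compare variances: $t/t_0^2$ versus $\int_0^tF(t,s)^{-2}\,\d s$, which coincide only when $F(t,\cdot)$ is constant in $s$). No amount of tracking the constant $t_0$ will turn one into the other; the two weights only produce the same expectation against $f(v_t)$ because each is a valid Bismut weight with its own normalization.

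To land on the weight $B_t^{1/F}$ one must rerun the Cameron--Martin shift of Theorem \ref{thm6.2} with a different direction: take $h^{F,\varepsilon}=K_F\big(\tfrac{\varepsilon}{t}F(t,\cdot)^{-1}\big)$ (assuming $F(t,\cdot)^{-1}\in L^2([0,t])$), which still satisfies $h^{F,\varepsilon}_t=\varepsilon$, and whose Girsanov exponent differentiates at $\varepsilon=0$ to $\tfrac1t\int_0^t(K_F^*)^{-1}\big(F(t,\cdot)^{-1}\big)\,\d B^F=\tfrac1t B_t^{1/F}$. This yields $\frac{\d}{\d x}\EE\big[g(B_t^{F,x})\big]=\tfrac1t\EE\big[g(B_t^{F,x})B_t^{1/F}\big]$, i.e.\ the prefactor should be $1/(tv_0)$ rather than $1/v_0$ --- the same factor of $t$ that is silently dropped in the duality computation $\EE\big\langle Df(v_t),F(t,\cdot)^{-1}\big\rangle_{L^2([0,t])}=t\,\EE[f'(v_t)v_t]$ preceding the theorem. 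So the step you defer as ``the hard part'' is a genuine gap, present in the paper as well: either keep the weight $B_t/t_0$ produced by Theorem \ref{thm6.2}, or switch to $B_t^{1/F}$ and accept the normalization $1/(tv_0)$. Your remaining analytic points (differentiating under the expectation via the $L^2$-martingale property of the densities, approximating $f$) are sound and consistent with how the paper implicitly relies on Theorem \ref{thm6.2} for them.
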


	\begin{proof} %Taking $G(\gamma)=f\circ \exp (\gamma_t), \gamma\in W_T$, then the condition of $f$ implies $G\in L^1(\mu)$.
		According to \eqref{e8.6} and Theorem \ref{thm6.2}
		\begin{align*}\label{e8.15}
			\frac{\d P}{\d v_{0}}&=\frac{\d }{\d v_{0}}\EE\Big(f\Big(v_0\exp\left(B^{F}_{t}\right)\Big)\Big)=\frac{\d }{\d v_{0}}\EE\Big[ f\Big(e^{\log v_{0} +B_{t}^{F}}\Big)\Big] \\
             & = \frac{1}{v_{0}} \frac{\d}{\d x} \EE\Big[  f(e^{x +B_{t}^{F}} )\Big]~~\Big(\text{here}~ x=\log v_0\Big)\\
             & =\frac{1}{v_{0}} \EE\Big[  f(v_{t}) B^{1/F}_{t}\Big]\\
             & =\frac{1}{v_{0}}\EE\big[f(v_{t})B^{1/F}_{t}\big].
		\end{align*}
	\end{proof}
	This proof can reduce the smoothness requirements for function $f$ which is essential for a lot of application in finance. For example, the plain vanilla call, put, digital or barrier options all have payoff functions fall into this category.
	\newpage
	
	\iffalse	\appendix
	\section{Appendix}
	\begin{lem}\label{A.1}
		Suppose that $\xi\in C[0,T]$. Then for any $h\in L^2[0,T]$, the integral function
		$$H(t):=\int_0^t\xi(s)h(s)\d s$$
		has a weak derivative:
		$$\frac{\d}{\d t}H(t)=\xi(t)h(t)------L^2-\text{sense}.$$
	\end{lem}
	
	\begin{proof}
		First we prove that for any $a\in[0,T]$, $h(t)=1_{[0,a]}(t)$ satisfies the conclusion. In fact,
		$$H(t)=\left\{\begin{array}{ll}
			\int_0^t\xi(s)\d s, ~~~&0\le t\le a\\
			\int_0^a\xi(s)\d s,~~&a<t\le T
		\end{array}\right.$$
		$H(t)$ is not differentiable only at point $t=a$. Since $\xi(t)$ is continuous, then except point $t=a$, the derivative of $H$ has the following representation:
		$$\frac{\d}{\d t}H(t)=\left\{\begin{array}{ll}
			\xi(t),~~~&t\in[0,a)\\
			0,~~~~&t\in(a,t]
		\end{array}\right.$$
		If we supplementally define $\left.\frac{\d}{\d t}H(t)\right|_{t=a}=\xi(a)$. Then the divergence of $H(t)$ has a uniform expression
		$$\frac{\d}{\d t}H(t)=\xi(t)1_{[0,a]}(t).$$
		The equation above holds in $L^2$ sense. By the linear proposition of $\frac{\d}{\d t}$, we can confirm that the statement is true for all step functions defined on $[0,T]$. Finally, for any $h(t)\in L^2[0,T]$, we can find a sequence step functions $h_n(t)$ such that
		$$\lim\limits_{n\rightarrow\infty}\|h-h_n\|_{L^2[0,T]}^2=\lim\limits_{n\rightarrow\infty}\int_0^T(h(s)-h_n(s))^2\d s=0.$$
		Therefore the operator $\frac{\d}{\d t}$ can also expand to all correspond $H(t)$ in $L^2$ sense.
	\end{proof}

	\newpage\fi

\end{document}